\documentclass[11pt]{article}
\usepackage{amsmath,amsfonts,amssymb,amsthm,mathrsfs}
\usepackage[a4paper,vmargin={3.5cm,3.5cm},hmargin={2.5cm,2.5cm}]{geometry}
\usepackage[font=sf, labelfont={sf,bf}, margin=1cm]{caption}
\usepackage{graphicx,graphics}
\usepackage{epsfig}
\usepackage{latexsym}
\usepackage[applemac]{inputenc}
\usepackage{ae,aecompl}
\usepackage[english]{babel}
\usepackage[colorlinks=true]{hyperref}
\usepackage[toc,page]{appendix}
\usepackage{bbm}

\date{}

\headheight=-0cm
\oddsidemargin=-0.8cm
\evensidemargin=0cm
\textwidth=17.6cm
\textheight=22.6cm
\marginparsep=0cm
\marginparwidth=0cm
\topskip=0cm
\pagestyle{plain}
\topmargin=-0.3cm

\newtheorem{theo}{Theorem}[section]
\newtheorem{prop}[theo]{Proposition}
\newtheorem{cor}[theo]{Corollary}

\newtheorem{lemma}[theo]{Lemma}

\numberwithin{equation}{section}
\newcommand{\e}{{\mathrm e}}
\newcommand{\R}{{\mathbb{R}}}
\newcommand{\T}{\mathcal{T}}

\newcommand{\s}{\mathcal{S}}

\newcommand{\N}{\mathbb{N}}

\newcommand{\Z}{\mathbb{Z}}

\title{\bf \textsc{Scaling limits of k-ary growing trees}}

\author{\text{B\'en\'edicte  Haas}\thanks{ Universit\'e Paris-Dauphine and \'Ecole normale supérieure, E-mail: haas@ceremade.dauphine.fr} \ \ \& \text{Robin Stephenson}\thanks{Universit\'e Paris-Dauphine, E-mail: stephens@phare.normalesup.org}   }

\begin{document}

\maketitle

\abstract{For each integer $k \geq 2$, we introduce a sequence of $k$-ary discrete trees constructed recursively by choosing at each step an edge uniformly among the present edges and grafting on ``its middle" $k-1$ new edges. When $k=2$, this corresponds to a well-known algorithm which was first introduced by Rémy. Our main result concerns the asymptotic behavior of these trees as $n$ becomes large: for all $k$, the sequence of $k$-ary trees grows at speed $n^{1/k}$ towards a $k$-ary random real tree that belongs to the family of self-similar fragmentation trees. This convergence is proved with respect to the Gromov-Hausdorff-Prokhorov topology. We also study embeddings of the limiting trees when $k$ varies.}

\medskip

\noindent \emph{\textbf{Keywords:} random growing trees, scaling limits, self-similar fragmentation trees, Gromov-Hausdorff-Prokhorov topology}

\medskip

\noindent \emph{\textbf{AMS subject classifications:}} 60F17, 60J80

\section{Introduction}

\noindent \textbf{The model.} Let $k \geq 2$ be an integer. We introduce a growing sequence of $k$-ary trees $(T_n(k),n \geq 0)$, where $T_n(k)$ is a rooted tree with $(k-1)n+1$ leaves, constructed recursively as follows:
\begin{enumerate}
\item[$\bullet$] \textsc{Step 0}: $T_0(k)$ is the tree with one edge and two vertices: one root, one leaf.
\item[$\bullet$] \textsc{Step} $n$: given  $T_{n-1}(k)$, choose uniformly at random one of its edges and graft on ``its middle" $(k-1)$ new edges, that is split  the selected edge into two so as to obtain two edges separated by a new vertex, and then add $k-1$ new edges to the new vertex.
\end{enumerate}

\begin{figure}[ht]
\centering

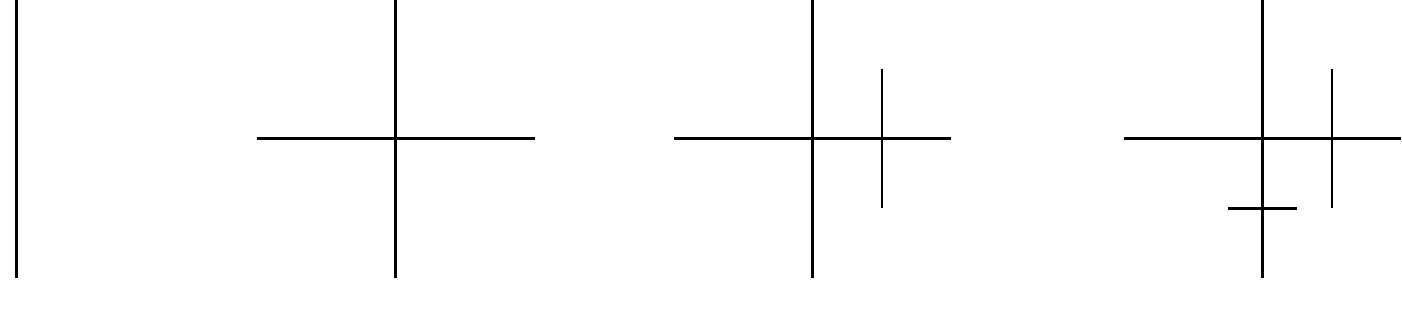
\caption{A representation of $T_n(3)$ for $n=0,1,2,3$. The edges are also labelled as explained in the paragraph just above Theorem \ref{thjoint}.}
\label{Fig1}
\end{figure}

For all $n$, this gives a tree $T_n(k)$ with indeed $(k-1)n+1$ leaves, $n$ internal nodes and $kn+1$ edges.
In the case where $k=2$, edges are added one by one and our model corresponds to an algorithm introduced by R\'emy \cite{Rem85} to generate trees uniformly distributed among the set of binary trees with $n+1$ labelled leaves. Many other dynamical models of trees growing by adding edges one by one exist in the literature, see e.g. \cite{SmyMah94,Bha07,RTV07,Ford05,CFW09}.  

\bigskip

\noindent \textbf{Scaling limits.} We are interested in the description of the metric structure of the growing tree $T_n(k)$ as $n$ becomes large.  For $k=2$, it is easy to explicitly compute the distribution of $T_n(2)$ (see e.g. \cite{Mar06}), which turns out to be that of  a (planted) binary critical Galton-Watson tree conditioned to have $2n+2$ nodes (after forgetting the order). According to the work of Aldous  on scaling limits of Galton-Watson trees \cite{Ald93}, the tree $T_n(2)$ then grows at speed $n^{1/2}$ towards a multiple of the Brownian continuum random tree (Brownian CRT). 
Let us explain this statement more formally. The trees  $T_n(2), n \geq 0$ may be viewed as metric spaces by considering that their edges are segments of length 1, and therefore belong to the set of so-called $\R$-trees. They are moreover endowed with a probability measure, the uniform probability on their leaves, which we denote by $\mu_n(2), n \geq 0$. To compare how close two such measured trees are, we use the so-called Gromov-Hausdorff-Prokhorov (GHP) topology  on the set of measured compact $\R$-trees. Background on that topic will be given in Section \ref{sec:background}. The above  result on the asymptotic behavior of the sequence $(T_n(2))$ can now be made precise as follows: there exists a compact $\R$-tree $\mathcal T_{\mathrm{Br}}$ distributed as the Brownian CRT  and a probability measure $\mu_{\mathrm{Br}}$ on $\mathcal T_{\mathrm{Br}}$ such that
\begin{equation}
\label{cvT2}
\left(\frac{T_n(2)}{n^{1/2}},\mu_n(2) \right) \ \overset{\mathrm{a.s.}}{\underset{n \rightarrow \infty}\longrightarrow} \  \left(2\sqrt 2 \mathcal T_{\mathrm{Br}}, \mu_{\mathrm{Br}}\right)
\end{equation}
for the GHP-topology. We point out that the almost sure convergence was not proved initially in \cite{Ald93}, which states, in a more general setting, convergence in distribution of rescaled Galton-Watson trees. However, it is implicit in \cite{PitmanStFl} and \cite{Mar03}. See also \cite[Theorem 5]{CH13} for an explicit statement.

Many classes of random trees are known to converge after rescaling towards the Brownian CRT. However, other limits are also possible, among which two important classes of random $\R$-trees: the class of Lévy trees introduced by Duquesne, Le Gall and Le Jan \cite{LGLJ98,DLG02,DLG05} (which is the class of all possible limits in distribution  of rescaled sequences of Galton-Watson trees \cite{DLG02}) and the class of self-similar fragmentation trees \cite{HM04,Steph13} (which is the class of  scaling limits of the so-called Markov branching trees \cite{HM12,HMPW08}). We will see in this paper that the sequence  $(T_n(k), n \geq 0)$ has a scaling limit belonging to this second category.  From now on, we will call ``fragmentation tree" any self-similar fragmentation tree, the self-similarity being implicit. Informally, a fragmentation tree with index of self-similarity $\alpha \in (-\infty,0)$ is a random compact $\R$-tree endowed with a probability measure that makes it self-similar: the subtrees  of this tree situated above a given height are distributed as the initial tree up to their own mass (with respect to the probability measure on the tree) to the power $\alpha$. These trees were introduced to code the genealogy of self-similar fragmentations, which are random processes modeling the evolution of blocks subject to splitting. We refer to \cite{BertoinBook} for background on fragmentation processes and to \cite{HM04,Steph13} for background on fragmentation trees.  In particular, it is known that the distribution of such a tree is characterized by three parameters: the index of self-similarity $\alpha$, an erosion coefficient $c \geq 0$ which corresponds to a continuous melting of the blocks and a so-called dislocation measure which is $\sigma$-finite  on the set of decreasing positive sequences with sum less than one. The role of this measure is to code the way sudden dislocations occur in the fragmentation process, or, in terms of trees, the way the relative masses of the subtrees descending from a given node are distributed. This measure  may be supported by sequences with sum strictly less than one which then means that some mass is lost during the dislocation of a fragment.  In this case, the fragmentation is called non-conservative, while it is called conservative in the other case All fragmentation trees considered in this paper have an erosion coefficient equal to 0, which is implied from now on. 
   
The Brownian CRT belongs to the family  of fragmentation trees \cite{BertoinSSF}. Its index of self-similarity is $-1/2$ and its dislocation measure $\nu^{\downarrow}_2$ is supported on the $1$-dimensional simplex $\mathcal S_2=\{\mathbf s=(s_1,s_2) \in [0,1]^2, s_1+s_2=1\}$ and defined by
$$
\nu^{\downarrow}_2({\mathrm d s_1})=\sqrt{\frac{2}{\pi}} s_1^{-3/2}s_2^{-3/2} \mathbbm 1_{\{s_1 \geq s_2\}} \mathrm d  s_1=\sqrt{\frac{2}{\pi}} s_1^{-1/2}s_2^{-1/2}\left( \frac{1}{1-s_1}+\frac{1}{1-s_2}\right)\mathbbm 1_{\{s_1 \geq s_2\}}\mathrm d s_1,
$$
where $\mathrm d s_1$ denotes the Lebesgue measure on $[0,1]$. Of course the constraint $s_1 \geq s_2$ is here equivalent to $s_1 \geq 1/2$, but we keep the first notation in view of generalizations.  

\bigskip

Our main goal is to generalize the convergence (\ref{cvT2}) to the sequences of trees $(T_n(k), n \geq 0)$ for all integers $k \geq 2$. Let $\mathcal S_k$ be the closed $(k-1)$-dimensional simplex and its variant $\mathcal S_{k,\leq}$ of dimension $k$ obtained by allowing the sum to be less than 1,
$$\mathcal S_k=\left\{\mathbf s=(s_1,s_2,...,s_k) \in [0,1]^k : \sum_{i=1}^k s_i=1\right \}; \quad \mathcal S_{k,\leq}=\left\{\mathbf s=(s_1,s_2,...,s_k) \in [0,1]^k : \sum_{i=1}^k s_i \leq 1\right \}.$$
Both spaces are endowed with the distance $$d_k(\mathbf s,\mathbf s')=\sum_{i=1}^k |s_i-s_i'|,$$ which makes them compact.
The Lebesgue measure on $\s_k$ can be written as $\mathrm d\mathbf{s}=\prod_{i=1}^{k-1}\mathrm ds_i$, with $s_k$ being implicitly defined by $1-\sum_{i=1}^{k-1}s_i$, whereas that on $\mathcal S_{k,\leq}$ should be understood as 
$\mathrm d\mathbf{s}=\prod_{i=1}^{k}\mathrm ds_i.$

\begin{theo}
\label{thmain} Let $\mu_n(k)$ be the uniform measure on the leaves of $T_n(k)$.
There exists a  $k$-ary  $\R$-tree $\mathcal T_k$, endowed with a probability measure on its leaves $\mu_k$, such that
$$
\left(\frac{T_n(k)}{n^{1/k}}, \mu_n(k) \right)\ \overset{\mathbb{P}}{\longrightarrow} \ \left(\mathcal T_{k},\mu_k\right)
$$
for the GHP-topology.
The measured tree  ($\mathcal T_k, \mu_k)$ belongs to  the family of conservative fragmentation trees, with index of self-similarity $-1/k$. Its dislocation measure $\nu^{\downarrow}_k$ is supported on  
$\mathcal S_k$ and defined by
$$
\nu^{\downarrow}_k({\mathrm d \mathbf s})=\frac{(k-1)!}{k(\Gamma(\frac{1}{k}))^{k-1}} \prod_{i=1}^k s_i^{-(1-1/k)}\left(\sum_{i=1}^k \frac{1}{1-s_i}\right)\mathbbm 1_{\{s_1\geq s_2 \geq ... \geq s_k\}}\mathrm d \mathbf s,
$$  
where $\Gamma$ stands for Euler's Gamma function.
\end{theo}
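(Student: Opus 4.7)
\medskip
\noindent\emph{Proof sketch.} The plan is to identify $(T_n(k))_{n \geq 0}$ as a Markov branching sequence and then apply the general scaling limit theorem of Haas-Miermont \cite{HM12} for such sequences. The argument decomposes into three steps.

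\emph{Step 1: Markov branching property.} Let $v_n$ be the internal node of $T_n(k)$ adjacent to the root, and let $T^{(1)},\ldots,T^{(k)}$ denote the $k$ subtrees hanging from $v_n$, with respective numbers of internal nodes $N_1,\ldots,N_k$ summing to $n-1$. Because an edge is picked uniformly at each growth step, the subsequence of edge-picks that fall inside subtree $T^{(i)}$ is autonomous and has the same law as an independent run of the growth dynamics on $N_i$ steps. Hence, conditionally on $(N_1,\ldots,N_k)$, the subtrees $T^{(1)},\ldots,T^{(k)}$ are independent and each $T^{(i)}$ is distributed as $T_{N_i}(k)$.

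\emph{Step 2: splitting distribution.} The joint evolution of $(N_1,\ldots,N_k)$ is a generalized Polya urn on $k+1$ colors: since subtree $T^{(i)}$ carries $kN_i+1$ edges while the root-edge contributes one more, at each step subtree $i$ is selected with probability $(kN_i+1)/(kn+1)$ and $N_i$ increments by one (with $k$ units added to that color), while with residual probability $1/(kn+1)$ the root-edge is picked, triggering a \emph{restructuring} event that creates a new first internal node and collapses the configuration (one giant subtree of size $\sim n$ plus $k-1$ trivial ones). Classical Polya urn asymptotics --- with the increment $k$ per pick being the source of the Dirichlet-like exponent $1/k$ --- combined with the fact that restructuring events produce configurations concentrated near the boundary $s_1\approx 1$, describe the asymptotic behavior of the rescaled splitting distribution $q_n^{(k)}$.

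\emph{Step 3: application of the Haas-Miermont theorem.} Let $q_n^\downarrow$ denote the law of the decreasing rearrangement of $(N_1,\ldots,N_k)/n$. Crucially, the HM criterion weights $q_n^\downarrow$ by the factor $(1-s_1)$, which vanishes precisely on the configurations produced by recent restructuring events; under this weighting restructuring becomes asymptotically negligible, and one recovers the convergence
$$n(1-s_1)\,q_n^\downarrow(\mathrm d\mathbf s)\ \longrightarrow\ (1-s_1)\,\nu_k^\downarrow(\mathrm d\mathbf s)$$
on $\mathcal{S}_k\setminus\{(1,0,\ldots,0)\}$. The Dirichlet$(1/k,\ldots,1/k)$ structure yields the density factor $\prod_i s_i^{-(1-1/k)}$; the ordering constraint $s_1\geq\cdots\geq s_k$ brings the combinatorial factor $(k-1)!$; the size-biased rescaling intrinsic to the criterion accounts for $\sum_i 1/(1-s_i)$; and matching the normalizations produces the prefactor $(k-1)!/(k\,\Gamma(1/k)^{k-1})$. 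Invoking \cite{HM12} then delivers the stated GHP convergence in probability of $T_n(k)/n^{1/k}$ to a conservative fragmentation tree of index $-1/k$ and dislocation measure $\nu_k^\downarrow$.

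The main obstacle is Step 2: the restructuring events, while occurring only at cumulative logarithmic rate $O(\log n)$, are individually drastic (they produce configurations with one coordinate $\approx 1$), so one must carefully show that their contribution to $n(1-s_1)\,q_n^\downarrow$ is asymptotically negligible and that the pure urn dynamics determines the limit. Once the splitting distribution is identified, matching the exact combinatorial prefactors of $\nu_k^\downarrow$ is a somewhat delicate but routine computation.
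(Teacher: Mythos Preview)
Your outline matches the paper's Section~3 strategy (Markov branching + Haas--Miermont), but there are two genuine gaps.

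\medskip
\textbf{First, the scaling in Step~3 is wrong.} The correct convergence is
\[
n^{1/k}(1-s_1)\,\bar q_n^{\downarrow}(\mathrm d\mathbf s)\ \Rightarrow\ (1-s_1)\,\nu_k^{\downarrow}(\mathrm d\mathbf s),
\]
with the factor $n^{1/k}$, not $n$. This exponent is what pins down the self-similarity index $-1/k$ in the Haas--Miermont theorem; with your scaling the limit would be a tree growing like $n$, not $n^{1/k}$. Relatedly, the paper does not obtain $q_n$ via P\'olya-urn asymptotics as you sketch: it computes $q_n(\lambda)$ explicitly by a direct combinatorial enumeration of edge-pick histories, then proves the weak convergence by Stirling-type estimates and Riemann sums (together with a tightness lemma controlling the boundary $\{\exists i:\, s_i<\eta\}$). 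Your urn heuristic is plausible but does not by itself deliver the precise density with the factor $\sum_i 1/(1-s_i)$; in the paper this factor arises from an explicit sum $\beta_n(\lambda_1/n)\to(1-s_1)^{-1}$ coming from the position of the first branch point, and the dissymmetry in index~1 (before symmetrizing) is essential.

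\medskip
\textbf{Second, and more seriously, \cite{HM12} yields convergence \emph{in distribution}, not in probability.} Theorem~5 of \cite{HM12} gives only
\[
\Big(\tfrac{T_n(k)}{n^{1/k}},\mu_n(k)\Big)\ \overset{\mathrm d}{\longrightarrow}\ (\mathcal T_k,\mu_k),
\]
so your Step~3 cannot ``deliver the stated GHP convergence in probability''. The paper devotes an entire separate section to upgrading this: it builds finite-dimensional marginals $T_n^p$ (spanned by the first $(k-1)p+1$ leaves), shows $n^{-1/k}T_n^p\to\mathcal T^p$ almost surely via Chinese restaurant processes with parameters $(1/k,1/k)$, proves a tightness estimate $\mathbb P(d_{\mathrm{GH}}(T_n^p,T_n)>\eta n^{1/k})\to 0$ uniformly in $n$ as $p\to\infty$ (using a second restaurant process and Poisson--Dirichlet tail bounds), and assembles these into convergence in probability toward a concrete limit $\mathcal T=\overline{\cup_p\mathcal T^p}$ built in $\ell^1$. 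Only then is the distribution of $\mathcal T$ identified with the fragmentation tree via the Section~3 result. Your proposal is missing this entire second half of the argument.
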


Note that the convergence is a little weaker than (\ref{cvT2}) since it is only a convergence in probability. However the  finite dimensional marginals of $T_n(k)$ converge almost surely as we will see later in Proposition \ref{prop:cvmarginalesGH}. 
Note also that  $\nu^{\downarrow}_k$ is a $\sigma$-finite measure on $\mathcal S_k$ such that $$\int_{\mathcal S_k}(1-s_1) \nu^{\downarrow}_k(\mathrm d \mathbf s)<\infty$$ but with infinite total mass. This fact implies in particular that the leaves of the tree $\mathcal T_k$ are dense in $\mathcal T_k$ (see \cite[Theorem 1]{HM04}).

\vspace{0.05cm}

Since the limiting tree is a fragmentation tree, we immediately have its Hausdorff dimension. Indeed, the Hausdorff dimension of conservative fragmentation trees was computed in \cite{HM04}  and this result was extended to general fragmentation trees in \cite{Steph13}. In particular, we know from \cite[Theorem 2]{HM04} that the Hausdorff dimension of a conservative fragmentation tree with index of self-similarity $\alpha<0$ and dislocation measure $\nu$ (and no erosion) is equal to $\max(|\alpha|^{-1},1)$ provided that the measure $\nu$ integrates $(s_1^{-1}-1)$ on the set of decreasing sequences with sum one. Here,  $$\int_{\mathcal S_k}(s_1^{-1}-1)\nu_k^{\downarrow}(\mathrm d \mathbf s)\leq \int_{\mathcal S_k}k^{-1}(1-s_1)\nu_k^{\downarrow}(\mathrm d \mathbf s)<\infty$$ since $s_1 \geq s_2 \geq ... \geq s_k$ together with $\sum_{i=1}^k s_i=1$ implies that $s_1 \geq 1/k$.

\begin{cor}
The Hausdorff dimension of tree $\mathcal T_k$ is almost surely $k$.
\end{cor}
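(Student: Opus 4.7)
The plan is to apply \cite[Theorem 2]{HM04} directly, since Theorem \ref{thmain} already identifies $(\mathcal T_k, \mu_k)$ as a conservative fragmentation tree with index $\alpha = -1/k$, dislocation measure $\nu_k^{\downarrow}$, and no erosion. That theorem states that under an integrability condition on $\nu$, the Hausdorff dimension of such a tree equals $\max(|\alpha|^{-1},1)$. With $\alpha=-1/k$ and $k\geq 2$, this would give $\max(k,1)=k$.

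First, I would recall the precise hypothesis of \cite[Theorem 2]{HM04}: the dislocation measure $\nu$ must be $\sigma$-finite on decreasing sequences summing to one (the conservative case) and must satisfy $\int (s_1^{-1}-1)\,\nu(\mathrm d \mathbf s)<\infty$. The conservativity of $\nu_k^{\downarrow}$ is immediate from its support being $\mathcal S_k$, and the $\sigma$-finiteness together with $\int(1-s_1)\,\nu_k^{\downarrow}(\mathrm d \mathbf s)<\infty$ is part of the content of Theorem \ref{thmain}.

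The only step remaining is the integrability of $(s_1^{-1}-1)$ against $\nu_k^{\downarrow}$, and this is the single calculation to do. Because $\mathbf s \in \mathcal S_k$ is decreasing with $\sum_i s_i = 1$, one automatically has $s_1 \geq 1/k$, hence
\[
s_1^{-1}-1 = \frac{1-s_1}{s_1} \leq k(1-s_1).
\]
Combining with the already-established bound $\int(1-s_1)\nu_k^{\downarrow}(\mathrm d \mathbf s)<\infty$ yields
\[
\int_{\mathcal S_k}(s_1^{-1}-1)\,\nu_k^{\downarrow}(\mathrm d \mathbf s) \leq k\int_{\mathcal S_k}(1-s_1)\,\nu_k^{\downarrow}(\mathrm d \mathbf s) < \infty,
\]
which is exactly the observation made in the paragraph preceding the corollary.

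With all hypotheses verified, \cite[Theorem 2]{HM04} gives $\dim_H(\mathcal T_k) = \max(k,1) = k$ almost surely. There is no real obstacle here; the proof is essentially a citation plus the two-line bound above. The only minor subtlety worth mentioning would be that the result of \cite{HM04} is stated for the fragmentation tree itself (an abstract construction from the dislocation measure), and one should note that $\mathcal T_k$ is identified with such a tree via Theorem \ref{thmain}, so that the dimension transfers without change.
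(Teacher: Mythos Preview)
Your proposal is correct and follows essentially the same approach as the paper: apply \cite[Theorem 2]{HM04} after checking the integrability condition $\int (s_1^{-1}-1)\,\nu_k^{\downarrow}(\mathrm d\mathbf s)<\infty$ via the bound $s_1^{-1}-1=(1-s_1)/s_1\le k(1-s_1)$ coming from $s_1\ge 1/k$. This is exactly the argument the paper gives in the paragraph preceding the corollary (your constant $k$ is in fact the right one; the $k^{-1}$ appearing in the paper's displayed inequality is a typo).
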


\medskip

%\begin{rem} 
\noindent \textit{Remark.}
From the recursive construction of the sequence $(T_n(k))$ one could believe at  first sight that the trees $T_n(k), n \geq 0$, as well as their continuous counterparts $\mathcal T_k$, are invariant under uniform re-rooting (which means that the law of the tree re-rooted at a leaf chosen uniformly at random is the same as the initial tree). Actually, this is only true for $k=2$. For $k=2$, this is a well-known property of the Brownian CRT (\cite{Ald91}). For $k \geq 3$,  it is easy to check for small values of $n$ that this property is not satisfied for $T_n(k)$. In the continuous setting, it is known that a fragmentation tree having the invariance under re-rooting property necessarily belongs to the family of stable Lévy trees (\cite{HPW09}). It is also well-known that, up to a multiplicative scaling, the unique stable Lévy tree without vertices of infinite degree is the Brownian CRT (\cite{DLG05}). Hence $\mathcal T_k$ is not invariant under uniform re-rooting for $k \geq 3$.
%\end{rem}

\bigskip

\noindent \textbf{Labels on edges and subtrees.} Partly for technical reasons, we want to label all the edges of $T_n(k)$, with the exception of the edge adjacent to the root, with integers from 1 to $k$ (see Figure \ref{Fig1} for an illustration). We do this recursively. The unique edge of $T_0(k)$ has no label since it is adjacent to the root. Given $T_n(k)$ and its labels, focus on the new vertex added in the middle of the selected edge. This edge was split into two: one new edge going towards the root, the other going away from it. Have the edge going towards the root keep the original label of the selected edge (no label if it is adjacent to the root), and have the other one be labelled $1$. The $k-1$ new edges added after that will be labelled $2,\ldots,k$, say uniformly at random (actually, the way  these $k-1$ additional edges are labelled is not important for our purpose, but the index 1 is important).

%\noindent \textbf{Joint convergence.} 
Now fix $2 \leq k'<k$. We consider, for all $n$, the  $k'$-ary  subtree of $T_{k}(n)$ obtained by discarding all edges with label larger than or equal to  $k'+1$ as well as their descendants. This subtree is denoted by $T_{k,k'}(n)$. We are interested in the sequence of subtrees $(T_{k,k'}(n), n \geq 0)$, because up to a (discrete) time-change in $n$, it is distributed as the sequence $(T_{k'}(n),n\geq 0$) (see Lemma \ref{Lemmalaw}). As a consequence, we will see that a rescaled version of $\mathcal T_{k'}$ is nested in $\mathcal T_k$. Moreover this version can be identified as a non-conservative fragmentation tree. All this is precisely stated in the following theorem.

\begin{theo} 
\label{thjoint}
For each $n \in \mathbb Z_+$, endow $T_n(k)$ with the uniform probability on its leaves $\mu_n(k)$ and $T_n(k,k')$ with the image of this probability by the projection on $T_n(k,k')$. This image measure is denoted by $\mu_n(k,k')$. Then
$$
\left(\left(\frac{T_n(k)}{n^{1/k}}, \mu_n(k) \right), \left(\frac{T_n(k, k')}{n^{1/k}}, \mu_n(k,k')\right)\right) \ \overset{\mathbb P}{\underset{n \rightarrow \infty} \longrightarrow} \ \left((\mathcal T_k ,\mu_k),( \mathcal T_{k,k'}, \mu_{k,k'}) \right)
$$ 
for the GHP-topology, where $\mathcal T_{k,k'}$ is a closed subtree of $\mathcal T_k$ and $(\mathcal T_{k,k'}, \mu_{k,k'})$ has the distribution of a non-conservative fragmentation tree with index $-1/k$. Its dislocation measure $\nu^{\downarrow}_{k,k'}$ is supported on $\mathcal S_{k',\leq}$ and defined by $$\nu^{\downarrow}_{k,k'}({\mathrm d \mathbf s})=\frac{(k'-1) !}{k(\Gamma(\frac{1}{k}))^{k'-1} \Gamma(1-\frac{k'}{k})}\times \frac{1}{(1-\sum_{i=1}^{k'} s_i)^{k'/k}} \prod_{i=1}^{k'} s_i^{-(1-1/k)} \left(\sum_{i=1}^{k'} \frac{1}{1-s_i}\right) \mathbbm 1_{\{s_1\geq ...  \geq s_{k'}\}} \mathrm d\mathbf{s}.$$
Moreover,
\begin{equation}
\label{lawsubtree}
\mathcal T_{k,k'} \overset{\mathrm{(d)}}= M^{1/k'}_{k'/k,1/k} \cdot \mathcal T_{k'} 
\end{equation}
where in the right side, $M_{k'/k,1/k}$ has a generalized Mittag-Leffler distribution with parameters $(k'/k,1/k)$ and is independent of $\mathcal T_{k'} $.
\end{theo}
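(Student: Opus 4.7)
My plan is to work in two phases: first establish the scaling identity \eqref{lawsubtree} through a P\'olya-urn time-change, then upgrade this to joint GHP convergence and identify the fragmentation parameters. The starting point is Lemma \ref{Lemmalaw}, which I read as realising $(T_n(k,k'))_{n\geq 0}$ in law as $(T_{M_n}(k'))_{n\geq 0}$, where $M_n$ is the number of internal nodes of $T_n(k,k')$ and the clock $(M_n)$ is taken independent of the base sequence $(T_m(k'))_{m \geq 0}$. The process $M_n$ is driven by a two-colour generalised P\'olya urn with triangular replacement matrix $\bigl(\begin{smallmatrix} k' & k-k' \\ 0 & k \end{smallmatrix}\bigr)$, obtained by tracking whether the edge selected at step $n$ lies inside or outside $T_n(k,k')$: if it is inside, the split contributes $k'$ new ``inside'' edges (two halves still inside, plus $k'-1$ of the $k-1$ new branches with labels $\leq k'$) and $k-k'$ ``outside'' ones; if it is outside, all $k$ new edges are outside.

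Classical martingale methods for such urns yield $M_n/n^{k'/k}\to L$ almost surely for a positive random variable $L$. To identify $L$, I would compute integer moments of $M_n$ recursively from the urn dynamics and match them with the Gamma-function moments of the generalised Mittag-Leffler distribution $M_{k'/k,1/k}$; this should identify $L^{1/k'}$ with $M_{k'/k,1/k}^{1/k'}$ up to the prefactor appearing in the statement. Combined with Theorem \ref{thmain} applied to $k'$---namely $T_m(k')/m^{1/k'}\overset{\mathbb{P}}{\to}\mathcal{T}_{k'}$---the decomposition
\[
\frac{T_n(k,k')}{n^{1/k}} \;=\; \left(\frac{M_n}{n^{k'/k}}\right)^{\!1/k'}\cdot\frac{T_{M_n}(k')}{M_n^{1/k'}}
\]
gives $T_n(k,k')/n^{1/k}\overset{\mathbb{P}}{\to}L^{1/k'}\cdot\mathcal{T}_{k'}$ with $L^{1/k'}$ independent of $\mathcal{T}_{k'}$, which is precisely \eqref{lawsubtree}.

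For the joint convergence with $T_n(k)/n^{1/k}$, I would combine marginal convergences with the pointwise inclusion $T_n(k,k')\subset T_n(k)$. Using the almost-sure convergence of finite-dimensional marginals of $T_n(k)$ indicated in Proposition \ref{prop:cvmarginalesGH} together with the tightness obtained above, I would simultaneously embed the two rescaled discrete trees into a common GHP limit; because closed subtrees are preserved under GHP convergence (by the Hausdorff-distance component of the topology), $\mathcal{T}_{k,k'}$ is naturally realised as a closed subtree of $\mathcal{T}_k$.

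It remains to identify $(\mathcal{T}_{k,k'},\mu_{k,k'})$ as a non-conservative fragmentation tree of index $-1/k$ with dislocation measure $\nu_{k,k'}^{\downarrow}$. Self-similarity should be inherited from the Markov-branching structure of $\mathcal{T}_k$ restricted to the kept subtree: at each branch point the $k'$ surviving subtrees are conditionally independent rescaled copies of $\mathcal{T}_{k,k'}$, and since the metric is inherited from $\mathcal{T}_k$ while mass is preserved under projection, the index is $-1/k$ as for the ambient tree. The main obstacle, and the most delicate calculation, is the explicit form of $\nu_{k,k'}^{\downarrow}$: I would attempt this by analysing the root dislocation of $\mathcal{T}_{k,k'}$ in one of two ways---either (i) through a careful projection of $\nu_k^{\downarrow}$ accounting for the precise label-$1$-versus-uniform-label mechanism that determines which $k'$ of the $k$ sub-masses survive (a naive uniform selection appears to introduce an integrability obstruction at the origin, so the correct weighting is subtle), or (ii) through the scaling identity \eqref{lawsubtree}, using the moments of $M_{k'/k,1/k}$ together with the tagged-fragment characterisation of fragmentation dislocation measures. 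The factor $\Gamma(1-k'/k)^{-1}$ in the theorem's formula signals that a Beta- or Dirichlet-type integral encoding the mass loss $1-\sum_{i=1}^{k'} s_i$ will arise; correctly tracking all constants, and in particular matching the term $\sum_{i=1}^{k'} (1-s_i)^{-1}$ without spurious contributions from the discarded masses, is the delicate part.
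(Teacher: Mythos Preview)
Your plan for \eqref{lawsubtree} is essentially the paper's: Lemma \ref{Lemmalaw} gives the time-change $T_n(k,k')=\tilde T_{I_n}(k')$ with $(I_n)$ independent of $(\tilde T_m(k'))$, and the factorisation $n^{-1/k}T_n(k,k')=(I_n/n^{k'/k})^{1/k'}\cdot I_n^{-1/k'}\tilde T_{I_n}(k')$ yields the identity. The paper shortcuts your moment computation by observing that the transition probabilities of $(I_n)$ coincide with those of the number of tables in a $(k'/k,1/k)$ Chinese restaurant, so Pitman's Theorem 3.8 immediately gives $I_n/n^{k'/k}\to M_{k'/k,1/k}$ almost surely. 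For the joint convergence the paper does not argue abstractly that ``subtrees are preserved under GHP limits''; instead it uses that the projection $\pi$ from a tree to a closed subtree is $1$-Lipschitz for the Hausdorff and Prokhorov metrics (Lemma \ref{lem:projlip}), so the GHP distance between $n^{-1/k}T_n^p(k,k')$ and $n^{-1/k}T_n(k,k')$ is dominated by that between $n^{-1/k}T_n^p(k)$ and $n^{-1/k}T_n(k)$, and the tightness already established for the ambient tree transfers for free.

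The genuine gap is the identification of $\nu^{\downarrow}_{k,k'}$. Both of your routes are harder than what is actually needed, and route (i) as you phrase it is aiming at the wrong object. The selection mechanism is not ``which $k'$ of the $k$ size-ordered sub-masses survive'': the subtree $T_n(k,k')$ keeps exactly the children with edge labels $1,\ldots,k'$, and the discrete splitting law $q_n$ was recorded in the paper precisely in this label-indexed (asymmetric) form, with $s_1$ playing a distinguished role. Working with the \emph{labelled} limit $\nu_k$ rather than $\nu_k^{\downarrow}$, the projection onto the first $k'$ coordinates is just integration over $s_{k'+1},\ldots,s_k$; the Dirichlet integral \eqref{beta} produces the factor $(1-\sum_{i\leq k'} s_i)^{-k'/k}/\Gamma(1-k'/k)$ you anticipated (Corollary \ref{coro:cvmes}), and the term $\sum_{i\leq k'}(1-s_i)^{-1}$ then appears only when one reorders to get $\nu^{\downarrow}_{k,k'}$, exactly as for $\nu^{\downarrow}_k$ from $\nu_k$. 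There is no integrability obstruction and no subtle weighting once you stay on the labelled side.

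Finally, having the correct limit of the splitting measures is not enough on its own: the scaling-limit theorem of \cite{HM12} you would invoke is stated only for conservative splittings. The paper handles this by checking directly that the finite-dimensional marginals of $n^{-1/k}(T_n(k,k'),\mu_n(k,k'))$ converge to those of the $(-1/k,\nu^{\downarrow}_{k,k'})$ fragmentation tree, via the tagged-fragment Markov chain and induction on the number of marked leaves (Section \ref{sec:loinonconservatif}). Your sketch ``self-similarity should be inherited'' glosses over this step; neither route (i) nor route (ii) provides a substitute for it.
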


The identity (\ref{lawsubtree}) is similar to results established in \cite{CH13} on the embedding of stable Lévy trees. The precise definition of generalized Mittag-Leffler distribution will be recalled in Section \ref{sec:stack}.  In that section we will also see how to extract a random rescaled version of $\mathcal T_{k'} $ directly from the limiting fragmentation tree $\mathcal T_k$  by adequately pruning subtrees on each of its branch point (Proposition \ref{prop:prun}). 

\bigskip

\noindent \textbf{Organization of the paper.} After having recalled background on $\R$-trees and the GHP metric in Section \ref{sec:background}, we will use two approaches to prove our results. The first one, developed in Section \ref{sec:cvd}, consists in checking that the sequence $(T_n(k),n\geq 0)$ possesses the so-called Markov branching property and then use results of Haas and Miermont \cite{HM12} on scaling limits of Markov branching trees to obtain the convergence in distribution of the rescaled trees $(T_n(k))$ towards a fragmentation tree. Our second approach, in Section \ref{sec:cvproba},  is based on urn schemes and the Chinese restaurant process of Pitman \cite{PitmanStFl}. It provides us the convergence in probability of the rescaled trees $(T_n(k))$ towards a compact $\R$-tree, but does not allow us to identify the limiting tree as a fragmentation tree. Combination of these two approaches then fully proves Theorem \ref{thmain}. In Section \ref{sec:cvproba}, we also treat the convergence in probability of the rescaled subtrees $(T_n(k,k'))$. The distribution of the limit will be identified in Section \ref{sec:stack}, hence giving the convergence results of Theorem \ref{thjoint}. Lastly, still in Section \ref{sec:stack}, we study the embedding of the limiting trees $\mathcal T_k$ as $k$ varies: for all $k'<k$, we show how to extract directly from  $\mathcal T_k$ a tree with the distribution of $\mathcal T_{k,k'}$ and prove the relation (\ref{lawsubtree}).

\bigskip

\medskip

\textbf{From now on, $k$ and $k'$ are fixed, with $2 \leq k' < k$. To lighten notation, we will use, up until Section \ref{sec:joint}, $T_n$ instead of $T_n(k)$ and $T'_n$ instead of $T_n(k,k')$.}

%%%%%%%%%%%%%%%%%%%%%%%%%%%%%%%%%%%%%%%%%%%%%%%%%%%%%%%%%%
\section{Background on $\mathbb R$-trees and GHP-topology}
\label{sec:background}
%%%%%%%%%%%%%%%%%%%%%%%%%%%%%%%%%%%%%%%%%%%%%%%%%%%%%%%%%%

%\noindent \textbf{$\mathbb R$-trees,  GHP topology.} 
We briefly recall background on $\R$-trees (or real trees) and Gromov-Hausdorff-Prokhorov distance, and refer to \cite{Eva08,LG06} for an overview on this topic.

An $\R$-tree is a metric space $(\T,d)$ such that, for any points $x$ and $y$ in $\T$, there exists a geodesic path from $x$ to $y$ and, up to time-reparametrization, this is the only continuous self-avoiding path from $x$ to $y$. We denote by $[[x,y]]$ this geodesic, and also write $]]x,y]]$ or $[[x,y[[$ when we want to exclude $x$ or $y$. Our trees will always be rooted at a point $\rho\in\T$. The height of a point $x\in\T$ is defined as $ht(x)=d(x,\rho)$ and the height of the tree itself is the supremum of the heights of its points. The set of descendants of $x$, called $\T_x$, is the set of all $y\in\T$ such that $x\in [[\rho,y]]$. The degree of $x$ is the number of connected components of $\mathcal T \backslash \{x\}$. We call leaves of $\T$ all the points which have degree $1$, excluding the root. A $k$-ary tree is a tree  whose points have degrees in $\{1,2,k+1\}$ (with at least one point of degree $k+1$).
Given two points $x$ and $y$, we define  $x\wedge y$ as the unique point of $\mathcal T$ such that $[[\rho,x]]\cap [[\rho,y]]=[[\rho,x \wedge y]]$. It is called the branch point of $x$ and $y$ if its degree is larger or equal to 3. For $a>0$, we define the rescaled tree $a\T$ as $(\T,ad)$ (the metric $d$ thus being implicit and dropped from the notation). Finally, note that any graph-theoretical tree can be viewed as an $\R$-tree by considering each edge as a line segment with an arbitrarily chosen length, usually $1$.

Recall that, if $A$ and $B$ are two nonempty compact subsets of a metric space $(E,d)$,  the Hausdorff distance between $A$ and $B$ is defined by 
	\[d_{E,\mathrm{H}}(A,B)=\inf \big\{\varepsilon>0\:;\: A\subset B^\varepsilon\text{ and }B\subset A^\varepsilon \big\},
\]
where $A^{\varepsilon}$ and $B^{\varepsilon}$ are the closed $\varepsilon$-enlargements of $A$ and $B$. The Gromov-Hausdorff convergence generalizes this and allows us to talk about convergence of compact $\R$-trees. Given two compact rooted trees $(\T,d,\rho)$ and $(\T',d',\rho')$, let
	\[d_{\mathrm{GH}}(\T,\T') = \inf \big[ \max (d_{\mathcal{Z},\mathrm{H}} (\phi(\T),\phi'(\T')), d_\mathcal{Z}(\phi(\rho),\phi'(\rho')))\big],
\]
where the infimum is taken over all pairs of isometric embeddings $\phi$ and $\phi'$ of $\T$ and $\T'$ in the same metric space $(Z,d_{Z}),$ for all choices of metric spaces $(Z,d_{Z})$. We will also be concerned with \textit{measured} trees, that is  $\R$-trees equipped with a probability measure on their Borel sigma-field (it is implicit from now on that in this paper a measure on a metric space is actually a Borel measure). To this effect, recall first the definition of the Prokhorov distance between two probability measures $\mu$ and $\mu'$ on a metric space $(E,d)$:
	\[d_{E,\mathrm P}(\mu,\mu')=\inf \big\{\varepsilon >0\:;\: \forall A\in\mathcal{B}(E), \mu(A)\leq\mu'(A^{\varepsilon})+\varepsilon\text{ and } \mu'(A)\leq\mu(A^{\varepsilon})+\varepsilon \big\}.
\]
Now, given two measured compact rooted trees $(\T,d,\rho,\mu)$ and $(\T',d',\rho',\mu')$, we let
  \[d_{\mathrm{GHP}}(\T,\T') = \inf \big[ \max (d_{\mathcal{Z},\mathrm{H}} (\phi(\T),\phi'(\T')), d_\mathcal{Z}(\phi(\rho),\phi'(\rho')),d_{\mathcal{Z}, \mathrm P}(\phi_*\mu,\phi'_*\mu')\big],
\]
where the infimum is taken on the same space as before and $\phi_*\mu$, $\phi'_*\mu'$ are the push-forwards of $\mu$, $\mu'$ by $\phi$, $\phi'$.

As shown in \cite{EPW06} and \cite{ADH}, the space of compact rooted $\R$-trees (respectively compact measured rooted $\R$-trees), taken up to root-preserving isomorphisms (resp. root-preserving and measure-preserving) and equipped with the GH (resp. GHP) metric is Polish. In this paper, we will implicitly identify two (measured) rooted $\mathbb R$-trees when their are isometric and still use the notation $(\mathcal T,d)$ (or $\mathcal T$ when the metric is clear) to design their isometry class.
Typically, the GHP convergence of a sequence is shown by exhibiting a specific embedding of our trees in the space $\ell^1$ of summable sequences, under which we have Hausdorff convergence of the trees and Prokhorov convergence of the measures.

%%%%%%%%%%%%%%%%%%%%%%%%%%%%%%%%%%%%%%%%%%%%%%%%%%%%%%%%%%
\section{Convergence in distribution and identification of the limit}
\label{sec:cvd}
%%%%%%%%%%%%%%%%%%%%%%%%%%%%%%%%%%%%%%%%%%%%%%%%%%%%%%%%%%

In this section, we use \cite[Theorem 5]{HM12} on scaling limits of Markov branching trees to get the convergence in distribution of the rescaled trees $n^{-1/k}T_n$ and identify the limit distribution. Actually, the method used in the following section will yield a stronger convergence, convergence in probability, but that approach does not allow us to identify the distribution of the limit.  We will also set up here some material needed to identify the distribution of the limit of the subtrees $n^{-1/k}T_n'$. The convergence of these subtrees will be proved in the next section, and the limit will then be identified in Section \ref{sec:stack}.  

Let $n\in\mathbb Z_+$ and consider the tree $T_{n+1}$. Its root is connected to only one edge, after which there are $k$ subtrees. These subtrees can be identified by the label given to their first edge, and we call them $(T^i_n)_{i\leq k}$, where $i \leq k$ refers to the edge labelled $\mathrm{n}°i$ (implicitly, $i \geq 1$ here). For all $i\leq k$, we let $X^i_n$ be the number of internal nodes of $T^i_n$ and we let $q_n$ be the distribution of $(X^i_{n})_{i\leq k}$ seen as an element of $$\mathcal C^k_n=\left\{ \lambda=(\lambda_1,...,\lambda_k) \in \mathbb Z_+^k : \sum_{i=1}^k \lambda_i=n \right\}.$$
%We then denote by $q_n^{\downarrow}$ the distribution on the set  $\mathcal P_n$ of partitions of $n$ obtained by considering the decreasing rearrangement of the $X^i_{n},i\in[k]$.
To use the results of \cite{HM12}, we have to check 
\begin{enumerate}
\item[(i)] that the sequence $(T_n)$ is Markov branching, which roughly means that conditionally on their sizes, the trees $T^i_n, i\leq k,$ are mutually independent and have, respectively, the same distribution as $T_{X^i_n}, i\leq k$; 
\item[(ii)] that appropriately rescaled, the distribution $q_n$ converges. 
\end{enumerate}
We start by studying this probability $q_n$ in Section \ref{sec:asympqn} and then prove the Markov branching property and get the limit distribution in Section \ref{sec:conclusion}.

%%%%%%%%%%%%%%%%%%%%%%%%%%%%%%%%%%
\subsection{Description and asymptotics of the measure $q_n$}
\label{sec:asympqn}
%%%%%%%%%%%%%%%%%%%%%%%%%%%%%%%%%%

Let $\bar{q}_n$ be the distribution of $(X^i_n/n)_{i\leq k}$, it is a probability measure on $\s_k$, $\forall n \geq 1$. As we will see below in Proposition \ref{prop:cvmes}, the continuous scaling limit of these distributions is  the measure $\nu_k$ on ${\s}_k$ defined by
	\[\nu_k(\mathrm d \mathbf{s})=\frac{1}{k(\Gamma(\frac{1}{k}))^{k-1}}\frac{1}{1-s_1} \prod_{i=1}^k s_i^{-(1-1/k)} \mathrm d\mathbf{s}.
\]
Note the dissymmetry between the index 1 and the others. This is due the fact that the subtree $T_n^1$ is often much larger than the other ones, since, in the $n$-th step of the recursive construction, in the case where the new $k-1$ edges are added on the edge adjacent to the root, the subtree $T_n^1$ has $n$ internal nodes whereas the $k-1$ other ones have none.

Since we are also interested in describing the asymptotic behaviour of the subtrees $T'_n$, we will also need to consider, for $n \geq 1$, the probability measures $\overline q'_n$ on $S_{k',\leq}$ obtained by considering the first $k'$ elements of $(X^i_n/n)_{i\leq k}$. Their continuous scaling limit (see Corollary \ref{coro:cvmes}) is denoted by $\nu_{k,k'}$ and defined on $\mathcal S_{k',\leq}$ by
$$
\nu_{k,k'}(\mathrm d\mathbf{s})=\frac{1}{k(\Gamma(\frac{1}{k}))^{k'-1} \Gamma(1-k'/k)}\times \frac{1}{\left(1-s_1\right)(1-\sum_{i=1}^{k'} s_i)^{k'/k}} \prod_{i=1}^{k'} s_i^{-(1-1/k)} \mathrm d\mathbf{s}.
$$
For $\mathbf{s}\in \s_k$, we let $\mathbf{s}^{\downarrow}$ be the sequence obtained by reordering the elements of $\mathbf s$ in the decreasing order. This map is continuous from $\s_k$ to $\s_k$. For any probability measure $\mu$ on $\s_k$, then let $\mu^{\downarrow}$ be the image of $\mu$ by it. 

\bigskip

\noindent \textbf{Examples.} For instance, one can check that  the measure  $\nu^{\downarrow}_k$ associated to $\nu_k$ indeed coincides with the definition of $\nu^{\downarrow}_k$ in Theorem \ref{thmain}. And similarly for the measure $\nu^{\downarrow}_{k,k'}$ and its expression in Theorem \ref{thjoint}.

\bigskip

The main goal of this section is to prove the following result. 

\begin{prop}
\label{prop:cvmes}
We have the following weak convergence of measures on ${\s}_k$:
	\[n^{1/k} (1-s_1)\bar{q}_n(\mathrm d \mathbf{s}) \underset{n\to\infty} \Rightarrow  (1- s_1)\nu_k(\mathrm d \mathbf{s}).  \\
\]
As a consequence, \[n^{1/k} (1-s_1)\bar{q}_n^{\downarrow}(\mathrm d \mathbf{s}) \underset{n\to\infty}\Rightarrow (1-s_1)\nu_k^{\downarrow}(\mathrm d \mathbf{s}).
\]
\end{prop}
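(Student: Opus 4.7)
The plan is to derive an explicit closed-form expression for $q_n(\lambda_1,\ldots,\lambda_k)$ and then extract the asymptotics via Stirling's formula. The crucial observation is that at each construction step $m \geq 1$, the ``reset event'' (the root edge of $T_{m-1}$ being selected) has probability exactly $1/(k(m-1)+1)$ regardless of the current state of the tree, so the reset events at different steps are \emph{independent} Bernoulli random variables. Let $\tau_n$ denote the last reset step occurring during the construction of $T_{n+1}$ and set $n_0 := \tau_n - 1$; immediately after this reset the subtree sizes equal $(n_0, 0, \ldots, 0)$, and from then until time $n$ the dynamics of $(X^i_n)_{i \leq k}$ is that of a standard Pólya urn with $k$ colors, initial composition $(kn_0+1, 1, \ldots, 1)$, and increment $k$ balls of the drawn color at each step.

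Using this decomposition I would write
\[
q_n(\lambda) \;=\; \sum_{n_0=0}^{\lambda_1} \mathbb P(\tau_n = n_0+1)\,\mathbb P\bigl(X = \lambda \mid \tau_n = n_0+1\bigr),
\]
using the classical Gamma-function expression for Pólya urn distributions. Both factors simplify dramatically via the telescoping identity $\prod_{m=1}^{n}\frac{km}{km+1} = \Gamma(1+1/k)\,n!/\Gamma(n+1+1/k)$, and the remaining sum over $n_0$ collapses by the hockey-stick identity $\sum_{j=0}^{\lambda_1}\binom{m+j}{j} = \binom{m+\lambda_1+1}{\lambda_1}$ (with $m = n - \lambda_1$) to give
\[
q_n(\lambda_1, \ldots, \lambda_k) \;=\; \frac{(n+1)!}{k(n - \lambda_1 + 1)\,\Gamma(n + 1 + 1/k)\,\Gamma(1/k)^{k-1}} \prod_{i=1}^{k} \frac{\Gamma(\lambda_i + 1/k)}{\lambda_i!}.
\]
As a sanity check, specializing to $\lambda = (n, 0, \ldots, 0)$ recovers $q_n(n, 0, \ldots, 0) = (n+1)/(kn+1)$, which one may independently verify from the recursion $r_{n+1}(kn+k+1) = 1 + (kn+1)\,r_n$ coming from the growth dynamics at the degenerate state.

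Next I would apply Stirling's formula. Setting $\lambda_i = n s_i$ for $\mathbf s$ in the interior of $\mathcal S_k$ and using $\Gamma(A + 1/k)/A! \sim A^{1/k - 1}$, the explicit formula gives
\[
q_n(\lambda) \;\sim\; \frac{n^{1 - k - 1/k}}{k(1 - s_1)\,\Gamma(1/k)^{k-1}} \prod_{i=1}^{k} s_i^{-(1 - 1/k)}.
\]
Since the discrete support of $\bar q_n$ has $n^{k-1}\,\mathrm d\mathbf s$ points per unit Lebesgue volume on $\mathcal S_k$, the density of $\bar q_n$ with respect to $\mathrm d\mathbf s$ on the interior is $\sim n^{-1/k}\nu_k(\mathbf s)$, giving pointwise density convergence of $n^{1/k}(1 - s_1)\bar q_n$ to $(1 - s_1)\nu_k$.

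The remaining step is to upgrade pointwise density convergence to weak convergence of measures on the compact simplex $\mathcal S_k$; this is where the only real technical work lies. The weight $(1 - s_1)$ serves two purposes: it kills the large atom at $(1, 0, \ldots, 0)$, where $\bar q_n$ places mass $r_n \to 1/k$, and it makes $(1 - s_1)\nu_k$ a finite measure (with total mass $\Gamma(1/k)/k$, checkable either by direct integration or as the limit of $n^{1/k}\mathbb E[1 - X^1_n / n]$ from the closed form). I would then apply dominated convergence against a continuous test function, using the uniform Stirling bound $\Gamma(\lambda_i + 1/k)/\lambda_i! \leq C(\lambda_i \vee 1)^{1/k - 1}$ together with the integrability of $\prod_i s_i^{1/k - 1}$ on $\mathcal S_k$ to control the boundary contributions. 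The same Stirling argument works verbatim with $(1 - s_1)$ replaced by the continuous weight $(1 - \max_i s_i)$, also vanishing at the atom $(1, 0, \ldots, 0)$; combining this with the continuous mapping theorem applied to the reordering map $\phi \colon \mathbf s \mapsto \mathbf s^{\downarrow}$ and the identity $\phi_*\bigl[(1 - \max_i s_i)\,\bar q_n\bigr] = (1 - s_1)\,\bar q_n^{\downarrow}$ (and similarly for $\nu_k$) yields the $\bar q_n^{\downarrow}$ statement.
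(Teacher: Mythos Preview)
Your argument is correct and follows the same overall arc as the paper: compute $q_n(\lambda)$ explicitly, extract the density asymptotics via Stirling, and control the boundary of $\mathcal S_k$ uniformly in $n$ to upgrade to weak convergence. The genuine difference is in how the closed form for $q_n$ is obtained. The paper counts directly over the position $j$ of the first branch point and arrives at
\[
q_n(\lambda)=\frac{1}{k\,\Gamma(1/k)^{k-1}}\frac{\prod_{i=1}^k\gamma_k(\lambda_i)}{(n+1)\gamma_k(n+1)}\,\beta_n\!\left(\frac{\lambda_1}{n}\right),
\qquad \beta_n(x)=\sum_{j=0}^{\lfloor nx\rfloor}\frac{(nx)_j}{(n)_j},
\]
and then proves a separate lemma that $\beta_n(x)\to(1-x)^{-1}$ with the bound $(1-x)\beta_n(x)\le 1$. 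Your last-reset/P\'olya-urn decomposition, combined with the hockey-stick identity, collapses this same sum exactly to $(n+1)/(n-\lambda_1+1)$, so your closed form is strictly simpler and makes the factor $1/(1-s_1)$ appear without any limiting argument; this eliminates the need for the paper's Lemma on $\beta_n$. The remaining step you describe as ``dominated convergence'' is really a Riemann-sum argument on compacts of the open simplex together with a uniform boundary estimate coming from $\Gamma(\lambda_i+1/k)/\lambda_i!\le C(\lambda_i\vee 1)^{1/k-1}$; this is precisely the paper's tightness Lemma~3.5, so you should state it that way rather than invoke dominated convergence literally. For the decreasing version, your route via the weight $(1-\max_i s_i)$ and the continuous map $\mathbf s\mapsto\mathbf s^{\downarrow}$ is equivalent to the paper's trick of testing against $g_f(\mathbf s)=(1-s_1^{\downarrow})f(\mathbf s^{\downarrow})/(1-s_1)$: both rely on the observation that $(1-\max_i s_i)/(1-s_1)$ is continuous and bounded by $1$ on $\mathcal S_k$ (equal to $1$ whenever $s_1\ge 1/2$), so the second convergence is a corollary of the first rather than requiring a fresh Stirling analysis.
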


The \emph{symmetric Dirichlet measure} on ${\s}_k$ with parameter $k^{-1}$ is  $\Gamma(1/k)^{-k}(\prod_{i=1}^k s_i)^{-(1-1/k)} \mathrm d\mathbf{s}$. It is well-known and easy to check that this defines a probability measure on ${\s}_k$. As a direct consequence, we see  that 
\begin{equation*}
\int_{\s_k} (1-s_1)\nu_k(\mathrm d \mathbf{s}) = \frac{\Gamma{(1/k)}}{k}.
\end{equation*}
More generally, we will need several times in this paper the well-known fact that for any integer $K\geq 2$ and all $K$-uplets  $\alpha_1,...,\alpha_K>0$, 
\begin{equation}
\label{beta}
\int_{\s_K} \prod_{i=1}^K s_i^{\alpha_i-1} \mathrm d\mathbf{s}=\frac{\prod_{i=1}^K \Gamma(\alpha_i)}{\Gamma\big(\sum_{i=1}^K \alpha_i \big)},
\end{equation}
where $x_K=1-\sum_{i=1}^{K-1} x_i$. 

\vspace{0.1cm}
The results of Proposition \ref{prop:cvmes} can easily be transferred to $\overline q_n'$:
\begin{cor} 
\label{coro:cvmes}
We have the following weak convergences of measures on ${\s}_{k',\leq}$:
	\[n^{1/k} (1-s_1)\bar{q}'_n(\mathrm d\mathbf{s}) \underset{n\to\infty} \Rightarrow  (1- s_1) \nu_{k,k'}(\mathrm d \mathbf{s})  \\
\]
and, \[n^{1/k} (1-s_1)(\bar{q}'_n)^{\downarrow}(\mathrm d \mathbf{s}) \underset{n\to\infty}\Rightarrow (1-s_1)\nu_{k,k'}^{\downarrow}(\mathrm d \mathbf{s}).
\]
\end{cor}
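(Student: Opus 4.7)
The plan is to deduce Corollary \ref{coro:cvmes} directly from Proposition \ref{prop:cvmes} by pushing the convergence forward under the natural projection, then performing a beta-integral to identify the limiting density.

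First I would observe that, by definition, $\bar{q}'_n$ is the image of $\bar{q}_n$ under the continuous projection $\pi\colon \mathcal{S}_k\to\mathcal{S}_{k',\leq}$, $(s_1,\ldots,s_k)\mapsto(s_1,\ldots,s_{k'})$. Since the weight $1-s_1$ depends only on a coordinate preserved by $\pi$, the finite measures $n^{1/k}(1-s_1)\bar q'_n$ are exactly the $\pi$-pushforwards of $n^{1/k}(1-s_1)\bar q_n$. Because $1-s_1$ is bounded continuous, Proposition \ref{prop:cvmes} implies the weak convergence of the latter finite measures to $(1-s_1)\nu_k$, and the continuous mapping theorem transfers this to weak convergence of $n^{1/k}(1-s_1)\bar q'_n$ toward $\pi_{*}\bigl((1-s_1)\nu_k\bigr)$.

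The remaining task is to identify this pushforward. The density of $(1-s_1)\nu_k$ simplifies to $\frac{1}{k\Gamma(1/k)^{k-1}}\prod_{i=1}^k s_i^{-(1-1/k)}\mathrm d\mathbf s$, because the $(1-s_1)$ factor cancels the singular factor of $\nu_k$. Writing $u=\sum_{i=1}^{k'}s_i$, I would integrate out $s_{k'+1},\ldots,s_{k-1}$ (with $s_k=1-u-\sum_{i=k'+1}^{k-1}s_i$). After the substitution $s_i=(1-u)t_i$ the Jacobian contributes $(1-u)^{k-k'-1}$ and the product $\prod_{i=k'+1}^{k} s_i^{-(1-1/k)}$ contributes $(1-u)^{-(k-k')(1-1/k)}$, so the remaining $t$-integral is exactly of the form (\ref{beta}) over $\mathcal S_{k-k'}$ with every parameter equal to $1/k$, yielding $\Gamma(1/k)^{k-k'}/\Gamma(1-k'/k)$. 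Collecting exponents, $k-k'-1-(k-k')(1-1/k)=-k'/k$, so the push-forward density on $\mathcal S_{k',\leq}$ is
\[
\frac{1}{k\,\Gamma(1/k)^{k'-1}\Gamma(1-k'/k)}\,\frac{1}{(1-u)^{k'/k}}\prod_{i=1}^{k'}s_i^{-(1-1/k)}\,\mathrm d s_1\cdots \mathrm d s_{k'},
\]
which matches $(1-s_1)\nu_{k,k'}$ after the $(1-s_1)$ in the definition of $\nu_{k,k'}$ cancels its reciprocal. This proves the first convergence.

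For the second convergence, I would simply apply the continuous mapping theorem a second time with the continuous reordering map $\mathbf s\mapsto\mathbf s^{\downarrow}$ on $\mathcal S_{k',\leq}$; again $1-s_1$ is preserved since $s_1$ becomes the maximum. Thus the pushforward of the first convergence under reordering gives the second. The only genuinely technical step is the beta-type integration in the middle paragraph, and even there the main point is just to match exponents after the rescaling $s_i=(1-u)t_i$.
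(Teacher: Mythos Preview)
Your argument for the first convergence is correct and is essentially the paper's proof: both push the convergence of Proposition \ref{prop:cvmes} through the projection to $\mathcal S_{k',\leq}$ and then compute the marginal density via the beta integral (\ref{beta}). The paper phrases this as testing against a continuous $f$ rather than invoking the continuous mapping theorem, and it singles out the case $k'=k-1$ (where there is nothing to integrate), but the content is identical and your computation of the exponent $-k'/k$ is fine.

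Your argument for the second convergence has a genuine gap. The weight $1-s_1$ is \emph{not} preserved by the reordering map: pushing forward $n^{1/k}(1-s_1)\bar q'_n$ under $\mathbf s\mapsto\mathbf s^\downarrow$ yields the measure
\[
f\ \longmapsto\ n^{1/k}\int f(\mathbf s^\downarrow)\,(1-s_1)\,\bar q'_n(\mathrm d\mathbf s),
\]
whereas the target is $n^{1/k}\int f(\mathbf s^\downarrow)\,(1-s_1^\downarrow)\,\bar q'_n(\mathrm d\mathbf s)$; these differ whenever $s_1$ is not the largest coordinate, so a bare continuous-mapping argument does not close. The fix, which the paper carries out at the end of the proof of Proposition \ref{prop:cvmes} and then simply refers back to here, is to test the first convergence against
\[
g_f(\mathbf s)=\frac{(1-s_1^\downarrow)\,f(\mathbf s^\downarrow)}{1-s_1}.
\]
This is continuous and bounded on $\mathcal S_{k',\leq}$ because $s_1>1/2$ forces $s_1=s_1^\downarrow$ (all other coordinates are then at most $1/2$), so there is no singularity as $s_1\to 1$. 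Applying the first convergence to $g_f$ gives exactly the second one.
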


In order to prove Proposition \ref{prop:cvmes}, we start by explicitly computing the measure $q_n$ in Section \ref{sec:qn}. We then set up preliminary lemmas in Section \ref{sec:premlem} and lastly turn to the proofs of Proposition \ref{prop:cvmes} and Corollary \ref{coro:cvmes} in Section \ref{sec:proofcv}.

\subsubsection{The measure $q_n$}
\label{sec:qn}

\begin{prop} For all $\lambda\in \mathcal C^k_n$,
	\[
q_n(\lambda)=\frac{1}{k(\Gamma(\frac{1}{k}))^{k-1}}\left(\prod_{i=1}^k \frac{\Gamma(\frac{1}{k}+\lambda_i)}{\lambda_i!}\right)\frac{n!}{\Gamma(\frac{1}{k}+n+1)} \left(\sum_{j=1}^{\lambda_1+1} \frac{\lambda_1!}{(\lambda_1-j+1)!}\frac{(n-j+1)!}{n!}\right).
\]
\end{prop}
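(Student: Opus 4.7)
The plan is to view the construction of $T_{n+1}$ as a Markov chain on the state $(X^1,\ldots,X^k)$ and to decompose the event $\{(X^i_n)_{i\leq k}=\lambda\}$ according to the last step at which the root-adjacent edge was chosen. Step $s$ of the construction modifies $T_{s-1}$, which has $k(s-1)+1$ edges, into $T_s$: these edges consist of the root-adjacent edge plus $k X^i_{s-2}+1$ edges inside each subtree $T^i$ (when $s\geq 2$). Tracing the labelling rule, picking the root-adjacent edge---an event of probability $1/(k(s-1)+1)$---produces a ``reset'': the newly created vertex becomes the first internal node of $T_s$, the old first branch point of $T_{s-1}$ together with all its descendants slide below the new edge labelled $1$, while edges $2,\ldots,k$ each go to fresh leaves. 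Hence the post-reset state is the deterministic vector $(s-1,0,\ldots,0)$. Picking an edge inside $T^i$---an event of probability $(kX^i+1)/(k(s-1)+1)$---simply increments $X^i$ by one.

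Because a reset completely erases the past, I would condition on the time $j$ of the last reset and write
\[q_n(\lambda)=\sum_{j=1}^{\lambda_1+1} P\big(\text{last reset at step } j,\ (X^i_n)_{i\leq k}=\lambda\big),\]
the upper bound $\lambda_1+1$ reflecting the constraint $\lambda_1\geq j-1$ that comes from the post-reset value of $X^1$. For fixed $j$, the summand equals $1/(k(j-1)+1)$ (probability of the reset at step $j$) times the probability, starting from the post-reset state $(j-1,0,\ldots,0)$, of a specific sequence of $n+1-j$ non-reset increments reaching $\lambda$, summed over all orderings. The key observation is that the numerator of each transition depends only on the current count in the subtree being incremented, so every ordering contributes the same product. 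Summing over orderings therefore yields the multinomial $\binom{n+1-j}{\lambda_1-j+1,\lambda_2,\ldots,\lambda_k}$ multiplied by the deterministic ratio
\[\frac{\prod_{l=j-1}^{\lambda_1-1}(kl+1)\ \prod_{i=2}^k\prod_{l=0}^{\lambda_i-1}(kl+1)}{\prod_{l=j}^{n}(kl+1)},\]
in which the factor $k(j-1)+1$ coming from $l=j-1$ cancels precisely the reset probability.

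The last step is purely Gamma-function arithmetic: the identity $\prod_{l=0}^{N-1}(kl+1)=k^N\Gamma(N+1/k)/\Gamma(1/k)$ converts the rising products into ratios of Gamma functions, all powers of $k$ cancel thanks to $\sum_i\lambda_i=n$, and regrouping yields the target expression with the inner sum over $j$ preserved. The main obstacle I anticipate is the labelling bookkeeping for the reset step: one must carefully verify that each split of the root-adjacent edge causes the pre-existing branch point to be absorbed into the new subtree $T^1$, which is precisely what makes $T^1$ play the asymmetric role visible in the formula. Once this combinatorial point is secured, the remainder is a tidy computation with Pochhammer symbols.
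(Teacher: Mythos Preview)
Your proposal is correct and is essentially the same argument as the paper's. Your ``time of the last reset $j$'' is exactly the paper's random index $J$ (the apparition time of the internal node adjacent to the root), and both proofs compute the contribution of a single ordering of the post-reset increments---the paper's $p_j(\lambda)$, which it notes equals $p_1(\lambda)$ just as your $1/(k(j-1)+1)$ cancels with the $l=j-1$ factor---and then multiply by the multinomial count $\binom{n+1-j}{\lambda_1-j+1,\lambda_2,\ldots,\lambda_k}$ before converting the rising products into Gamma functions.
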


\begin{proof}
Let $N_1,\ldots,N_{n+1}$ be the $n+1$ internal nodes of $T_{n+1}$, listed in order of apparition, and let $J$ be the random variable such that $N_J$ is the first node encountered after the root of $T_n$. Recall that $T^1_n,\ldots,T^k_n$ denote the ordered subtrees rooted at $N_J$. For $\lambda \in \mathcal C_n^k$ and $j \in \mathbb N$, we first compute the probability $p_j(\lambda)$ that $J=j$, $T^1_n$ contains the nodes  $N_1,\ldots,N_{j-1},N_{j+1},N_{\lambda_1+1}$, $T^2_n$ contains the nodes $N_{\lambda_1+2},\ldots,N_{\lambda_1+\lambda_2+1}$ and so on until $T^k_n$, which contains the nodes $N_{\lambda_1+\ldots+\lambda_{k-1}+2},\ldots,N_{n+1}$. This probability is null for $j>\lambda_1+1$. For $1 \leq j \leq \lambda_1+1$, since each edge is chosen with probability $1/(1+kp)$ when constructing $T_{p+1}$ from $T_p$, $p\geq 1$, we get
\begin{align*}
p_j(\lambda)&=\frac{1}{1+k(j-1)}\prod_{p=j+1}^{\lambda_1+1}\frac{1+k(p-2)}{1+k(p-1)}\prod_{i=2}^k \prod_{p=1}^{\lambda_i}\frac{1+k(p-1)}{1+k(\lambda_1+\ldots+\lambda_{i-1}+p)} \\
&=\frac{\prod_{i=1}^k \prod_{p=1}^{\lambda_i-1} (1+kp)}{\prod_{p=1}^n (1+kp)}
\end{align*}
(by convention, a product indexed by the empty set is equal to 1). Note that $p_{j}(\lambda)=p_1(\lambda)$ for all $j\leq \lambda_1+1$.
Note also that this probability does not change if we permute the indices of nodes $N_{j+1},\ldots,N_n$ (both the numerator and the denominator have the same factors, just in different orders). We thus have
\begin{align*}
q_n(\lambda)&=\sum_{j=1}^{\lambda_1+1} \frac{(n-j+1)!}{(\lambda_1-j+1)! \prod_{i=2}^k \lambda_i !}  p_1(\lambda)\\
            &=\frac{n!}{\prod_{p=1}^n (1+pk)}\prod_{i=1}^k \frac{\prod_{p=1}^{\lambda_i-1} (1+pk)}{\lambda_i !}\sum_{j=1}^{\lambda_1+1} \frac{\lambda_1!}{(\lambda_1-j+1)!}\frac{(n-j+1)!}{n!}.
\end{align*}
The proof is then ended by using the fact that $\Gamma(\frac{1}{k}+q)=\Gamma(\frac{1}{k})k^{-q} \prod_{p=0}^{q-1}(1+kp)$ for any $q \in \mathbb Z_+$.
\end{proof}

\subsubsection{Preliminary lemmas}
\label{sec:premlem}

The proof of  Proposition \ref{prop:cvmes} relies on the convergence of some Riemann sums. To set up these convergences, we first
rewrite $q_n(\lambda), n \geq 1,$ in the form
\begin{equation}
\label{qn}
q_n(\lambda)=\frac{1}{k\Gamma(\frac{1}{k})^{k-1}}\frac{\prod_{i=1}^k \gamma_k (\lambda_i)}{(n+1)\gamma_k(n+1)}\beta_{n}\left(\frac{\lambda_1}{n}\right),
\end{equation}
where, for all $x\geq0$, 
	\[\gamma_k(x)=\frac{\Gamma(\frac{1}{k}+x)}{\Gamma(1+x)}
\]
and, for all $x\in [0,1]$ and $n\in \N$, 
	\[\beta_n(x)=1+\sum_{j=1}^{\lfloor nx \rfloor} \frac{nx(nx-1)\ldots(nx-j+1)}{n(n-1)\ldots(n-j+1)}.
\]

\begin{lemma} 
\label{lem:gamma}
The following convergence of functions 
$$
x \mapsto n^{1-1/k}\gamma_k(nx) \underset{n \rightarrow \infty}\longrightarrow  x\mapsto x^{-(1-1/k)}
$$
holds uniformly on all compact subsets of $(0,1]$. Moreover, there exists  a finite constant $A$ such that $\gamma_k(x)\leq A x^{-(1-1/k)}$  for all $x\geq0$. 
\end{lemma}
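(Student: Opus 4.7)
The plan is to reduce both statements to the classical asymptotic
\[
\frac{\Gamma(z+a)}{\Gamma(z+b)} = z^{a-b}\bigl(1+O(1/z)\bigr) \quad \text{as } z\to\infty,
\]
applied with $a = 1/k$ and $b = 1$. This expansion follows from Stirling's formula applied to numerator and denominator separately (or from the Binet integral representation of $\log\Gamma$), and the implied constant depends only on $k$.

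For the first claim, I would fix a compact $K\subset(0,1]$, choose $\varepsilon>0$ with $K\subset[\varepsilon,1]$, and apply the asymptotic with $z=nx$. This yields
\[
n^{1-1/k}\gamma_k(nx) \;=\; x^{-(1-1/k)}\bigl(1+R_n(x)\bigr), \qquad |R_n(x)| \leq \frac{C}{nx} \leq \frac{C}{n\varepsilon},
\]
where $C=C(k)$. Since $x^{-(1-1/k)}$ is bounded by $\varepsilon^{-(1-1/k)}$ on $[\varepsilon,1]$, the left-hand side converges to $x^{-(1-1/k)}$ uniformly on $K$, proving the first part.

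For the global bound, I would consider the auxiliary function $f(x):=x^{1-1/k}\gamma_k(x)$ on $(0,\infty)$. Three observations suffice: (i) $f$ is continuous on $(0,\infty)$, since $\Gamma$ is positive and continuous there; (ii) as $x\to 0^+$, $\gamma_k(x)\to\Gamma(1/k)<\infty$ while $x^{1-1/k}\to 0$ (because $1-1/k>0$ for $k\geq 2$), so $f(x)\to 0$; (iii) as $x\to\infty$, the asymptotic above gives $f(x)\to 1$. A continuous function on $(0,\infty)$ with finite limits at both endpoints is bounded, so $A:=\sup_{x>0} f(x)<\infty$, yielding $\gamma_k(x)\leq A\, x^{-(1-1/k)}$ for every $x>0$. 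At $x=0$ the right-hand side is $+\infty$, so the inequality holds trivially.

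The only mildly technical point is the uniformity of the remainder in the Stirling-type expansion on the region $\{z\geq n\varepsilon\}$; this is standard and presents no genuine obstacle. Nothing else in the argument goes beyond elementary analysis of the Gamma function.
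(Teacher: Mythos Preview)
Your argument is correct. Both claims are handled cleanly: the quantitative remainder bound $|R_n(x)|\le C/(n\varepsilon)$ on $[\varepsilon,1]$ gives the uniform convergence immediately, and the auxiliary function $f(x)=x^{1-1/k}\gamma_k(x)$ has finite limits at both ends of $(0,\infty)$, so is bounded.

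The paper takes a slightly different route for the uniformity. Rather than invoking the quantitative form of the Stirling-type asymptotic, it establishes only the pointwise convergence from Stirling and then upgrades to uniform convergence by a monotonicity argument (in the spirit of Dini/P\'olya): it shows that $\gamma_k$ is nonincreasing on $[0,\infty)$ by differentiating and using the log-convexity of $\Gamma$, so each function $x\mapsto n^{1-1/k}\gamma_k(nx)$ is monotone in $x$, and pointwise convergence to a continuous limit on a compact interval is then automatically uniform. Your approach is more direct and yields an explicit rate, at the cost of citing the uniform $O(1/z)$ remainder in $\Gamma(z+a)/\Gamma(z+b)$; the paper's approach is more qualitative but needs nothing beyond pointwise Stirling and the monotonicity trick. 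For the global bound the two arguments are essentially the same; you just spell out what the paper compresses into one sentence.
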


\begin{proof} Pointwise convergence comes from a direct application of Stirling's formula. The uniformity of this convergence on all compact subsets of $(0,1]$ can be proved through standard monotonicity argument (sometimes known as Dini's Theorem): we only need to notice that $\gamma_k$ is a nonincreasing function of $x\geq0$. This can be done through differentiating; indeed, $\gamma_k$ is differentiable and we have, for all $x\geq0$,
	\[\gamma_k'(x)=\frac{\Gamma'(\frac{1}{k}+x)\Gamma(x+1)-\Gamma(\frac{1}{k}+x)\Gamma'(x+1)}{(\Gamma(x+1))^2}.
\]
Notice that the function $x\mapsto \Gamma'(x)/\Gamma(x)$ is nondecreasing on $(0,+\infty)$, since the Gamma function is logarithmically convex (see e.g. \cite{Art64}). Therefore, the derivative of $\gamma_k$ is indeed nonpositive. Lastly, the domination of $\gamma_k$ by a constant times  the power function $x^{-(1-1/k)}$  for all $x\geq0$ follows immediately from Stirling's formula and the fact that $\gamma_k$ is continuous on $[0,+\infty)$.
\end{proof}

\begin{lemma} 
\label{lem:beta}
The function $\beta_n$ converges uniformly to the function $x\mapsto (1-x)^{-1}$ on all compact subsets of $[0,1)$. Moreover $(1-x)\beta_n(x) \leq 1$ for all $x \in [0,1]$ and all $n \in \mathbb N$. 
\end{lemma}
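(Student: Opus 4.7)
The plan is to write each summand of $\beta_n(x)-1$ as $a_j(n,x) = \prod_{i=0}^{j-1}\frac{nx-i}{n-i}$ (with $a_j(n,x)=0$ for $j>\lfloor nx\rfloor$) and to base both halves of the lemma on the elementary observation that $\tfrac{nx-i}{n-i}\leq x$ whenever $0\leq i\leq j-1\leq \lfloor nx\rfloor-1$. This inequality is equivalent to $-i(1-x)\leq 0$, and positivity of the factor follows from $i<nx$; consequently $a_j(n,x)\leq x^j$. The second statement is then immediate: for $x\in[0,1)$,
\[
\beta_n(x) \leq 1+\sum_{j\geq 1} x^j = \frac{1}{1-x},
\]
which rearranges to $(1-x)\beta_n(x)\leq 1$, and the case $x=1$ is trivial since the left-hand side vanishes.

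For the first statement, I would first establish pointwise convergence by dominated convergence: for each fixed $j\geq 1$, once $n$ is large enough that $j\leq \lfloor nx\rfloor$, we have $a_j(n,x) = \prod_{i=0}^{j-1}(x-i/n)/(1-i/n) \to x^j$ as $n\to\infty$, and the uniform-in-$n$ bound $a_j(n,x)\leq x^j$ together with the summability of $\sum_j x^j$ for $x<1$ yields $\beta_n(x)\to (1-x)^{-1}$ for every $x\in[0,1)$.

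To upgrade to uniform convergence on a compact $[0,c]\subset[0,1)$, I would use a standard $\varepsilon/3$-splitting: choose $J$ large enough that $c^{J+1}/(1-c)<\varepsilon/3$, which simultaneously controls the tails of $\beta_n(x)$ and of $(1-x)^{-1}=\sum_{j\geq 0}x^j$ uniformly in $x\in[0,c]$; the remaining finite sum $\sum_{j=1}^J a_j(n,x)\to\sum_{j=1}^J x^j$ is then handled termwise. The only mildly delicate point is the indicator $\mathbbm{1}_{\{j\leq\lfloor nx\rfloor\}}$ at small $x$, but on $[0,J/n]$ both $a_j(n,x)$ and $x^j$ are dominated by $(J/n)^j\to 0$, while on $[J/n,c]$ the function $a_j(n,\cdot)$ is a polynomial in $x$ whose coefficients converge to those of $x^j$, giving uniform convergence on this compact interval. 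I do not foresee any real obstacle; the crux is the single bound $a_j(n,x)\leq x^j$, which plays the double role of the main inequality of the lemma and of a summable dominating term for the convergence.
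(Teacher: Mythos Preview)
Your proof is correct and shares its core with the paper's: both rest on the single inequality $a_j(n,x)\le x^j$, which immediately gives the bound $(1-x)\beta_n(x)\le 1$ and, via dominated convergence, the pointwise limit $\beta_n(x)\to(1-x)^{-1}$.

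The only divergence is in how pointwise convergence is upgraded to uniform convergence on compacts. You do it by hand with an $\varepsilon/3$ tail-splitting and a careful treatment of the indicator $\mathbbm{1}_{\{j\le\lfloor nx\rfloor\}}$ near $x=0$. The paper instead observes that each $\beta_n$ is nondecreasing in $x$ and appeals to the standard fact (a P\'olya/Dini-type result) that a sequence of monotone functions converging pointwise to a continuous limit on a compact interval converges uniformly. Your route is more self-contained and avoids invoking that auxiliary result; the paper's route is one line once the monotonicity is noted. Either way, the substantive work is the bound $a_j(n,x)\le x^j$, which you identify correctly.
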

\begin{proof} The proof works on the same principle as the previous one: since $\beta_n$ is obviously a nondecreasing function, we only need to show that the sequence converges pointwise. This is immediate for $x=0$, and will be done with the help of the dominated convergence theorem in the other cases. Note that, for all $x\in[0,1]$ and $j \in \mathbb N$
$$
\frac{nx(nx-1)\ldots(nx-j+1)}{n(n-1)\ldots(n-j+1)} \underset{n \rightarrow \infty}\longrightarrow x^j \quad \text{and} \quad \frac{nx(nx-1)\ldots(nx-j+1)}{n(n-1)\ldots(n-j+1)}\leq x^j, \ \forall n\in\N, j\leq \lfloor nx \rfloor,
$$
which is summable for $x\in [0,1)$. The dominated convergence theorem then ensures us that $\beta_n(x)$ converges to $\sum_{j=0}^{\infty}x^j=(1-x)^{-1}$ uniformly on all compact subsets of $[0,1)$.
\end{proof}

\begin{lemma}
\label{lem:tight}
The sequence of measures $n^{\frac{1}{k}}(1-s_1)\bar{q}_n(\mathrm d \mathbf{s})$ satisfies:
	\[\forall \varepsilon>0, \exists \eta>0, \forall n\in\N, \\
n^{\frac{1}{k}}\sum_{\lambda\in \mathcal C_n^k} \left(1-\frac{\lambda_1}{n}\right)q_n(\lambda) \mathbbm{1}_{\{\exists i, \lambda_i<\eta n\}}\: <\varepsilon.
\]
\end{lemma}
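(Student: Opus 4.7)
The idea is to bound $q_n(\lambda)$ pointwise by a quantity that is \emph{symmetric} in the coordinates $(\lambda_1,\ldots,\lambda_k)$ (even though $q_n$ is not), and then exploit this symmetry together with a discrete Dirichlet-type estimate to control the tail contribution.

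From formula~(\ref{qn}), Lemma~\ref{lem:beta} gives $(1-\lambda_1/n)\beta_n(\lambda_1/n)\leq 1$, and Lemma~\ref{lem:gamma} implies that $n^{1/k}/((n+1)\gamma_k(n+1))$ is bounded uniformly in $n$. The bound $\gamma_k(x)\leq Ax^{-(1-1/k)}$ of Lemma~\ref{lem:gamma}, combined with the continuity of $\gamma_k$ at $0$, upgrades to $\gamma_k(x)\leq A'(1+x)^{-(1-1/k)}$ for every $x\geq 0$ (a constant bound dominates the singularity-free piece $x\in[0,1]$, while on $[1,\infty)$ one uses $(1+x)\leq 2x$). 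Multiplying these estimates in~(\ref{qn}) yields a constant $C_k$ with
$$n^{1/k}\Bigl(1-\frac{\lambda_1}{n}\Bigr)q_n(\lambda)\leq C_k\prod_{i=1}^k(1+\lambda_i)^{-(1-1/k)}, \qquad n\geq 1,\ \lambda\in\mathcal C_n^k,$$
and the right-hand side is manifestly symmetric in $(\lambda_1,\ldots,\lambda_k)$.

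Combining the union bound $\mathbbm 1_{\{\exists i,\,\lambda_i<\eta n\}}\leq \sum_{i=1}^k \mathbbm 1_{\{\lambda_i<\eta n\}}$ with this symmetry, the left-hand side of the lemma is at most
$$C_k k\sum_{\lambda_1<\eta n}(1+\lambda_1)^{-(1-1/k)}f_{k-1}(n-\lambda_1),\quad\text{where } f_{\ell}(m):=\sum_{\mu_1+\cdots+\mu_\ell=m}\prod_{i=1}^\ell (1+\mu_i)^{-(1-1/k)}.$$
The key technical ingredient is the uniform Dirichlet-type estimate
$$f_\ell(m)\leq D_\ell\,(1+m)^{\ell/k-1}\qquad\text{for all }\ell<k\text{ and }m\geq 0,$$
which I would prove by induction on $\ell$: the base case $\ell=1$ is immediate, and the inductive step uses the standard convolution bound $\sum_{j=0}^m(1+j)^{-\alpha}(1+m-j)^{-\beta}\leq C(1+m)^{1-\alpha-\beta}$ valid for $\alpha,\beta\in(0,1)$ with $\alpha+\beta>1$ (this condition translates, at step $\ell\mapsto\ell+1$, into $(\ell+1)/k<1$, which holds throughout the induction). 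This estimate is the discrete counterpart of the Dirichlet integral~(\ref{beta}) with exponents all equal to $1/k$.

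Granting $f_{k-1}(m)\leq D_{k-1}(1+m)^{-1/k}$ and taking $\eta\leq 1/2$, one has $n-\lambda_1\geq n/2$ whenever $\lambda_1<\eta n$, hence $f_{k-1}(n-\lambda_1)\leq E_k\, n^{-1/k}$. An elementary integral comparison gives $\sum_{\lambda_1<\eta n}(1+\lambda_1)^{-(1-1/k)}\leq F_k(\eta n+1)^{1/k}$. Combining these and using the subadditivity $(a+b)^{1/k}\leq a^{1/k}+b^{1/k}$, one obtains an overall bound of the form $G_k\bigl(\eta^{1/k}+n^{-1/k}\bigr)$. For a given $\varepsilon>0$ one chooses $\eta$ so that $G_k\eta^{1/k}<\varepsilon/2$ and then takes $n$ beyond a threshold so that $G_k n^{-1/k}<\varepsilon/2$, which yields the stated uniform tail bound (the finitely many small values of $n$ being handled trivially, since each $q_n$ is then a fixed probability measure supported on a fixed finite set). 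The main obstacle is the uniform Dirichlet-type estimate on $f_{k-1}(m)$: the asymptotic $m^{-1/k}$ scaling is transparent from replacing the sum by an integral over the simplex $\s_{k-1}$, but extracting an inequality with a constant independent of $m$ requires careful handling of the boundary region $\{\mu_i\leq 1\}$ where the continuous approximation is poor, which is precisely what the inductive convolution estimate achieves.
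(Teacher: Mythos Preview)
Your argument follows the paper's route: both start from (\ref{qn}), apply Lemmas~\ref{lem:gamma} and~\ref{lem:beta} to bound $n^{1/k}(1-\lambda_1/n)q_n(\lambda)$ by a symmetric product $\prod_i(\text{power of }\lambda_i)$, and then estimate the resulting Dirichlet-type sum on the event $\{\exists i,\,\lambda_i<\eta n\}$. The execution of this last step differs. Where you use a union bound and an inductive convolution estimate on $f_\ell(m)$, the paper takes a shorter path: it passes to the ordered sector $\lambda_1\geq\cdots\geq\lambda_k$ (so the indicator becomes simply $\{\lambda_k<\eta n\}$), uses $\lambda_1\geq n/k$ to handle the first factor, and then bounds the remaining $(k-1)$-fold sum as a \emph{product} of one-dimensional sums $\sum_{j=1}^M j^{-(1-1/k)}\leq B M^{1/k}$. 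This yields $B''\eta^{1/k}$ directly, with no induction and no $n^{-1/k}$ remainder. Your inductive Dirichlet estimate is correct and more robust, but here the simpler product bound suffices.

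One genuine slip: your claim that the finitely many small values of $n$ are ``handled trivially'' is wrong. For any $\eta>0$ and, say, $n=1$, the event $\{\exists i,\,\lambda_i<\eta\}$ always occurs (some $\lambda_i$ equals~$0$), so the sum does \emph{not} tend to $0$ as $\eta\to 0$: it stabilizes at the positive value $\mathbb P(X_1^1=0)=(k-1)/(k+1)$. This is really a defect of the lemma as stated rather than of your argument---the paper's own proof quietly starts the sum at $\lambda_k=1$ and thereby drops the same boundary terms---and only the large-$n$ behaviour is used in Proposition~\ref{prop:cvmes}. Your bound $G_k(\eta^{1/k}+n^{-1/k})$ already delivers that.
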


\begin{proof}
We use (\ref{qn}). By individually bounding all the instances of $\gamma_k(x)$ by $Ax^{-(1-1/k)}$ and $(1-x)\beta_n(x)$ by $1$ we are reduced to showing
	\[\forall \varepsilon>0, \exists \eta>0, \forall n\in\N, \\
\sum_{\lambda\in\mathcal C_n^k}\prod_{i=1}^{k} \lambda_i^{-(1-1/k)}\mathbbm{1}_{\{\exists i, \lambda_i<\eta n\}}\: <\varepsilon.
\]
By virtue of symmetry, we can restrict ourselves to the case where $\lambda$ is nonincreasing. The condition $\exists i, \lambda_i<\eta n$ then boils down to $\lambda_k < \eta n$. Summation over $\lambda$ nonincreasing and in $\in\mathcal C^k_n$ is done by choosing first $\lambda_k$ then $\lambda_{k-1}$, going on until $\lambda_2$, the first term $\lambda_1$ being then implicitly defined as $n-\lambda_2-\ldots-\lambda_k$. Let $\varepsilon>0$, it is enough to find $\eta>0$ such that, for any $n\in\N$,
\[\sum_{\lambda_k=1}^{\lfloor \eta n \rfloor}\sum_{\lambda_{k-1}=\lambda_k}^{\lfloor n/(k-1)\rfloor}\ldots\sum_{\lambda_2=\lambda_3}^{\lfloor n/2 \rfloor} \mathbbm{1}_{\{\lambda_1 \geq \lambda_2\}}\prod_{i=1}^{k} \lambda_i^{-(1-1/k)} \: <\varepsilon.
\]
By using $\lambda_1\geq n/k$, we obtain
\begin{align*}
\sum_{\lambda_k=1}^{\lfloor \eta n \rfloor}\sum_{\lambda_{k-1}=\lambda_k}^{\lfloor n/(k-1) \rfloor}\ldots & \sum_{\lambda_2=\lambda_3}^{\lfloor n/2 \rfloor} \mathbbm{1}_{\{\lambda_1 \geq \lambda_2\}}\prod_{i=1}^{k} \lambda_i^{-(1-1/k)} \leq \Big(\frac{n}{k}\Big)^{-(1-1/k)}\sum_{\lambda_k=1}^{\lfloor \eta n \rfloor}  \sum_{\lambda_{k-1}=1}^{\lfloor n/(k-1)\rfloor}\ldots \sum_{\lambda_2=1}^{\lfloor n/2 \rfloor} \prod_{i=2}^{k}\lambda_i^{-(1-1/k)}.
\end{align*}
Standard comparison results between series and integrals imply that, since the function $t\mapsto  t^{-(1-1/k)}$ is nonincreasing and has an infinite integral on $[1,\infty)$, there exists a finite constant $B$ such that, for all $n\geq 1$, $\sum_{j=1}^n j^{-(1-1/k)} \leq B n^{\frac{1}{k}}$.
We thus get
\begin{align*}
\sum_{\lambda_k=1}^{\lfloor \eta n \rfloor}\sum_{\lambda_{k-1}=\lambda_k}^{\lfloor n/(k-1) \rfloor}\ldots\sum_{\lambda_2=\lambda_3}^{{\lfloor n/2 \rfloor}} \mathbbm{1}_{\{\lambda_1 \geq \lambda_2\}}\prod_{i=1}^{k} \lambda_i^{-(1-1/k)} \leq B'\eta^{\frac{1}{k}} n^{-(1-1/k)} \eta^{\frac{1}{k}} (n^{\frac{1}{k}})^{k-1} \leq B'' \eta^{\frac{1}{k}}
\end{align*}
where $B'$ and $B''$ are finite constants.
Choosing $\eta\leq (B'')^{-k}\varepsilon$ makes our sum smaller than $\varepsilon$ for all choices of $n$.
\end{proof}
\bigskip

\subsubsection{Proof of Proposition \ref{prop:cvmes} and Corollary \ref{coro:cvmes}}
\label{sec:proofcv}

\noindent \textit{Proof of Proposition \ref{prop:cvmes}.} First note that since $(1-s_1)\nu_k(\mathrm d \mathbf s)$ is a finite measure on $\mathcal S_k$ and since $\nu_k( \exists i:s_i=0)=0$, for all $\varepsilon>0$ there exists a $\eta>0$  such that $\int_{\mathcal S_k} (1-s_1) \mathbbm 1_{\{\exists i :s_i <\eta\}}\nu_k(\mathrm d \mathbf{s}) < \varepsilon$. Together with Lemma \ref{lem:tight}, this implies that Proposition \ref{prop:cvmes} will be proved once we have checked that
\[n^{\frac{1}{k}}\int_{\s_k} (1-s_1)f(\mathbf{s})\prod_{i=1}^k\mathbbm 1_{\{s_i \geq \eta\}} \bar{q}_n(\mathrm d \mathbf{s}) \underset{n \rightarrow \infty} \longrightarrow \int_{\s_k}(1-s_1)f(\mathbf{s})\prod_{i=1}^k\mathbbm 1_{\{s_i \geq \eta\}}\nu_k(\mathrm d\mathbf{s})                            
\]
for all $\eta>0$ and all continuous functions $f$ on $\s_k$ . In the following, we fix such a real number $\eta>0$ and a function $f$. Using the expression (\ref{qn}) and Lemmas \ref{lem:gamma} and \ref{lem:beta}, we see that 
\begin{eqnarray*}
n^{\frac{1}{k}}\int_{\s^k} (1-s_1)f(\mathbf{s})\prod_{i=1}^k\mathbbm 1_{\{s_i \geq \eta\}} \bar{q}_n(\mathrm d \mathbf{s}) \underset{n \rightarrow \infty}\sim \frac{n^{1-k}}{k \Gamma(\frac{1}{k})^{k-1}}  \sum_{\lambda \in \mathcal C_n^k} f\left( \frac{\lambda}{n}\right) \prod_{i=1}^k \left(\frac{\lambda_i}{n}\right)^{-(1-1/k)}\mathbbm{1}_{\{\lambda_i \geq \eta n\}}.
\end{eqnarray*}
We conclude by noticing that this last term is  in fact a Riemann sum of a (Riemann) integrable function on $[0,1]^{k-1}$: to sum over $\lambda\in\mathcal C^k_n$, we only need to choose $\lambda_1,\ldots,\lambda_{n-1}$ in $\{0,...,n\}$ such that $n-(\lambda_1 +\ldots+\lambda_{n-1})\geq0$. Standard results on Riemann sums then imply that it converges towards  the integral $$\int_{\mathcal S_k} (1-s_1)f (\mathbf s)\prod_{i=1}^k \mathbbm{1}_{\{s_i \geq \eta\}} \nu_k(\mathrm d \mathbf s).$$
The convergence of the decreasing versions of the measures follows immediately. A continuous function $f$ on $\mathcal S_k$ being fixed, we let $g_f$ be the function  defined on $\s_k$ by $g_f(\mathbf{s})=(1-s_1^{\downarrow})f(\mathbf{s}^{\downarrow})/(1-s_1)$. The function $g_f$ is then continuous and bounded on $\s_k$ (there is no singularity when $s_1=1$ since $s_1^{\downarrow}=s_1$ as soon as $s_1\geq 1/2$). By the first part of this proof, we then have 
\begin{eqnarray*}
n^{\frac{1}{k}} \int_{\s_k}(1-s_1)f(\mathbf{s})\bar{q}_n^{\downarrow}(\mathrm d \mathbf{s}) &=& n^{\frac{1}{k}} \int_{\s_k} (1-s_1^{\downarrow})f(\mathbf{s}^{\downarrow})\bar{q}_n(\mathrm d \mathbf{s}) =
	n^{\frac{1}{k}} \int_{\s_k} (1-s_1)g_f(\mathbf{s})d\bar{q}_n(\mathbf{s}) \\
	&\underset{n\to\infty}\rightarrow&  \int_{\s_k} (1-s_1)g_f(\mathbf{s})\nu_k(\mathrm d \mathbf{s}) = \int_{\s_k}(1-s_1)f(\mathbf{s})\nu_k^{\downarrow}(\mathrm d\mathbf{s}).
\end{eqnarray*}
$\hfill \square$

\bigskip

\noindent \textit{Proof of Corollary \ref{coro:cvmes}}. Let $f$ be a  continuous function on $\mathcal S_{k',\leq}$ and assume for the moment that $k' \leq k-2$.
Applying first Proposition \ref{prop:cvmes} and then the identity (\ref{beta}), we get
\begin{align*}
& n^{1/k} \int_{\mathcal S_{k',\leq}} f(\mathbf s) (1-s_1)\overline q_n'(\mathrm d \mathbf s) \underset{n \rightarrow \infty} \longrightarrow  \frac{1}{k(\Gamma(\frac{1}{k}))^{k-1}} \int_{\mathcal S_k} f(s_1,...,s_k') \prod_{i=1}^k s_i^{-(1-1/k)}\mathrm d \mathbf s \\
& = \frac{1}{k(\Gamma(\frac{1}{k}))^{k-1}}\int_{S_{k',\leq}} f(\mathbf s) \prod_{i=1}^{k'} s_i^{-(1-1/k)}\left(\int_{\big(0,1-\sum_{i=1}^{k'}s_i\big]^{k-1-k'}}  \prod_{i=k'+1}^k s_i^{-(1-1/k)} \mathrm d s_{k'+1} \dots \mathrm d s_{k-1}\right) \mathrm d \mathbf s \\
& = \frac{1}{k(\Gamma(\frac{1}{k}))^{k-1}}\int_{S_{k',\leq}} f(\mathbf s) \prod_{i=1}^{k'} s_i^{-(1-1/k)}\left(\frac{\Gamma(\frac{1}{k})^{k-k'}}{\Gamma\left(1-k'/k\right)} \left(1-\sum_{i=1}^{k'}s_i\right)^{-k'/k} \right) \mathrm d \mathbf s, 
\end{align*}
which gives the result for $k' \leq k-2$. For $k'=k-1$ the calculation is more direct since we do not need (\ref{beta}). Finally, the convergence of decreasing measures follows immediately by mimicking the end of the proof of Proposition \ref{prop:cvmes}.
$\hfill \square$

%%%%%%%%%%%%%%%%%%%%%%%%%%%%%%%%%%%%%%%%%
\subsection{Markov branching property and identification of the limit}
\label{sec:conclusion}
%%%%%%%%%%%%%%%%%%%%%%%%%%%%%%%%%%%%%%%%%

\begin{prop}[Markov branching property] 
\label{propMB}
Let $n\in\mathbb Z_+$. Conditionally on $(X^i_n)_{i\leq k}$, the $(T^i_n)_{i\leq k}$ are mutually independent and, for $i\leq k$, $T^i_n$ has the same law as $T_{X^i_n}$.
\end{prop}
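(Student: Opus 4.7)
\emph{Proof plan.} The plan is to induct on $n$, exploiting the uniformity of edge selection and the recursive construction. The base case $n = 0$ is immediate, since $T_1$ consists of one internal node carrying $k$ single-edge subtrees, so all $X^i_0 = 0$ and each $T^i_0$ deterministically equals $T_0$.

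For the inductive step, let $B_n$ denote the first internal node of $T_n$, write $(S^i)_{i \leq k}$ for its subtrees and $(Y^i)_{i \leq k}$ for their sizes; the inductive hypothesis applied to $T_n$ gives, conditionally on $(Y^i)$, that the $(S^i)$ are independent with $S^i \sim T_{Y^i}$. I will decompose the one-step transition $T_n \to T_{n+1}$ according to which of the $kn+1$ edges is selected. In case (A), occurring with probability $1/(kn+1)$, the root edge of $T_n$ is split, producing a new first internal node $B_{n+1}$; the labeling rule then forces $T^1_{n+1}$ (with its root-edge label discarded) to be isomorphic to $T_n$ and the $T^i_{n+1}$ for $i \geq 2$ to be single edges, so $(X^i_{n+1}) = (n, 0, \dots, 0)$. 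In case (B$j$), occurring with conditional probability $(kY^j + 1)/(kn+1)$, an edge of $S^j$ is split; then $B_{n+1} = B_n$, $T^i_{n+1} = S^i$ for $i \neq j$, and $T^j_{n+1}$ is the result of one growth step applied to $S^j$. The key observation is that uniform selection among the $kn+1$ edges of $T_n$, conditioned on the chosen edge lying in $S^j$, is still uniform on the $kY^j + 1$ edges of $S^j$; combined with the Markov property of the chain $(T_m)$, this yields $T^j_{n+1} \sim T_{Y^j + 1}$ conditionally on $Y^j$.

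Given this decomposition, for any tuple of events $(A^i)$ and any target sizes $(x^i)$ summing to $n$, I will compute $\mathbb{P}(T^i_{n+1} \in A^i \ \forall i,\ (X^i_{n+1}) = (x^i))$ by summing over the contributing cases: only case (B$j$) with $Y^j = x^j - 1$ and $Y^i = x^i$ for $i \neq j$ contributes whenever $(x^i) \neq (n, 0, \dots, 0)$, whereas both case (A) and case (B$1$) contribute at the boundary configuration $(n, 0, \dots, 0)$. In each contributing term, the inductive conditional independence together with the identifications $S^i \sim T_{Y^i}$ and $T^j_{n+1} \sim T_{Y^j + 1}$ lets me pull out the factor $\prod_{i=1}^k \mathbb{P}(T_{x^i} \in A^i)$, leaving behind weights that combine into $\mathbb{P}((X^i_{n+1}) = (x^i))$. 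Dividing yields the desired product formula, hence conditional independence and the correct marginals.

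The main obstacle will be the careful bookkeeping of labels in case (A): I must verify that $T^1_{n+1}$, once its root-edge label is dropped, really is isomorphic to $T_n$ as a rooted labeled tree, with the children of $B_n$ keeping their $T_n$-labels $1, \dots, k$, so that the inductive hypothesis applies to it verbatim. A related check is that, at the boundary configuration $(n, 0, \dots, 0)$, cases (A) and (B$1$) combine coherently so that the factorization still holds there. Once these label bookkeeping points are settled, what remains is essentially an arithmetic identity.
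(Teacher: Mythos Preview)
Your proposal is correct and follows essentially the same approach as the paper: both argue by induction on $n$, decompose the transition $T_n\to T_{n+1}$ according to whether the selected edge is the root edge or lies in the $j$-th subtree (your cases (A) and (B$j$) are exactly the paper's cases $J=0$ and $J=j$), and use that a uniform edge conditioned to lie in a given subtree is uniform there. The paper phrases the final step as computing the law conditional on $J$ and $(X^i_n)$ and then dropping $J$, whereas you sum directly over contributing cases and pull out the product $\prod_i \mathbb{P}(T_{x^i}\in A^i)$; these are equivalent presentations, and your explicit treatment of the boundary configuration $(n,0,\dots,0)$ and the label bookkeeping in case (A) matches the paper's observation that in that case $(T^i_n)_{i\le k}=(T_n,T_0,\dots,T_0)$.
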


\begin{proof} We prove this statement by induction on $n\in\mathbb Z_+$. Starting with $n=0$, we have $X^i_0=0$ and $T^i_0=T_0$ for all $i$, everything is deterministic.

Assume now that the Markov branching property has been proven up until some integer $n-1$, and let us prove it for $n$. Let $e$ be the random selected edge of $T_{n-1}$ used to build $T_{n}$ and let $J$ be the random variable defined by: $J=j$ if $e$ is an edge of $T^j_{n-1}$, $j\leq k$, and $J=0$ if $e$ is the edge adjacent to the root of $T_n$. Note that $J$  and $T_n$ are independent conditionally  on $(X^i_{n-1})_{i\leq k}$. Let us then determine the law of $(T^i_n,i\leq k)$ conditionally on $J$ and $(X^i_n)_{i\leq k}$. 

\smallskip

If $J=j\neq0$ then $(T^i_{n})_{i\leq k,i\neq j}$ is the same sequence as $(T^i_{n-1})_{i\leq k,i\neq j}$ and we have added an extra edge to $T^j_{n-1}$. Hence,
for all $j \leq k$ and $(x_1,...,x_k) \in \mathcal C_n^k$, with $x_j \geq 1$, we have for all $k$-uplet of rooted $k$-ary trees $(t_1,...,t_k)$ with respectively $x_1,...,x_k$ internal nodes,
\begin{eqnarray*}
&&\mathbb P\left(T_{n}^i=t_i,  1 \leq i \leq k \  | \ X_{n}^i=x_i, 1 \leq i \leq k, J=j\right) \\
&=& \mathbb P\left(T_{n-1}^i=t_i,  1 \leq i \leq k, i \neq j, T_{n}^j=t_j \  | \ X_{n-1}^i=x_i, 1 \leq i \leq k, i \neq j, J=j\right) \\
&=& \mathbb P\left(T_{n-1}^i=t_i,  1 \leq i \leq k, i \neq j \  | \ X_{n-1}^i=x_i, 1 \leq i \leq k, i \neq j, J=j\right) \mathbb P\left(T_{x_j}=t_j\right) \\
&=& \textstyle \prod_{i=1}^k \mathbb P\left(T_{x_i}^i=t_i \right)
\end{eqnarray*}
where we have used that a conditioned uniform variable is uniform in the set of conditioning to get the second equality  and then that $J$  and $T_n$ are independent conditionally  on $(X^i_{n-1})_{i\leq k}$, together with the Markov branching property at $n-1$, to get the third equality.

When $J=0$, $(T^i_{n})_{i\leq k}=(T_n,T_0,\ldots,T_0)$. Since $T_0$ is deterministic and the event $\{J=0\}$ is independent of $T_n$, the distribution of $(T^i_{n})_{i\leq k}$ conditional on $J=0$ and $(X^i_n)_{i\leq k}=(n,0,...,0)$ is indeed the same as that of the $k$-uplet of independent random variables $(T_n,T_0,\ldots,T_0)$.

Finally, since the distribution of $(T^i_{n})_{i\leq k}$ conditionally on $J$ and $(X^i_{n})_{i\leq k}$ is independent of $J$, one can remove $J$ in the conditioning, which ends the proof.
\end{proof}

\bigskip

We now have the material to prove the convergence in distribution of $n^{-1/k}T_n$ and identify its limit as a fragmentation tree.

\bigskip

\noindent \textbf{Proof of Theorem \ref{thmain} (convergence in distribution part).} Theorem 5 of \cite{HM12} concerns  sequences of Markov branching trees indexed by their number of leaves, however our sequence $(T_n)$ is indexed by the number of internal nodes of the tree. This is not a real problem since $T_n$ has $1+(k-1)n$ leaves for all $n$,  the sequence $(T_n)_{n \in \mathbb N}$ can be seen as a sequence of Markov branching trees $(T_p^{\circ})_{p\in1+(k-1)\mathbb N}$ indexed by their number of leaves. 
For all $p\in1+(k-1)\mathbb N$, we let $\bar q_p^{\circ}$ denote its associated splitting distribution, that is, if $p=1+(k-1)n$, $\bar q_p^{\circ}$ is the distribution on  $\mathcal S_k$ of the sequence 
$$
\left(\frac{1+(k-1)X_{n-1}^i}{1+(k-1)n} \right)_{i \leq k}.
$$
As an immediate consequence of Proposition \ref{prop:cvmes}, we have that
\begin{equation}
\label{cvqncirc}
(k-1)^{-1/k}p^{1/k} (1-s_1)\bar{q}_p^{\circ,\downarrow}(\mathrm d \mathbf{s})\underset{\underset{p \in 1+(k-1)\mathbb N}{p \rightarrow \infty}}\Rightarrow (1-s_1)\nu_k^{\downarrow}(\mathrm d \mathbf{s}).
\end{equation}
Indeed, for any bounded Lipschitz function $f:\mathcal S_k \rightarrow \mathbb R$,  let $g_f:\mathcal S_k \rightarrow \mathbb R$ be defined by $g_f(\mathbf s)=(1-s_1)f(\mathbf s)$. Then $g_f$  is also Lipschitz, say with Lipschitz constant $c_g$. It is then easy to see that
$$
n^{1/k} \left| \mathbb E \Bigg[ g\Bigg( \left(\frac{1+(k-1) X^i_{n-1}}{1+(k-1)n}\right)^{\downarrow}\Bigg) \Bigg] - \mathbb E \Bigg[g\Bigg(\Bigg(\frac{X^i_{n-1}}{n-1} \Bigg)^{\downarrow}\Bigg) \Bigg]\right| \leq \ \frac{ n^{1/k}2k c_g}{1+(k-1)n} \ \underset{n \rightarrow \infty} \rightarrow 0.
$$
Together with  Proposition \ref{prop:cvmes} this immediately leads to (\ref{cvqncirc}).

Hence the sequence $(T_p^{\circ})_{p\in1+(k-1)\mathbb N}$ is Markov branching with a splitting distribution sequence $(\bar q_p^{\circ})$ satisfying (\ref{cvqncirc}). This is exactly the hypotheses we need to apply Theorem 5 of \cite{HM12}, except that this theorem is stated for sequences of Markov branching  trees indexed by the full set $\mathbb N$, not by one of its subsets. However, without any modifications, it could easily be adapted to that setting. Hence we obtain from this theorem that
$$
\left((k-1)^{1/k}p^{-1/k}T^{\circ}_p, \mu_p^{\circ}\right) \underset{\underset{p \in 1+(k-1)\mathbb N}{p \rightarrow \infty}}\longrightarrow (\mathcal T_k,\mu_k)
$$ 
where $\mu_p^{\circ}$ is the uniform probability on the leaves of $(T_p^{\circ})$ and $(\mathcal T_k,\mu_k)$ the fragmentation tree of Theorem \ref{thmain}.
This convergence holds in distribution, for the GHP topology. Otherwise said, $(n^{-1/k}T_n)$ endowed with the uniform probability on its leaves converges in distribution towards $(\mathcal T_k,\mu_k)$.
$\hfill \square$

\bigskip

%%%%%%%%%%%%%%%%%%%%%%%%%%%%%%%%%%%%%%%%%%%%%%%%%%%%%%%
\section{Convergence in probability and joint convergence}
\label{sec:cvproba}
%%%%%%%%%%%%%%%%%%%%%%%%%%%%%%%%%%%%%%%%%%%%%%%%%%%%%%%
This section is dedicated to improving the convergence in distribution we have just obtained. We will construct the limiting tree in the space $\ell^1$ of summable real-valued sequences (equipped with its usual metric $d_{\ell^1}$), and convergence will be proved by using subtrees akin to finite-dimensional marginals. The almost sure convergence of these marginals can be proved using urn schemes and results concerning Chinese restaurant processes, as studied by Pitman in \cite[Chapter 3]{PitmanStFl}. Tightness properties will extend this to the convergence of $(n^{-1/k}T_n,\mu_n)$. Unfortunately, almost sure convergence is lost by this method and we are left with convergence in probability. Also, due to some technical issues, we first have to study the Gromov-Hausdorff convergence of the non-measured trees before adding the measures.

%%%%%%%%%%%%%%%%%%%%%%%%%%%%%%%%%%
\subsection{Finite-dimensional marginals and the limiting tree}
\label{sec:marginals}
%%%%%%%%%%%%%%%%%%%%%%%%%%%%%%%%%%

In this section we will need to define an ordering of the leaves of $T_n$ for $n\in\Z_+$, calling them $(L_n^i)_{1\leq i\leq (k-1)n+1}$. They are labelled by order of apparition: the single leaf of $T_0$ is called $L_0^1$, while, given $T_n$ and its leaves, the leaves $L_{n+1}^{1},\ldots,L_{n+1}^{(k-1)n+1}$ of $T_{n+1}$ are those inherited from $T_n$, and the leaves $L_{n+1}^{(k-1)n+2}, \ldots,L_{n+1}^{(k-1)n+k}$ are the leaves at the ends of the new edges labelled $2,3,\ldots,k$ respectively.

Let $p\in\Z_+$. For all $n\geq p$, consider the subtree $T^p_n$ of $T_n$ spanned by the root and all the leaves $L^i_n$ with $i\leq (k-1)p+1$. The tree $T^p_n$ has the same graph structure as $T_p$, however the metric structure isn't the same: the distance between two vertices of $T^p_n$ is the same as the distance between the corresponding vertices of $T_n$. The study of the sequence $(T^p_n)_{n\geq p}$ for all $p$ will give us much information on the sequence $(T_n,\mu_n)_{n\in\Z_+}$.

\begin{prop}
\label{prop:cvmarginalesGH}
Let $p\in\Z_+$. We have, in the Gromov-Hausdorff sense, as $n$ goes to infinity:
\begin{equation}
\frac{T^p_n}{n^{1/k}}  \overset{\mathrm{a.s.}}\longrightarrow \T^p,
\end{equation}
where $\T^p$ is a rooted compact $\R$-tree with $(k-1)p+1$ leaves which we will call $(L^i)_{i\leq (k-1)p+1}$. Under a suitable embedding in $\ell^1$, for $p'<p$, $\T^{p'}$ is none other than the subtree of $\T^p$ spanned by the root and the leaves $L^i$ for $i\leq (k-1)p'+1$, making this notation unambiguous.
\end{prop}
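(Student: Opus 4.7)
The plan is to extract an explicit urn-scheme description of $T^p_n$. Combinatorially $T^p_n$ is isomorphic to $T_p$ (since the first $(k-1)p+1$ leaves already span $T_p$), so it has $kp+1$ edges whose integer lengths in $T_n$ I denote $\ell_i^n$, with $L^p_n = \sum_i \ell_i^n$. Examining the step $T_n \to T_{n+1}$: conditionally on $\mathcal{F}_n$, the length $\ell_i^n$ increases by $1$ with probability $\ell_i^n/(kn+1)$ (the selected edge of $T_n$ lies within the $i$-th graph-edge of $T^p_n$, while the $k-1$ new leaves created lie outside $T^p_{n+1}$), and otherwise $T^p_{n+1} = T^p_n$. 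So $(\ell_i^n)_{1 \leq i \leq kp+1}$ evolves as a generalized P\'olya urn on $kp+1$ colors with a sink that absorbs the edges of $T_n \setminus T^p_n$.

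First I would show that $L^p_n/n^{1/k} \to \Lambda^p$ almost surely for some positive $\Lambda^p$. The identity $\mathbb{E}[L^p_{n+1} \mid \mathcal{F}_n] = L^p_n(1 + 1/(kn+1))$ exhibits $L^p_n/a_n$ as a non-negative martingale, where $a_n = \prod_{m=p}^{n-1}(1+1/(km+1)) \sim c_p\, n^{1/k}$. A direct bound on $\mathbb{E}[(L^p_{n+1})^2 \mid \mathcal{F}_n]$ shows this martingale is bounded in $L^2$, hence converges a.s.\ and in $L^2$, with expectation $kp+1 > 0$; strict positivity of the limit a.s.\ is obtained either through a zero-one argument or directly from the P\'olya-urn description on the success subsequence below.

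Next I would analyze the proportions. Let $\tau_1 < \tau_2 < \ldots$ be the successful steps at which $L^p$ increases; conditionally on $\tau_m$ being successful, color $i$ is chosen with probability $\ell_i^{\tau_m-1}/L^p_{\tau_m-1}$. So along $(\tau_m)$, $(\ell_i^{\tau_m})_i$ is a classical P\'olya urn with $kp+1$ colors each starting at $1$, yielding almost-sure convergence of $(\ell_i^{\tau_m}/L^p_{\tau_m})_i$ to a $\mathrm{Dirichlet}(1,\ldots,1)$ vector $(Y_i^p)_i$ (this is also covered by the urn-scheme treatment in \cite{PitmanStFl}). Since Step~2 gives $L^p_n \to \infty$ a.s., sandwiching $n$ between consecutive success times and using monotonicity of $\ell_i^n$ transfers this to the full sequence: $\ell_i^n/L^p_n \to Y_i^p$ a.s.\ for each $i$, and combining with Step~2, $\ell_i^n/n^{1/k} \to \Lambda^p Y_i^p$ almost surely.

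It remains to assemble these limits in a way compatible with varying $p$. I would fix, recursively in $p$, a faithful embedding of each $T_p$ into $\ell^1$ that assigns distinct coordinate directions to its edges and that realizes the natural inclusion $T^{p-1}_n \subset T^p_n$ as an honest subset relation; assigning the $i$-th edge of $T_p$ length $n^{-1/k}\ell_i^n$ then embeds $n^{-1/k}T^p_n$ into $\ell^1$. By the convergence above, these embedded trees converge almost surely in the Hausdorff sense to the tree $\T^p$ with combinatorial structure $T_p$ and edge lengths $(\Lambda^p Y_i^p)_i$; a.s.\ positivity of every $Y_i^p$ ensures $\T^p$ has exactly $(k-1)p+1$ leaves, and the consistency of the embedding gives $\T^{p'} \subset \T^p$ for $p'<p$. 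The main technical obstacle I foresee is the transfer in Step~3 from the success-time subsequence back to the natural time scale: it requires both the a.s.\ divergence of $L^p_n$ and a careful sandwich argument using the monotonicity of $\ell_i^n$ and enough control on the gaps $\tau_{m+1}-\tau_m$ to ensure the limit is unchanged.
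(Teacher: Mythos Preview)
Your approach is correct and genuinely different in organization from the paper's. The paper does not track individual edge lengths; instead it shows directly that each rescaled pairwise distance $n^{-1/k}d(\rho,L^i_n)$ and $n^{-1/k}d(L^i_n,L^j_n)$ converges a.s., by exhibiting each such distance as one plus the number of tables in a $(1/k,1/k)$ Chinese restaurant process (tables indexed by the internal vertices on the corresponding segment, clients being the internal nodes of $T_n$ that branch off at that vertex), and then invoking Pitman's Theorem~3.8. Your edge-by-edge decomposition $\ell_i^n/n^{1/k}=(\ell_i^n/L^p_n)(L^p_n/n^{1/k})$ with a classical P\'olya urn for the first factor is a nice alternative, and in fact gives extra information the paper does not extract (the Dirichlet law of the limiting proportions). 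Two comments, though. First, your anticipated ``main technical obstacle'' is a non-issue: between consecutive success times both $\ell_i^n$ and $L^p_n$ are frozen, so $\ell_i^n/L^p_n$ is constant there and the transfer from $(\tau_m)$ to the full sequence is immediate once $L^p_n\to\infty$ a.s.\ (which follows from $\sum_n L^p_n/(kn+1)\geq\sum_n 1/(kn+1)=\infty$ and conditional Borel--Cantelli). Second, your justification of $\Lambda^p>0$ a.s.\ is the one genuinely loose point: neither a zero--one argument nor the P\'olya urn on successes delivers this directly. The clean fix is precisely the paper's observation: $L^p_n-(kp+1)$ is the number of tables in a $(1/k,p+1/k)$ Chinese restaurant process started at time $p$, so Pitman's theorem gives $L^p_n/n^{1/k}\to$ a generalized Mittag--Leffler variable, which is a.s.\ positive. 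Finally, for the $\ell^1$ embedding, assigning one coordinate direction per \emph{edge} of $T_p$ will not be consistent across $p$ (the edge of $T_{p-1}$ that gets split corresponds to two edges of $T_p$ lying on the same line); the paper uses the leaf-by-leaf stick-breaking embedding instead, which you should adopt.
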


\begin{proof} The proof hinges on our earlier description of $T_n^p$ for $n\geq p$: it is the graph $T_p$, but with distances inherited from $T_n$. As explained in Lemma \ref{lem:cvGHarbresdiscrets}, we only need to show that, for $i$ and $j$ smaller than $(k-1)p+1$, both $n^{-1/k} d(L^i_n,L^j_n)$ and  $n^{-1/k} d(\rho,L^i_n)$ have finite limits as $n$ goes to infinity. We first concentrate on the case of $n^{-1/k}d(\rho,L^1_n)$. This could be done by noticing that $(d(\rho,L^1_n))_{n \geq 0}$ is a Markov chain and using martingale methods, however, in view of what will follow, we will use the theory of Chinese restaurant processes.

For $n\in\N$, we consider a set of tables indexed by the vertices of $T_n$ which are strictly between $\rho$ and $L^1_n$. We then let the number of clients on the table indexed by a vertex $v$ be the number of internal nodes $u$ of $T_n$ such that $v$ is the branch point of $u$ and $L_1^n$ (including the case $u=v$).

\begin{figure}
\centering

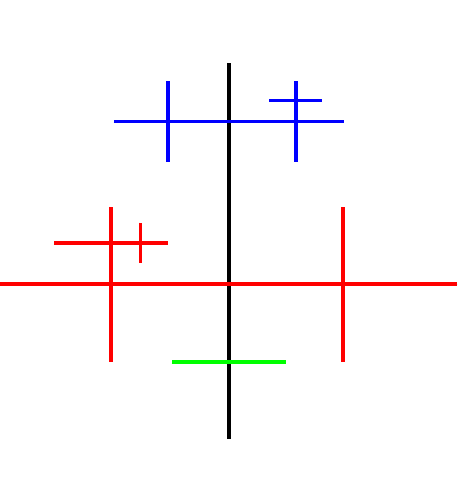
\caption{Colour-coding of the tables of $T_{10}$ (here $k=3$). The green table has one client, the red table has five and the blue table has four.}
\end{figure}

Let us check that this process is part of the two-parameter family introduced by Pitman in \cite{PitmanStFl}, Chapter 3, with parameters $(1/k,1/k)$. Indeed, assume that, at time $n\in\N$, we have $l\in\N$ tables with respectively $n_1,\ldots,n_l$ clients (the tables can be ordered by their order of apparition in the construction). For any $i\leq l$, table $i$ corresponds to a subset of $T_n$ with $kn_i-1$ edges, thus there is a probability of $(kn_i-1)/(kn+1)$ that the next client comes to this table. This next client will sit a a new table if the selected edge is between $\rho$ and $L^1_n$, an event with probability $(l+1)/(kn+1)$.

Since, for all $n$, $d(\rho,L^1_n)$ is equal to the number of tables plus one, Theorem 3.8 of \cite{PitmanStFl} tells us that $n^{-1/k}d(\rho,L^1_n)$ converges almost surely towards a $(1/k,1/k)$-generalized Mittag-Leffler random variable (the definition of generalized Mittag-Leffler distributions is recalled in Section \ref{sec:ML}, however we will not need here the exact distribution of this limit). The cases of $d(\rho,L^i_n)$ and $d(L^i_n,L^j_n)$ for $i\neq j$ can be treated very much the same way: the main difference is that the tables of the restaurant process are now indexed by the nodes between $L^i_n$ and $L^j_n$, and they have a non-trivial initial configuration. Lemma \ref{lem:cvGHarbresdiscrets} finally implies that $n^{-1/k}T_n^p$ does converge a.s. to a tree with $(k-1)p+1$ leaves in the Gromov-Hausdorff sense.

The trees $(\mathcal T^p, p\in \mathbb Z_+)$ can be embedded in $\ell^1$ as a growing sequence of trees using the so-called stick-breaking method of \cite{Ald93}, Section 2.2, by sequentially adding each leaf. Let us explain this in reasonable detail. First, we embed the root $\rho$ as the null vector $\big(0,0,\ldots)$ and the first leaf $L^1$ is as $\big(ht(L^1),0,\ldots\big)$. Once the leaves $L^1,\ldots,L^{i-1}$, with $i \geq 2$, have been embedded in $\ell^1$, in order to add the $i$-th leaf $L^i$, first locate the point $H^i$ where the path $[[\rho,L^i]]$ splits off from the subtree containing the root and all the leaves $L^j$ with $j<i$. This point is of course already embedded in $\ell^1$, so we can continue the embedding by adding from that point a line segment following the $i$-th coordinate of length $d(H^i,L^i)$, which completes the embedding of $[[\rho,L^i]]$.
\end{proof}

Under this embedding in  $\ell^1$ we let
	\[\T=\overline{\cup_{p=0}^{\infty}\T^p}, 
\]
which is also an $\R$-tree. We will see in Lemma \ref{lem:cvdesarbreslimites} and Proposition \ref{prop:cvprobaGH} that this tree is compact and is the limiting tree for $n^{-1/k} T_n$, the tree which was called $\T_k$ in the introduction.

%%%%%%%%%%%%%%%%%%%%%%%%%%%%%%%%%%
\subsection{A tightness property}
\label{sec:tightness}
%%%%%%%%%%%%%%%%%%%%%%%%%%%%%%%%%%
To move from the convergence of $T^p_n$ for all $p\in\N$ to the convergence of $T_n$, we need some kind of compactness to not be bothered by the choice of $p$, which the following proposition gives.

\begin{prop}
\label{prop:tightness}
For all $\varepsilon>0$ and $\eta>0$, there exists an integer $p$ such that, for $n$ large enough,
	\[\mathbb{P}\big(d_{\mathrm{GH}}(T^p_n,T_n)>n^{\frac{1}{k}}\eta\big) <\varepsilon.
\]
The same is then true if we replace $p$ by any greater integer $p'$.
\end{prop}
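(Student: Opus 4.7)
My plan is to bound $d_{\mathrm{GH}}(T^p_n, T_n)$ above by the maximum height of a subtree of $T_n$ hanging off $T^p_n$, and then control this maximum by combining the Markov branching property with a uniform tail estimate on rescaled heights.

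Since no existing vertex is modified by the grafting rule, every leaf of $T_p$ remains a leaf of $T_n$, so $T^p_n$ embeds isometrically in $T_n$ and $T_n\setminus T^p_n$ splits into a disjoint union of subtrees $S_n^{(1)}, \ldots, S_n^{(N_{p,n})}$, each attached to $T^p_n$ at a single interior vertex of a $T^p_n$-path (no subtree hangs off a leaf of $T^p_n$). Writing $m_i$ for the number of internal nodes of $S_n^{(i)}$, this will give
$$
d_{\mathrm{GH}}(T^p_n, T_n) \leq d_{\mathrm H}(T^p_n, T_n) = \max_{1 \leq i \leq N_{p,n}} \mathrm{ht}(S_n^{(i)}),
$$
and iterating Proposition \ref{propMB} along $T^p_n$ will show that, conditionally on $(m_i)_i$, the $S_n^{(i)}$ are mutually independent with each $S_n^{(i)}$ distributed as $T_{m_i}$.

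I then need two further ingredients. (a)~A uniform polynomial tail on rescaled heights: the convergence in distribution $n^{-1/k}T_n\Rightarrow \mathcal T_k$ of Section \ref{sec:cvd} gives $m^{-1/k}\mathrm{ht}(T_m)\Rightarrow \mathrm{ht}(\mathcal T_k)$, and combined with the Chinese restaurant / Mittag--Leffler asymptotics underlying Proposition \ref{prop:cvmarginalesGH} (generalized Mittag--Leffler variables have moments of every order), I expect a bound $\mathbb P(\mathrm{ht}(T_m)>Cm^{1/k}) \leq K/C^\alpha$ uniform in $m\geq 1$, for some $\alpha>k$. (b)~Control of the largest hanging subtree: for every $\delta>0$, $\lim_{p\to\infty}\limsup_{n\to\infty}\mathbb P(\max_i m_i > \delta n) = 0$. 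This should follow because $T^p_n$ performs $p$ rounds of the Markov branching decomposition, and at each round Proposition \ref{prop:cvmes} yields an asymptotic splitting whose largest coordinate is almost surely strictly less than $1$; iterating, the maximum relative subtree size tends to $0$ in probability.

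Combining: given $\eta,\varepsilon>0$, using (b) I pick $p$ so that $\mathbb P(\max_i m_i > \delta n) < \varepsilon/2$ for large $n$, with $\delta$ chosen small enough. On the complementary event, (a) gives $\mathbb P(\mathrm{ht}(S_n^{(i)}) > \eta n^{1/k}\mid m_i) \leq K\eta^{-\alpha}(m_i/n)^{\alpha/k}$, and summing over $i$ with $\sum_i m_i \leq n$, $m_i\leq \delta n$, and $\alpha/k>1$ produces
$$
\sum_i K\eta^{-\alpha}(m_i/n)^{\alpha/k} \;\leq\; K\eta^{-\alpha}\,\delta^{\alpha/k-1} < \varepsilon/2
$$
for $\delta$ small, yielding the desired bound. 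Monotonicity in $p$ is immediate from $T^{p'}_n\supseteq T^p_n$, which implies $d_{\mathrm H}(T^{p'}_n,T_n)\leq d_{\mathrm H}(T^p_n,T_n)$. The hard part will be ingredient (b): converting the weak convergence of Proposition \ref{prop:cvmes} into a uniform-in-$n$ estimate that survives $p$ iterations of the Markov branching recursion.
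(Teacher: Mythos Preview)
Your overall architecture matches the paper's proof: bound $d_{\mathrm{GH}}(T^p_n,T_n)$ by the maximal height of the subtrees hanging off $T^p_n$, use the Markov branching structure to identify the conditional law of each such subtree given its size, invoke a uniform polynomial tail on rescaled heights (the paper quotes Lemma~33 of \cite{HM12}, valid for \emph{every} exponent $q>0$, for exactly this purpose), and then control the sum $\sum_i (m_i/n)^{q/k}$. Your combining step and the monotonicity in $p$ are both correct.

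The gap is in ingredient~(b). Your justification --- ``$T^p_n$ performs $p$ rounds of the Markov branching decomposition'' --- is not correct: the decomposition of Proposition~\ref{propMB} is at the \emph{first branch point above the root}, and iterating it $p$ times yields a subtree spanned by a quite different collection of branch points than $T^p_n$, which is spanned by the first $(k-1)p+1$ leaves \emph{in order of appearance in the algorithm}. The two subtrees do not coincide, so iterating Proposition~\ref{prop:cvmes} does not describe the sizes $m_i$ of the pieces hanging off $T^p_n$. The paper handles this by a different mechanism: grouping the hanging subtrees by their attachment node $v_j$ on $T^p_n$ and letting $Y_j$ be the total number of internal nodes attached at $v_j$, Lemma~\ref{lem:pseudoMB} shows that as $n\ge p$ varies the table sizes $(Y_j)_j$ evolve as a $(1/k,\,p+1/k)$--Chinese restaurant process. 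Hence $Y_j/n\to V_j$ a.s.\ with $(V_j)$ Poisson--Dirichlet$(1/k,\,p+1/k)$, and bounding $\sum_j(Y_j/n)^{q/k}\le (Y_1/n)^{q/k-1}$ gives
\[
\limsup_{n\to\infty}\;\mathbb{P}\big(d_{\mathrm{GH}}(T^p_n,T_n)>\eta n^{1/k}\big)\;\le\; \frac{C_q}{\eta^{q}}\,\mathbb{E}\big[V_1^{q/k-1}\big].
\]
An explicit density estimate for $V_1$ (from \cite{PitmanYor}) then yields $\mathbb{E}[V_1^{q/k-1}]\asymp p^{\,2-q/k}\to 0$ for $q>2k$. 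This is what replaces your~(b), and it also removes the need to split on the event $\{\max_i m_i\le\delta n\}$.
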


Before proving this proposition, we need an intermediate result. Fix $p\in\Z_+$. All the variables in the following lemma depend on a variable $n\geq p$, however we omit mentioning $n$ for the sake of readability.
\begin{lemma}
\label{lem:pseudoMB}
Let $v_1,\ldots,v_N$ be the internal nodes of $T_n$ which are part of $T^p_n$ but are not branch points of $T^p_n$, listed in order of apparition. At each of these vertices are rooted $k-1$ subtrees of $T_n$ which we call $(S^i_j\, ; \, {j\leq N,i\leq k-1})$, $S^i_j$ being the tree rooted at $v_j$ with a unique edge adjacent to $v_j$, this edge having label $i+1$. Letting, for $j\leq N$ and $i\leq k-1$, $Y^i_j$ be the number of internal nodes of $S^i_j$ then, conditionally on $(Y^l_q; q\leq N, l\leq k-1)$, the tree $S^i_j$ has the same distribution as $T_{Y^i_j}$.

Furthermore, these subtrees allow us to define some restaurant processes by letting $n$ vary: for $j\leq N$, let $S_j=\cup_{i=1}^{k-1}S^i_j$, and $Y_j$ be the number of vertices of $S_j$, including $v_j$ but excluding any leaves. Considering $S_j$ as a table with $Y_j$ clients for all $j$, we have defined a restaurant process whose initial configuration is zero tables at time $n=p$ and has parameters $(1/k,p+1/k)$.
\end{lemma}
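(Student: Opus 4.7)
The plan is to prove the two assertions separately. The first, concerning the conditional distribution of the subtrees $S^i_j$, will be proved by induction on $n\geq p$ in the same spirit as Proposition \ref{propMB}. The second, the identification of the CRP parameters, follows from a direct edge count in $T_n$.

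For the first part, at $n=p$ we have $N=0$ and the statement is vacuous. For the inductive step, observe that the $kn+1$ edges of $T_n$ are partitioned between $T_n^p$ and the various subtrees $S^i_j$, so a uniformly chosen edge of $T_n$ falls in exactly one of these regions, and conditionally on which region, is uniformly distributed among its edges. If the edge falls in a pre-existing $S^{i_0}_{j_0}$, then the update of $S^{i_0}_{j_0}$ is exactly one step of the basic growing construction started from its current state, so the inductive hypothesis $S^{i_0}_{j_0}\sim T_{Y^{i_0}_{j_0}}$ upgrades to $S^{i_0}_{j_0}\sim T_{Y^{i_0}_{j_0}+1}$ without disturbing the other regions or the independence structure. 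If the edge falls in $T_n^p$, a new vertex $v_{N+1}$ is inserted and $k-1$ fresh edges are attached, each terminating at a new leaf, so the newly created subtrees $S^i_{N+1}$ are each a copy of $T_0=T_{Y^i_{N+1}}$ (since $Y^i_{N+1}=0$), again preserving independence. The induction gives conditional independence of the $S^i_j$ with the claimed laws given the full history of region choices; the required statement given only the final $(Y^l_q)$ then follows because the conditional law of each $S^i_j$ depends on that history only through $Y^i_j$.

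For the second assertion, I count edges region by region. A straightforward induction on $n$ shows that $T_n^p$ contains $kp+1+N$ edges (starting from $kp+1$ at $n=p$ and gaining exactly one each time a new $v_j$ is created) and each $S_j$ contains $kY_j-1$ edges (its $k-1$ initial edges plus $k$ additional ones per subsequent internal node). The identity $(kp+1+N)+\sum_{j=1}^N (kY_j-1)=kn+1$ reconciles the total count and yields $\sum_j Y_j=n-p$, the total number of clients at time $n$. The conditional probability at step $n\to n+1$ that the new client joins table $j$ is therefore
\[
\frac{kY_j-1}{kn+1}=\frac{Y_j-1/k}{(n-p)+(p+1/k)},
\]
and the probability of opening a new table is
\[
\frac{kp+1+N}{kn+1}=\frac{(p+1/k)+N\cdot(1/k)}{(n-p)+(p+1/k)}.
\]
Comparing with the standard form of Pitman's two-parameter CRP in \cite{PitmanStFl}, this identifies the process as the $(\alpha,\theta)=(1/k,\,p+1/k)$ restaurant process, with $N=0$ tables and $n-p=0$ clients at the starting time $n=p$.

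The main obstacle is bookkeeping in the first part: the natural induction produces conditional independence given the entire trajectory of region choices, and one has to argue cleanly that this reduces to independence conditional on only the final values $(Y^l_q)$. This reduction is harmless, because the conditional law of each $S^i_j$ is a function of the history only through $Y^i_j$, but the $\sigma$-algebra manipulations should be written out carefully.
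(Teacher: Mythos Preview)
Your proposal is correct and follows essentially the same approach as the paper: an induction on $n$ for the conditional law of the $S^i_j$ (the paper simply refers back to the Markov branching argument of Proposition~\ref{propMB}), and an edge count giving the transition probabilities $(kY_j-1)/(kn+1)$ and $(kp+N+1)/(kn+1)$ for the restaurant process. You have in fact supplied more detail than the paper does, in particular the explicit rewriting of these probabilities in the standard $(\alpha,\theta)$-CRP form and the consistency check $\sum_j Y_j=n-p$; the paper states the final identification without that intermediate step.
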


The subtrees $(S^i_j)$ are also conditionally independent, however this will not be useful to us.
\begin{figure}[ht]
\centering

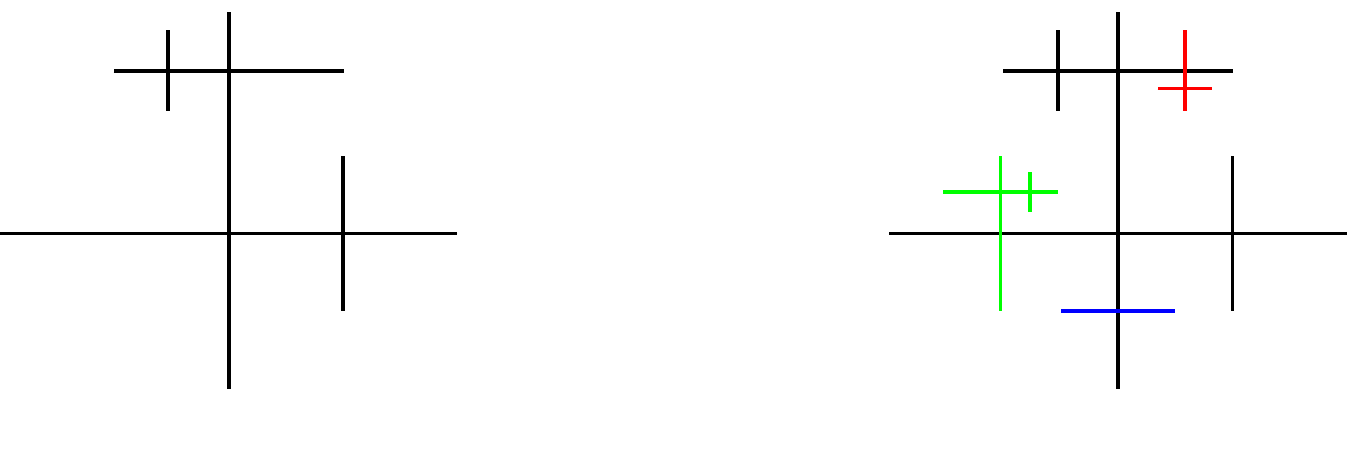
\caption{The tree $T_{10}$ seen as an extension of $T_4$ ($k=3$). The colored sections correspond to the tables of the Chinese restaurant, each table corresponding to two subtrees.}
\end{figure}

\begin{proof} The proof that $S^i_j$ is, conditionally on  $(Y^l_q; q\leq N, l\leq k-1)$, distributed as $T_{Y^i_j}$ is a straightforward induction on $n$. We will not give details since this is very similar to the Markov branching property (Proposition \ref{propMB}) but the main point is that, conditionally on the event that the selected edge at a step of the algorithm is an edge of $S^i_j$, this edge is then uniform amongst the edges of $S^i_j$.

The restaurant process nature of these subtrees is proved just as in Proposition \ref{prop:cvmarginalesGH}: if table $S_j$ has $Y_j$ clients at time $n \geq p$, then the subtree $S_j$ has $kY_j-1$ edges, and a new client will therefore be added to this table with probability $(kY_j-1)/(kn+1)$, while a new table is formed with probability $(kp+N+1)/(kn+1)$. These are indeed the transition probabilities of a restaurant process with parameters $(1/k,p+1/k)$ taken at time $n-p$.
\end{proof}

\noindent \textbf{Proof of Proposition \ref{prop:tightness}.} We will need Lemma 33 of \cite{HM12}: since the sequence $(T_n)_{n\in\Z_+}$ is Markov branching and we have the convergence of measures of Proposition \ref{prop:cvmes}, we obtain that, for any $q>0$, there exists a finite constant $C_q$ such that, for any $\varepsilon>0$ and $n\in\N$,
	\[\mathbb{P}\Big(ht(T_n)\geq \varepsilon n^{1/k}\Big)\leq \frac{C_q}{\varepsilon^q}.
\]
Choosing $q>k$, applying this to all of the $S_j^i$ conditionally on $Y_j^i$ ($j\leq N,i\leq k-1$), and using the simple fact that 
	\[d_{\mathrm{GH}}(T^p_n,T_n)\leq \underset{i,j}\max\; ht(S^j_i),
\]
we obtain
\begin{align*}
\mathbb{P}\Big(d_{\mathrm{GH}}(T^p_n,T_n)>\eta n^{1/k} \; |\; (Y_j^i)_{i,j}\Big) &\leq \sum_{i,j} \mathbb{P}\Big(ht(S^i_j)>\eta n^{\frac{1}{k}}\;|\; (Y^i_j)_{i,j}\Big) \\
                                                &\leq \sum_{i,j} \mathbb{P} \left(ht(S_j^i)>\eta \Big(\frac{n}{Y^i_j}\Big)^{\frac{1}{k}}(Y^i_j)^{\frac{1}{k}}\;|\; (Y^i_j)_{i,j}\right) \\
                                                &\leq \frac{C_q}{\eta^q} \sum_{i,j} \left(\frac{Y^i_j}{n}\right)^{\frac{q}{k}} \\
                                                &\leq \frac{C_q}{\eta^q} \sum_j \left(\frac{Y_j}{n}\right)^{\frac{q}{k}}.
\end{align*}
Let us now reorder the $(Y_j)$ in decreasing order. Theorem 3.2 of \cite{PitmanStFl} states the following convergence for all $j$ as $n$ goes to infinity:
	\[\frac{Y_j}{n} \overset{\mathrm{a.s.}}\longrightarrow V_j
\]
where $(V_j)_{j\in\N}$ is a Poisson-Dirichlet random variable with parameters $(1/k,p+1/k)$. By writing out, for each $j$, $(Y_j)^{\frac{q}{k}}\leq (Y_1)^{\frac{q}{k}-1}Y_j$, we then get
	\[\underset{n\to\infty}\limsup \; \mathbb{P}\Big(d_{\mathrm{GH}}(T^p_n,T_n)>\eta n^{1/k}\Big) \leq \frac{C_q}{\eta^q} \mathbb{E}[(V_1)^{\frac{q}{k}-1}].
\]
We then use an estimation of the density of $V_1$ found in Proposition 19 of \cite{PitmanYor} to obtain
\begin{align*}
\mathbb{E}[(V_1)^{\frac{q}{k}-1}] &\leq \frac{\Gamma (p+1+\frac{1}{k})}{\Gamma(p+\frac{2}{k})\Gamma(1-\frac{1}{k})} \int_0^1 u^{\frac{q-1}{k}-2}(1-u)^{\frac{2}{k}+p-1} du \\
                            &\leq\frac{\Gamma (p+1+\frac{1}{k})\Gamma(\frac{q-1}{k}-1)\Gamma(p+\frac{2}{k})}{\Gamma(p+\frac{2}{k})\Gamma(1-\frac{1}{k})\Gamma(p-1+\frac{q+1}{k})}.
\end{align*}
As $p$ goes to infinity, this is, up to a constant, equivalent to $p^{2-\frac{q}{k}}$, which tends to $0$ if we take $q>2k$, thus ending the proof.
\qed

%%%%%%%%%%%%%%%%%%%%%%%%%%%%%%%%%%
\subsection{Gromov-Hausdorff convergence}
\label{sec:GH}
%%%%%%%%%%%%%%%%%%%%%%%%%%%%%%%%%%
In this section and the next two sections we work with the versions of the trees $\mathcal T^p$ simulnaeously embedded in $\ell^1$ (see Proposition \ref{prop:cvmarginalesGH}) and we recall that $\T=\overline{\cup_{p=0}^{\infty}\T^p}$.

\begin{lemma}\label{lem:cvdesarbreslimites} As $p$ tends to infinity, we have the following convergence, in the sense of Hausdorff convergence for compact subsets of $\ell^1$:
	\[\T^p \overset{a.s.}\longrightarrow \T.\]
In particular, the tree $\T$ is in fact compact and $\mathcal T^p$ converges a.s. to $\mathcal T$ in the Gromov-Hausdorff sense.
\end{lemma}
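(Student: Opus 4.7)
The plan is to show that $(\mathcal{T}^p)_{p\geq 0}$ is almost surely Cauchy for the Hausdorff distance in $\ell^1$, and then deduce compactness of $\mathcal{T}$ and the Gromov-Hausdorff convergence as consequences.

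First I would exploit the stick-breaking construction: since $\mathcal{T}^{p+1}$ is built from $\mathcal{T}^p$ by attaching new sticks, the sequence $(\mathcal{T}^p)$ is a nondecreasing sequence of compact subsets of $\ell^1$. Hence, for $p<p'$,
\[d_H(\mathcal{T}^p,\mathcal{T}^{p'})=\sup_{x\in\mathcal{T}^{p'}}d_{\ell^1}(x,\mathcal{T}^p),\]
and $p\mapsto d_H(\mathcal{T}^p,\mathcal{T})$ is nonincreasing (enlarging $\mathcal{T}^p$ can only shrink the supremum). Moreover $d_H(\mathcal{T}^p,\mathcal{T})=\lim_{p'\to\infty}d_H(\mathcal{T}^p,\mathcal{T}^{p'})$ by monotone convergence. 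It is therefore enough to prove that $d_H(\mathcal{T}^p,\mathcal{T}^{p'})\to 0$ in probability as $p\to\infty$, uniformly in $p'>p$: combined with the monotonicity of $d_H(\mathcal{T}^p,\mathcal{T})$ in $p$, this automatically upgrades to almost sure convergence to $0$.

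To obtain this uniform control, I would revisit the proof of Proposition \ref{prop:tightness}. That proof actually bounds $\max_{i,j}\,ht(S^i_j)$, which is precisely the Hausdorff distance between $T^p_n$ and $T_n$ in the intrinsic metric of $T_n$; hence the same argument yields: for all $\varepsilon,\eta>0$, there exists $p_0$ such that for $p\geq p_0$ and $n$ large,
\[\mathbb{P}\big(d_H(T^p_n,T_n)>\eta\, n^{1/k}\big)<\varepsilon.\]
Since $T^{p'}_n\subset T_n$, the same bound holds with $T^{p'}_n$ in place of $T_n$. I would then upgrade Proposition \ref{prop:cvmarginalesGH} to the joint almost sure convergence of $n^{-1/k}(T^p_n,T^{p'}_n)$, as nested subsets of $\ell^1$ via stick-breaking, towards $(\mathcal{T}^p,\mathcal{T}^{p'})$. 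This requires no new idea: the Chinese restaurant argument of Proposition \ref{prop:cvmarginalesGH} simultaneously gives the convergence of every pairwise distance between the first $(k-1)p'+1$ leaves, and the nested stick-breaking embedding is consistent with this joint convergence. Continuity of the Hausdorff distance under joint Hausdorff convergence in $\ell^1$ then gives $n^{-1/k}d_H(T^p_n,T^{p'}_n)\to d_H(\mathcal{T}^p,\mathcal{T}^{p'})$ almost surely, and passing the bound above to the limit yields $\mathbb{P}(d_H(\mathcal{T}^p,\mathcal{T}^{p'})>\eta)\leq\varepsilon$ for all $p'>p\geq p_0$, as desired.

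Once $d_H(\mathcal{T}^p,\mathcal{T})\to 0$ a.s. is established, compactness of $\mathcal{T}$ follows from the completeness of the space of compact subsets of $\ell^1$ under Hausdorff distance: $\mathcal{T}$ is the Hausdorff limit of the compact sets $\mathcal{T}^p$ and must therefore be compact. The Gromov-Hausdorff convergence is then immediate from the general inequality $d_{\mathrm{GH}}\leq d_H$. The main obstacle is the joint-embedding step: one must check that the natural stick-breaking embeddings of $\mathcal{T}^p$ and $\mathcal{T}^{p'}$ in $\ell^1$ are compatible with the nesting $T^p_n\subset T^{p'}_n$ inside $T_n$, so that the discrete Hausdorff distance passes to the limit as the Hausdorff distance in $\ell^1$. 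This is essentially bookkeeping but is where care is needed.
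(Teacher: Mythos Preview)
Your proposal is correct and follows essentially the same route as the paper: use Proposition~\ref{prop:tightness} together with the almost sure convergence of the marginals (Proposition~\ref{prop:cvmarginalesGH}) to show that $(\mathcal{T}^p)_p$ is Cauchy in probability for the Hausdorff distance, then exploit the monotonicity $\mathcal{T}^p\subset\mathcal{T}^{p+1}$ to upgrade to almost sure convergence, and finally invoke completeness of the Hausdorff metric to get compactness of the limit. The only cosmetic difference is that the paper uses the triangle inequality $d_H(T^p_n,T^q_n)\leq d_H(T^p_n,T_n)+d_H(T^q_n,T_n)$ where you use the cleaner inclusion bound $d_H(T^p_n,T^{p'}_n)\leq d_H(T^p_n,T_n)$; and your explicit mention of the joint stick-breaking embedding is exactly the point the paper passes over when it writes $n^{-1/k}d_{\ell^1,\mathrm{H}}(T^p_n,T^q_n)\to d_{\ell^1,\mathrm{H}}(\mathcal{T}^p,\mathcal{T}^q)$.
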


\begin{proof} Let us first prove that the sequence $(\T^p)_{p\in\N}$ is Cauchy in probability for the Hausdorff distance in $\ell^1$, in the sense of \cite{Kallenberg}, Chapter 3: we want to show that, for any $\varepsilon>0$ and $\eta>0$, if $p$ and $q$ are large enough, $\mathbb{P}\big(d_{\ell^1,\mathrm{H}}(\T^p,\T^q)>\eta\big)<\varepsilon.$ Let therefore $\eta>0$ and $\varepsilon>0$. We have, for integers $p$ and $q$,
\begin{align*}
\mathbb{P}\left(d_{\ell^1,\mathrm{H}}(\T^p,\T^q)>\eta\right)&= \mathbb{P}\left(\underset{n\to\infty}\lim n^{-1/k}d_{\ell^1,\mathrm{H}}(T^p_n,T^q_n)>\eta  \right) \\
             &\leq \underset{n\to\infty}\liminf \, \mathbb{P}\left(n^{-1/k}d_{\ell^1,\mathrm{H}}(T^p_n,T^q_n)>\eta \right) \\
             &\leq \underset{n\to\infty}\liminf \, \mathbb{P}\left(d_{\ell^1,\mathrm{H}}(T^p_n,T_n)+d_{\ell^1,\mathrm{H}}(T^q_n,T_n) >n^{1/k}\eta\right) \\
             &\leq  \underset{n\to\infty}\limsup \, \mathbb{P}\left(d_{\ell^1,\mathrm{H}}(T^p_n,T_n)>n^{1/k}\frac{\eta}{2}\right) + \underset{n\to\infty}\limsup \, \mathbb{P}\left(d_{\ell^1,\mathrm{H}}(T^q_n,T_n)>n^{1/k}\frac{\eta}{2}\right).
\end{align*}
Thus, by Proposition \ref{prop:tightness}, choosing $p$ and $q$ large enough yields
	\[\mathbb{P}\Big(d_{\ell^1,\mathrm{H}}(\T^p,\T^q)>\eta\Big) \leq \varepsilon.
\]
Since the Hausdorff metric on the set of nonempty compact subsets of $\ell^1$ is complete, the sequence $(\T^p)_{p\in\N}$ does converge in probability, and thus has an a.s. converging subsequence. Since it is also monotonous (in the sense of inclusion of subsets), it in fact does converge to a limit we call $\mathcal{L}$, and we only need to show that $\mathcal{L}=\T$. Since $\mathcal{L}$ is a compact subset of $\ell^1$ and contains $\T^p$ for all $p$, we have $\T\subset\mathcal{L}$. On the other hand, assuming the existence of a point $x\in\mathcal{L}\setminus\T$ would yield $\varepsilon>0$ such that $d_{\ell^1}(x,\T)\geq\varepsilon$ and also $d_{\ell^1}(x,\T^p)\geq\varepsilon$ for all $p$, negating the Hausdorff convergence of $\T^p$ to $\mathcal{L}$.
\end{proof}

\begin{prop}
\label{prop:cvprobaGH}
We have
	\[\frac{T_n}{n^{1/k}} \overset{\mathbb{P}}\longrightarrow \T
\]
as $n$ goes to infinity, in the Gromov-Hausdorff sense.
\end{prop}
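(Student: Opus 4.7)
The plan is to combine three ingredients already assembled in this section: the almost sure Gromov-Hausdorff convergence of the finite-dimensional marginals $T_n^p/n^{1/k}\to\mathcal{T}^p$ (Proposition \ref{prop:cvmarginalesGH}), the almost sure convergence $\mathcal{T}^p\to\mathcal{T}$ as $p\to\infty$ (Lemma \ref{lem:cvdesarbreslimites}), and the tightness estimate of Proposition \ref{prop:tightness} that controls the discrepancy between $T_n^p$ and $T_n$ uniformly in large $n$ once $p$ is chosen large enough. The main tool is the triangle inequality for the Gromov-Hausdorff metric:
$$d_{\mathrm{GH}}\!\left(\tfrac{T_n}{n^{1/k}},\mathcal{T}\right)\leq d_{\mathrm{GH}}\!\left(\tfrac{T_n}{n^{1/k}},\tfrac{T_n^p}{n^{1/k}}\right)+d_{\mathrm{GH}}\!\left(\tfrac{T_n^p}{n^{1/k}},\mathcal{T}^p\right)+d_{\mathrm{GH}}(\mathcal{T}^p,\mathcal{T}).$$

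I would fix $\eta,\varepsilon>0$ and proceed in the following order. First, using Lemma \ref{lem:cvdesarbreslimites}, I pick an integer $p_0$ large enough that $\mathbb{P}(d_{\mathrm{GH}}(\mathcal{T}^p,\mathcal{T})>\eta/3)<\varepsilon/3$ for every $p\geq p_0$. Next, applying Proposition \ref{prop:tightness} with threshold $\eta/3$, I choose $p\geq p_0$ such that, for all $n$ large enough, $\mathbb{P}(d_{\mathrm{GH}}(T_n^p,T_n)>n^{1/k}\eta/3)<\varepsilon/3$; the last sentence of that proposition, which ensures the estimate is preserved when $p$ is enlarged, is what makes the two choices compatible. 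With $p$ now fixed, Proposition \ref{prop:cvmarginalesGH} provides $d_{\mathrm{GH}}(T_n^p/n^{1/k},\mathcal{T}^p)\to 0$ almost surely, hence $\mathbb{P}(d_{\mathrm{GH}}(T_n^p/n^{1/k},\mathcal{T}^p)>\eta/3)<\varepsilon/3$ for all sufficiently large $n$. Feeding these three bounds into the triangle inequality and taking a union bound yields $\mathbb{P}(d_{\mathrm{GH}}(T_n/n^{1/k},\mathcal{T})>\eta)<\varepsilon$ for $n$ large enough, which is exactly the desired convergence in probability.

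The argument is formal once the three pieces are in place, and the only delicate point is the order of quantifiers. The $p$ must be selected first, and only after fixing it can one invoke the almost sure convergence of $T_n^p/n^{1/k}$ to $\mathcal{T}^p$ to let $n\to\infty$; any attempt to take $p$ depending on $n$ would break the use of Proposition \ref{prop:cvmarginalesGH}. Finally, this strategy genuinely yields only convergence in probability and not almost sure convergence, because Proposition \ref{prop:tightness} is a probabilistic, not pathwise, control of $d_{\mathrm{GH}}(T_n,T_n^p)/n^{1/k}$, with no uniform rate in $p$ available from the Chinese restaurant arguments used to prove it.
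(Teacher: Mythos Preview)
Your proof is correct and follows essentially the same approach as the paper: both use the same three-term triangle inequality and the same three ingredients (Proposition \ref{prop:cvmarginalesGH}, Proposition \ref{prop:tightness}, Lemma \ref{lem:cvdesarbreslimites}) in the same logical order, first fixing $p$ via tightness and the convergence $\mathcal{T}^p\to\mathcal{T}$, then letting $n\to\infty$ for that fixed $p$. Your write-up is slightly more explicit about the quantifier order and splits $\varepsilon$ into thirds rather than bounding each term by $\varepsilon$, but these are cosmetic differences.
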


\begin{proof}
All the work has already been done, we only need to stick the pieces together. Let $n\in\N$ and $p\leq n$, we use the triangle inequality:
	\[d_{\mathrm{GH}}\bigg(\frac{T_n}{n^{1/k}},\T\bigg)\leq d_{\mathrm{GH}}\bigg(\frac{T_n}{n^{1/k}},\frac{T_n^p}{n^{1/k}}\bigg) + d_{\mathrm{GH}}\bigg(\frac{T_n^p}{n^{1/k}},\mathcal T^p\bigg) + d_{\mathrm{GH}}(\T^p,\T).
\]
For $\eta>0$, we then have
\begin{eqnarray*}
&& \mathbb{P}\Bigg(d_{\mathrm{GH}} \bigg(\frac{T_n}{n^{1/k}},\T\bigg)>\eta\Bigg) \\
&\leq& 
	      \mathbb{P}\Bigg(d_{\mathrm{GH}}\bigg(\frac{T_n}{n^{1/k}},\frac{T_n^p}{n^{1/k}}\bigg)>\frac{\eta}{3}\Bigg) +
	      \mathbb{P}\Bigg(d_{\mathrm{GH}}\bigg(\frac{T_n^p}{n^{1/k}},\mathcal T^p\bigg)>\frac{\eta}{3}\Bigg) +
	      \mathbb{P}\Bigg(d_{\mathrm{GH}}(\T^p,\T)>\frac{\eta}{3}\Bigg).	      
\end{eqnarray*}
Let $\varepsilon>0$. By Lemma \ref{lem:cvdesarbreslimites} and Proposition \ref{prop:tightness}, there exists $p$ such that the third term of the sum is smaller than $\varepsilon$, and the first term also is for all $n$ large enough. Apply then Proposition \ref{prop:cvmarginalesGH} with this fixed $p$, to make the second term smaller than $\varepsilon$ for large $n$, and the proof is over.
\end{proof}

%%%%%%%%%%%%%%%%%%%%%%%%%%%%%%%%%%
\subsection{Adding in the measures}
\label{sec:GHP}
%%%%%%%%%%%%%%%%%%%%%%%%%%%%%%%%%%
We now know that $\T$ is compact. This compactness will enable us to properly obtain a measure on $\T$ and the desired GHP convergence. For all $n$ and $p\leq n$, let $\mu_n^p$ be the image of $\mu_n$ by the projection from $T_n$ to $T_n^p$ (see Appendix \ref{sec:A2} for a precise definition of projection).  Let also, for all $p$, $\pi^p$ be the projection from $\T$ to $\T^p$.
We start by proving an extension of Proposition \ref{prop:cvmarginalesGH} to the measured case.

\begin{prop}
\label{prop:cvmarginalesGHP}
There exists a probability measure $\mu^p$ on $\T^p$ such that, in the GHP sense,
	\[\Big(\frac{T^p_n}{n^{1/k}},\mu^p_n\Big) \overset{a.s.}\to (\T^p,\mu^p).
\]
What's more, we have, for $p'\geq p$, $\mu^p=(\pi^p)_{*}\mu^{p'}$.
\end{prop}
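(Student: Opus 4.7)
The plan is to extend the almost-sure Gromov--Hausdorff convergence of Proposition \ref{prop:cvmarginalesGH} by identifying $\mu_n^p$ as an explicit atomic measure and passing to the limit atom-by-atom. First, I would describe $\mu_n^p$ precisely. Every leaf of $T_n$ projects to the unique closest point of the closed subtree $T_n^p$. The leaves $L_n^1,\ldots,L_n^{(k-1)p+1}$ are already in $T_n^p$ and project to themselves, whereas every other leaf of $T_n$ sits inside one of the subtrees $S_j^i$ introduced in Lemma \ref{lem:pseudoMB} and therefore projects to the non-branching internal node $v_j$. A key observation is that branching points of $T_n^p$ receive no mass at all, because each of their $k$ children-subtrees in $T_n$ contains at least one of the first $(k-1)p+1$ leaves and therefore has a non-empty intersection with $T_n^p$. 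Thus, writing $N_n$ for the number of non-branching internal nodes of $T_n^p$ and recalling that $S_j$ has $(k-1)Y_j^n$ leaves,
\[
\mu_n^p \;=\; \sum_{i=1}^{(k-1)p+1}\frac{1}{(k-1)n+1}\,\delta_{L_n^i} \;+\; \sum_{j=1}^{N_n} \frac{(k-1)Y_j^n}{(k-1)n+1}\,\delta_{v_j^n}.
\]

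By Lemma \ref{lem:pseudoMB} combined with Theorem 3.2 of \cite{PitmanStFl}, for each fixed $j$ the mass $(k-1)Y_j^n/((k-1)n+1)$ converges almost surely to a random weight $V_j \geq 0$, with the $V_j$'s satisfying $\sum_{j \geq 1} V_j = 1$ a.s. To locate each $v_j^n$ asymptotically, note that once $v_j$ is born it lies on a well-defined edge $e_j$ of the graph $T_p$; the sequence of subdivisions of $e_j$ by later internal nodes is itself a Chinese restaurant process, so the argument used in Proposition \ref{prop:cvmarginalesGH} for $d(\rho,L_n^1)$, combined with the already established edge-length convergence, shows that the rescaled position of $v_j^n$ along $e_j$ converges a.s.\ to some point $x_j \in \mathcal T^p$. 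I would then define
\[
\mu^p \;:=\; \sum_{j \geq 1} V_j\,\delta_{x_j},
\]
which is a probability measure on $\mathcal T^p$; the leaf contributions to $\mu_n^p$ vanish in the limit since their total mass is $O(1/n)$.

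Embedding all of the rescaled trees $T_n^p/n^{1/k}$ jointly with $\mathcal T^p$ inside $\ell^1$ via the stick-breaking embedding of Proposition \ref{prop:cvmarginalesGH} provides Hausdorff convergence of the supports. For any $\varepsilon>0$ one picks $J$ so that $\sum_{j>J}V_j<\varepsilon$; the first $J$ atoms of $\mu_n^p$ converge a.s.\ in both position and mass to those of $\mu^p$, while the tail mass $\sum_{j>J}(k-1)Y_j^n/((k-1)n+1)$ converges a.s.\ to $1-\sum_{j\leq J}V_j<\varepsilon$ via the identity $\sum_{j\geq 1}(k-1)Y_j^n/((k-1)n+1)=(k-1)(n-p)/((k-1)n+1)\to 1$. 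This is enough to yield a.s.\ Prokhorov convergence of $\mu_n^p$ to $\mu^p$. For the consistency relation, one has $\mu_n^p=(\pi_n^{p,p'})_\ast \mu_n^{p'}$ for every $n$, where $\pi_n^{p,p'}:T_n^{p'}\to T_n^p$ denotes the natural projection; passing to the a.s.\ GHP limit in the common $\ell^1$ embedding yields $\mu^p=(\pi^p)_\ast \mu^{p'}$. The main obstacle is controlling the tail uniformly in $n$, i.e.\ verifying that the low-mass atoms of $\mu_n^p$ can be neglected, which is achieved by combining mass conservation with the a.s.\ convergence of the $J$ largest atoms for each fixed $J$.
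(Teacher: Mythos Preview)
Your approach is correct and takes a genuinely different route from the paper. The paper does not decompose $\mu_n^p$ into atoms at all; instead it invokes the general Lemma~\ref{lem:cvGHParbresdiscrets}, which reduces GHP convergence of trees with a fixed combinatorial shape to checking that the subtree masses $\mu_n^p\big((T_n^p)_{x_n}\big)$ converge for a dense set of points $x$ in the limit tree. The paper takes as its dense set the branch points of $\mathcal T_k$ lying on $\mathcal T^p$ (dense because the dislocation measure is infinite, so $\mathcal T_k$ is leaf-dense), and proves convergence of those subtree masses by observing that $Z_n/(kn+1)$ is a nonnegative martingale, where $Z_n$ counts internal nodes above $x_n$. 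Consistency $\mu^p=(\pi^p)_*\mu^{p'}$ is then read off from the equality of these subtree masses for $p$ and $p+1$, via Lemma~\ref{lem:projmesure}.

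Your route is more explicit: it identifies $\mu^p$ as purely atomic, matches each atom of $\mu_n^p$ with a limiting atom, and controls the tail by mass conservation plus the GEM convergence from the $(1/k,p+1/k)$ restaurant. This gains a concrete description of $\mu^p$ and makes the projection compatibility visible atom-by-atom. The paper's route is more abstract but reusable (the same lemma handles $T_n(k,k')$ later), and it sidesteps any discussion of atom positions. One minor imprecision: the convergence of the rescaled position of $v_j^n$ along $e_j$ is not really a Chinese-restaurant statement; the clean argument is that once $v_j$ is born, $d_{T_n}(a,v_j)\big/\prod_m(1+1/(km+1))$ is a nonnegative martingale, so $n^{-1/k}d_{T_n}(a,v_j)$ converges a.s., which together with the edge-length convergence of Proposition~\ref{prop:cvmarginalesGH} locates $x_j$ in the $\ell^1$ embedding.
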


\begin{proof} We aim to apply Lemma \ref{lem:cvGHParbresdiscrets}. For this we first embed the trees $T_n^p$ and $\mathcal T^p$ in $\ell^1$ with the stick-breaking method, by sequentially adding the leaves according to their indices, as recalled at the end of the proof of Proposition \ref{prop:cvmarginalesGH}. 

The first step to apply Lemma \ref{lem:cvGHParbresdiscrets} is then to find an appropriate dense subset of $\mathcal T^p$.
Since we know from Section \ref{sec:cvd} that the distribution of the metric space $\T$ is that of a fragmentation tree and that the dislocation measure $\nu_k$ has infinite total mass, Theorem 1 from \cite{HM04} tells us that it is leaf-dense. As a consequence, its branch points are also dense. Let $S_p$ be the set of points of $\T^p$ which are also branch points of $\T$, we then know that $S_p$ is a dense subset of $\T^p$. In fact $S_p$ can be simply explicited:
	\[S_p = \{L^i\wedge L^j;\: i\leq (k-1)p+1 \text{ or }  j\leq (k-1)p+1\}
\]
(recall that  $\{L^i, i \geq 1\}$ is the set of leaves of $\mathcal T$ that belong to $\cup_{p=0}^{\infty} \mathcal T^p$).
Let $i$ and $j$ be integers such that either $i$ or $j$ is smaller than or equal to $(k-1)p+1$, and let $x=L^i\wedge L^j$. For $n$ such that $i\leq (k-1)n+1$ and $j\leq (k-1)n+1$, define $x_n$ as the branch point in $T_n$ of $L^i_n$ and $L^j_n$. It is immediate that $x_n$ converges to $x$, and moreover, calling $(T^p_n)_{x_n}$ the subtree of descendants of $x_n$ in $(T^p_n)$, that $(T^p_n)_{x_n}$ converges to $\T^p_x$ (the subtree of descendants of $x$ in $\T^p$) in the Hausdorff sense in $\ell^1$. What is left for us to do is to prove that $\mu^p_n\big(\big(T^p_n\big)_{x_n}\big)=\mu_n\big((T_n)_{x_n}\big)$ converges a.s. as $n$ goes to infinity. To this effect, we let $Z_n$ be the number of internal nodes of $(T_n)_{x_n}$, including $x_n$ itself. Since we have
	\[\mu_n\big((T_n)_{x_n}\big)= \frac{(k-1)Z_n +1}{(k-1)n+1},
\]
convergence of $\mu_n\big((T_n)_{x_n}\big)$ as $n$ goes to infinity is equivalent to convergence of $n^{-1}Z_n$. However the distribution of $Z_n$ is governed by a simple recursion: for all $n$, given $Z_n$, $Z_{n+1}=Z_n+1$ with probability $(kZ_n)/(1+kn)$, while $Z_{n+1}=Z_n$ with the complementary probability. It is then easy to check that the rescaled process $(Z_n/(kn+1))_{n \in \mathbb N}$ is a non-negative martingale, hence converges a.s.. Then, so does $\mu_n\big((T_n)_{x_n}\big)=n^{-1}Z_n$. Hence we can apply Lemma \ref{lem:cvGHParbresdiscrets} to conclude.

The fact that $\mu^p=(\pi^p)_{*}\mu^{p+1}$ is then a direct consequence of the fact that $\mu^p_n=(\pi^p_n)_{*}\mu^{p+1}_n$ for all $n$: for any $x$ in $S_p$, we have $\mu^p_n\big(\big(T^p_n\big)_{x_n}\big)=\mu^{p+1}_n\big(\big(T^{p+1}_n\big)_{x_n}\big)$ and, letting $n$ tend to infinity (and taking left-continuous versions in $x$ as stated in Lemma \ref{lem:cvGHParbresdiscrets}), we obtain $\mu^p\big((\T^p)_x\big)=\mu^{p+1}\big((\T^{p+1})_x\big)$, and Lemma \ref{lem:projmesure} ends the proof.
\end{proof}

\begin{lemma} 
\label{lem:constrmu}
As $p$ tends to infinity, $\mu^p$ converges a.s. to a probability measure $\mu$ on $\T$ which satisfies, for all $p$, $\mu^p=(\pi^p)_*\mu$
\end{lemma}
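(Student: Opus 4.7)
The plan is to show that $(\mu^p)$ is almost surely Cauchy in the Prokhorov metric on the compact tree $\mathcal{T}$ (viewed inside $\ell^1$), to produce $\mu$ by completeness, and then to verify the projective compatibility by passing to the limit in the one-step identity provided by Proposition \ref{prop:cvmarginalesGHP}.

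First, I would work on the almost sure event of Lemma \ref{lem:cvdesarbreslimites} on which $\mathcal{T}^p \to \mathcal{T}$ in Hausdorff distance in $\ell^1$; in particular, $(\mathcal{T}^p)_{p\in \mathbb{N}}$ is then Cauchy for $d_{\ell^1,\mathrm{H}}$. The key geometric input is the standard property of projections in an $\mathbb{R}$-tree onto a closed subtree containing the root: for $q \geq p$ and any $x \in \mathcal{T}^q$, the projection $\pi^p(x)$ is the unique point of $\mathcal{T}^p$ on the path from the root to $x$, so that $d_{\ell^1}(x,\pi^p(x)) = d_{\ell^1}(x,\mathcal{T}^p) \leq d_{\ell^1,\mathrm{H}}(\mathcal{T}^q,\mathcal{T}^p).$ A direct consequence is that the projections are consistent in the sense $\pi^p \circ \pi^q = \pi^p$ on $\mathcal{T}$ whenever $p \leq q$.

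Next, iterating the one-step identity $\mu^p = (\pi^p)_*\mu^{p+1}$ from Proposition \ref{prop:cvmarginalesGHP} and using the above consistency of projections gives $\mu^p = (\pi^p)_*\mu^q$ for all $q\geq p$. I would then couple $\mu^p$ and $\mu^q$ by setting $X \sim \mu^q$ and $Y := \pi^p(X) \sim \mu^p$; the pointwise bound above gives $d_{\ell^1}(X,Y) \leq d_{\ell^1,\mathrm{H}}(\mathcal{T}^q,\mathcal{T}^p)$ almost surely, so that for every Borel set $A \subset \ell^1$,
\[
\mu^p(A) = \mathbb{P}(Y \in A) \leq \mathbb{P}(X \in A^{\varepsilon}) = \mu^q(A^\varepsilon), \qquad \varepsilon := d_{\ell^1,\mathrm{H}}(\mathcal{T}^q,\mathcal{T}^p),
\]
and similarly with the roles reversed. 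This yields $d_{\ell^1,\mathrm{P}}(\mu^p,\mu^q) \leq d_{\ell^1,\mathrm{H}}(\mathcal{T}^q,\mathcal{T}^p)$, so $(\mu^p)$ is a.s. Cauchy for the Prokhorov metric. Since $\mathcal{T}$ is compact, the set of Borel probability measures supported on $\mathcal{T}$ is complete (even compact) for $d_{\ell^1,\mathrm{P}}$, and we obtain an a.s. limit $\mu$, a probability measure supported on $\mathcal{T}$.

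Finally, to verify $\mu^p = (\pi^p)_*\mu$, I would fix $p$ and let $q \to \infty$ in the identity $\mu^p = (\pi^p)_*\mu^q$. The map $\pi^p \colon \mathcal{T} \to \mathcal{T}^p$ is $1$-Lipschitz (again by the $\mathbb{R}$-tree projection property), hence continuous, so the continuous mapping theorem applied to the weak convergence $\mu^q \Rightarrow \mu$ gives $(\pi^p)_*\mu^q \Rightarrow (\pi^p)_*\mu$, whence $\mu^p = (\pi^p)_*\mu$. The only mildly delicate point is the Hausdorff-to-Prokhorov comparison in the second step; once the projection bound is in place it is essentially automatic, and the rest of the argument is a routine projective-limit construction tailored to measures on $\mathbb{R}$-trees.
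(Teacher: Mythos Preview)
Your argument is correct. It differs from the paper's proof mainly in the order of operations and in the tool used to produce $\mu$. The paper first \emph{constructs} $\mu$ directly on $\mathcal T$ via the tree-specific measure-extension result Lemma~\ref{lem:mesuresarbres}, by setting $\mu(\mathcal T_x)=\mu^p(\mathcal T^p_x)$ for $x\in\mathcal T^p$ (well-defined thanks to the compatibility in Proposition~\ref{prop:cvmarginalesGHP}); the identity $\mu^p=(\pi^p)_*\mu$ is then immediate, and the Prokhorov convergence follows from Lemma~\ref{lem:projcontrol}, which gives $d_{\mathcal T,\mathrm P}(\mu,(\pi^p)_*\mu)\le d_{\mathcal T,\mathrm H}(\mathcal T,\mathcal T^p)$. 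You instead first prove that $(\mu^p)$ is Cauchy via the same projection-to-Hausdorff comparison (your coupling bound is exactly Lemma~\ref{lem:projcontrol} applied on $\mathcal T^q$ with subtree $\mathcal T^p$), obtain $\mu$ by compactness of $\mathcal M_1(\mathcal T)$, and recover the projective identity by the continuous mapping theorem. Your route is a bit more elementary and more portable, since it avoids the tree-specific Lemma~\ref{lem:mesuresarbres} and would work verbatim for any increasing sequence of compact subsets with $1$-Lipschitz retractions; the paper's route is more constructive, giving the explicit formula $\mu(\mathcal T_x)=\mu^p(\mathcal T^p_x)$ from the outset.
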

\begin{proof} Since $\T$ is compact, Lemma \ref{lem:mesuresarbres} shows that we can define a unique measure $\mu$ on $\T$ such that, for all $p$ and $x\in\T^p$, $\mu(\T_x)=\mu^p(\T^p_x)$ (Proposition \ref{prop:cvmarginalesGHP} assures us that this is well-defined since it does not depend on the choice of $p$). By definition, we then have $\mu^p=(\pi^p)_*\mu$ for all $p$, and Lemma \ref{lem:projcontrol} ends the proof.
\end{proof}

\noindent \textbf{Proof of Theorem \ref{thmain} (convergence in probability part).} We want to prove that
\begin{equation}
\label{cvproba}
\Big(\frac{T_n}{n^{1/k}},\mu_n\Big) \overset{\mathbb P}\to (\T,\mu).
\end{equation}
Once this will be done, the distribution of $(\T,\mu)$ will be that of the fragmentation tree mentioned in Theorem \ref{thmain}, since we have already proved the convergence in distribution to that measured tree in Section  \ref{sec:cvd}. 
To get (\ref{cvproba}), notice that Lemma \ref{lem:projcontrol} directly improves Proposition \ref{prop:tightness}, since we can replace the GH distance by the GHP distance, adding the measures $\mu_n$ and $\mu_n^p$ respectively to the trees $T_n$ and $T_n^p$. Once we know this, as well as Proposition \ref{prop:cvmarginalesGHP} and Lemma \ref{lem:constrmu}, the same proof as that of Proposition \ref{prop:cvprobaGH} works.
\qed

%%%%%%%%%%%%%%%%%%%%%%%%%%%%%%%%%%
\subsection{Joint convergence}
\label{sec:joint}
%%%%%%%%%%%%%%%%%%%%%%%%%%%%%%%%%%
For the sake of clarity, we return to the notations of the introduction: for $n\in\Z_+$, $T_n(k)$ is the tree at the $n$-th step of the algorithm, its scaling limit is $\T_k$. For $p\leq n$, we let $T^p_n(k)$ and $\T^p_k$ be the respective finite-dimensional marginals we have studied, endowed, respectively, with the probability measures $\mu_n^p(k)$ and $\mu^p_k$. Let $k'$ be an integer with $2\leq k' < k$. Recall now that $T_n(k,k')$ is the subtree of $T_n(k)$ obtained by discarding all edges with labels greater than or equal to $k'+1$, as well as their descendants. The objective of this section is to prove the convergence in probability of $n^{-1/k}T_n(k,k')$ by using what we know of the convergence of $n^{-1/k}T_n(k)$. This method once again fails to give the distribution of the limiting tree, which will be obtained in Section \ref{sec:loinonconservatif}.

For all $n$, the tree $T_n(k,k')$ comes with a measure $\mu_n(k,k')$ which is the image of $\mu_n(k)$ by the projection from $T_n(k)$ onto $T_n(k,k')$. Similarly, for $p\leq n$, define $$T_n^p(k,k')=T_n(k,k')\cap T_n^p(k),$$ and the image measure $\mu_n^p(k,k')$. For fixed $p$, the almost sure convergence of $n^{-1/k}{T_n^p(k)}$ to $\T^p_k$ as $n$ goes to infinity allows us to extend the edge labellings to $\T^p_k$, and thus define $\T^p_{k,k'}$ and $\mu^p_{k,k'}$ in analogous fashion. Note that the sequence $\big(n^{-1/k}T_n^p(k,k'),\mu_n^p(k,k')\big)$ converges almost surely to $(\T^p_{k,k'},\mu^p_{k,k'})$ as $n$ goes to infinity, by using Lemmas \ref{lem:cvGHarbresdiscrets} and \ref{lem:cvGHParbresdiscrets} and imitating the proofs of Propositions \ref{prop:cvmarginalesGH} and \ref{prop:cvmarginalesGHP}. Finally, considering again versions of all these trees embedded in $\ell^1$ via the stick-breaking construction, we let $$\T_{k,k'}=\overline{\cup_{p=0}^{\infty} \T^p_{k,k'}}.$$ Clearly,  $\T_{k,k'} \subset \T_k$ and we let $\mu_{k,k'}$ be the image of $\mu_{k}$ under the projection from $\T_k$ onto $\T_{k,k'}$.

\medskip

\noindent \textbf{Proof of Theorem \ref{thjoint} (convergence in probability part).} What we want to show is that the sequence of measured trees $\big(n^{-1/k}T_n(k,k'),\mu_n(k,k')\big)$ converges in probability to $(\T_{k,k'},\mu_{k,k'})$ as $n$ goes to infinity, and it is in fact a simple consequence of Lemma \ref{lem:projlip}. Indeed, this lemma directly gives us the fact that, for $p\leq n$,
	\[d_{\mathrm{GHP}}\left(\left(\frac{T_n^p(k,k')}{n^{1/k}},\mu_n^p(k,k')\right),\left(\frac{T_n(k,k')}{n^{1/k}},\mu_n(k,k')\right)\right)\leq d_{\mathrm{GHP}}\left(\left(\frac{T_n^p}{n^{1/k}},\mu_n^p\right),\left(\frac{T_n}{n^{1/k}},\mu_n\right)\right),
\]
as well as, for any $p$,
	\[d_{\mathrm{GHP}}\left(\left(\T^p_{k,k'},\mu^p_{k,k'}\right),(\T_{k,k'},\mu_{k,k'})\right)\leq d_{\mathrm{GHP}}\left((\T^p_k,\mu^p_k),(\T_k,\mu_k)\right).
\]
Since we know that $(\T^p_k,\mu^p_k)\rightarrow(\T_k,\mu_k)$ a.s. as $p \rightarrow \infty$, that $\big(n^{-1/k}T_n^p(k,k'),\mu_n^p(k,k')\big)\rightarrow (\T^p_{k,k'},\mu^p_{k,k'})$ a.s. for all $p$ as $n \rightarrow \infty$, and that there exists a GHP version of Proposition \ref{prop:tightness} (see the convergence in probability part of the proof of Theorem \ref{thmain}), the proof can then be ended just as that of Proposition \ref{prop:cvprobaGH}. $\hfill \square$

%%%%%%%%%%%%%%%%%%%%%%%%%%%%%%%%%%%%%%%%%%%%%%%%%%%%%%%
\section{Stacking the limiting trees}
\label{sec:stack}
%%%%%%%%%%%%%%%%%%%%%%%%%%%%%%%%%%%%%%%%%%%%%%%%%%%%%%%

This section is devoted to the study of $\T_{k,k'}$, seen as a subtree of $\T_k$. We start by giving the distribution of the measured tree $(\T_{k,k'},\mu_{k,k'})$, then move on to prove (\ref{lawsubtree}), which is the last part of Theorem \ref{thjoint}, and then finally show that, even without the construction algorithm, one can extract from $\T_k$ a tree distributed as $\T_{k,k'}$. In Subsections \ref{sec:loinonconservatif} and \ref{sec:extraction} below, some of our arguments rely on specific properties of  fragmentation processes and fragmentation trees. We invite the reader which is not familiar with these topics to refer to \cite{BertoinHomogeneous, BertoinSSF, BertoinBook, HM04, Steph13} for background information.

%%%%%%%%%%%%%%%%%%%%%%%%%%%%%%%%%%%%%
\subsection{The distribution of $(\T_{k,k'},\mu_{k,k'}$)}
\label{sec:loinonconservatif}
%%%%%%%%%%%%%%%%%%%%%%%%%%%%%%%%%%%%%
In Section \ref{sec:cvd}, the distribution of $\T_k$ was obtained by using the main theorem of \cite{HM12}. We would like to do the same with $\T_{k,k'}$, but the issue is that the results of \cite{HM12} are restricted to conservative fragmentations. The aim of this section is therefore to concisely show that the arguments used in \cite{HM12} still apply in our context and prove the last part of Theorem \ref{thjoint}: that $(\T_{k,k'},d,\rho,\mu_{k,k'})$ has the distribution of a fragmentation tree with index $-1/k$ and dislocation measure $\nu^{\downarrow}_{k,k'}$ (defined in Theorem \ref{thjoint}). For reference, we let $(\T^0,d^0,\rho^0,\mu^0)$ be such a fragmentation tree.

To prove this identity in distribution, we will look at the finite-dimentional marginals of $(\T_{k,k'},\mu_{k,k'})$ and $(\T^0,\mu^0)$, in the traditional sense of finite-dimentional marginals for measured $\R$-trees. Specifically, for all integers $l$, let $X_1,\ldots,X_l$ be i.i.d. points of $\T_{k,k'}$ distributed according to $\mu_{k,k'}$ conditionally on $(\T_{k,k'},\mu_{k,k'})$ and $X_1^0,\ldots,X_l^0$ be i.i.d. points of $\T^0$ distributed according to $\mu^0$ conditionally on $(\T^0,\mu^0)$. We will prove that the finite metric spaces $\big((\rho,X_1,\ldots,X_l),d\big)$ and $\big((\rho^0,X_1^0,\ldots,X^0_l),d^0\big)$ have the same distribution for all $l\in\N$, and this will imply that $(\T_{k,k'},\mu_{k,k'})$ and $(\T^0,\mu^0)$ also have the same distribution, because each tree is the completion of the union of its finite-dimensional marginals (this is true because both measures $\mu_{k,k'}$ and $\mu^0$ are fully supported on their respective trees, which itself is true because $(\T_k,\mu_k)$ and $(\T^0,\mu^0)$ are fragmentation trees with infinite dislocation measures -- see \cite[Theorem 1]{HM04}, and moreover the property of having full support is conserved under projection). We already know that $n^{-1/k}\big(T_n(k,k'),\mu_n(k,k')\big)$ converges to $(\T_{k,k'},\mu_{k,k'})$ in probability for the GHP-topology, which implies the convergence of the finite-dimensional marginals in distribution. It will therefore suffice to show that the finite-dimensional marginals of $n^{-1/k}\big(T_n(k,k'),\mu_n(k,k')\big)$ converge to those of $(\T^0,\mu^0)$. This is essentially Proposition 30 of \cite{HM12}: we will show by induction on $l\geq 1$ that, if, for all $n\in\Z_+$, $(X_1(n),\ldots,X_l(n))$ are, conditionally on $\big(T_n(k,k'),\mu_n(k,k')\big)$, independent points of $T_n(k,k')$ with distribution $\mu_n(k,k')$, then the space $\big((\rho,X_1(n),\ldots,X_l(n)),d\big)$ converges in distribution to $\big((\rho^0,X_1^0,\ldots,X^0_l),d^0\big)$.

We start with the case where $l=1$, where, just as in Lemma 28 in \cite{HM12}, the result is a consequence of Theorem 2 of \cite{HM11}. For $n\in\Z_+$, let $X(n)\in T_n(k,k')$ have distribution $\mu_n(k,k')$. Note that the only information contained in the metric space $\big((\rho,X(n)),d\big)$ is the the height of $X(n)$, so we set out to prove the convergence in distribution of this height, when rescaled, to the height of a point of $\T^0$ with distribution $\mu^0$. This height can be explicited with the help of a non-increasing Markov chain: follow the path from the root to $X(n)$ and, at every point, out of the $k'$ subtrees of $T_n(k,k')$ rooted at that point, take the $\mu_n(k,k')$-mass of the one containing $X(n)$, multiplied by $(k-1)n+1$ to make an integer, with two exceptions: for the root, where there is only one subtree, take the value $(k-1)n+1$, and when we reach $X(n)$, take the value $0$. This is indeed a decreasing Markov chain on $\Z_+$ because of the Markov branching property, its initial value is $(k-1)n+1$ and its transition probabilities $p_{a,b}$ (with $b\leq a$) do not depend on $n$ and are best described by the following. First let $(q'_{n-1})^{\downarrow}$, in analogous fashion to $\bar{q}'_{n-1}$ from Section \ref{sec:asympqn}, be the distribution of the reordering of the first $k'$ terms of a sequence with distribution $q_{n-1}$. When the Markov chain is at $(k-1)n+1\in \mathbb N$, take a variable $\lambda$ with distribution $(q'_{n-1})^{\downarrow}$, jump to value $(k-1)\lambda_i+1$ with probability $((k-1)\lambda_i+1)/((k-1)n+1)$ and jump to $0$ with the complementary probability. The height of $X(n)$ is then the time at which this Markov chain reaches $0$. For any function $f$, setting $r_n=(k-1)n+1$, we obtain
\begin{align*}
\sum_{b=0}^{r_n} &f\bigg(\frac{b}{r_n}\bigg) p_{r_n,b}=\\
                 &\sum_{\lambda=(\lambda_1,...,\lambda_{k'}) \in \mathbb Z_+^{k'}:\sum_{i=1}^{k'}\lambda_i \leq n-1}^{k'}(q'_{n-1})^{\downarrow}(\lambda)\Bigg(\sum_{i=1}^{k'} \Big(\frac{r_{\lambda_i}}{r_n}\Big)f\Big(\frac{r_{\lambda_i}}{r_n}\Big)+\Bigg(1-\sum_{i=1}^{k'}\frac{r_{\lambda_i}}{r_n}\Bigg)f(0)\Bigg).
\end{align*}
With this and Corollary \ref{coro:cvmes}, it then follows that the measure $n^{1/k}(1-x)\sum_{b=0}^{r_n} p_{r_n,b} \delta_{b/{r_n}}(\mathrm{d}x)$ converges weakly to $$\int_{\s_{k'}} \bigg(\sum_{i=1}^{k'}(1-s_i)s_i\delta_{s_i} +\bigg(1-\sum_{i=1}^{k'}s_i \bigg)\delta_0\bigg) \nu_{k,k'}^{\downarrow}(\mathrm{d}\mathbf{s}).$$ Theorem 2 of \cite{HM11} is then applicable and shows that, when renormalized by $n^{-1/k}$, the height of $X(n)$ does converge in distribution to the height of a point of $\T^0$ with distribution $\mu^0$, which can be written $\int_0^\infty \e^{-\xi_t/k}\mathrm{d}t$, where $(\xi_s)_{s\geq0}$ is a subordinator with Laplace exponent defined for $q\geq 0$ by $\int_{\s_{k', \leq}} (1-\sum_{i=1}^{k'}s_i^{q+1})  \nu^{\downarrow}_{k,k'}(\mathrm d \mathbf{s})$.

Now take $l>1$ and assume that the convergence of $l'$-dimensional marginals has been proven for all $l'< l$. For $n\in\N$, take $(Y_1(n),\ldots,Y_l(n))$ to be i.i.d. uniform leaves of $T_n(k)$, conditioned to being all different, an event which has probability tending to $1$, and let $(X_1(n),\ldots,X_l(n))$ be their projections on $T_n(k,k')$. Lemma 29 of \cite{HM12} still applies, namely: the height of the branch point of $(X_1(n),\ldots,X_l(n))$ converges in distribution to the height of the branch point of $l$ variables in $\T^0$ which are i.i.d. with distribution $\mu^0$  
and this convergence holds jointly with the masses of the subtrees containing the $(X_i(n))_{i\in[l]}$ above this branch point, as well as the allocations of the $(X_i(n))_{i\in[l]}$ in these subtrees. Applying the induction hypothesis and the self-similarity property of $\T^0$ on each of these subtrees then ends the proof.
The fact that Lemma 29  of \cite{HM12} still holds will be left for the reader to check: it hinges on the sublemmas 26 and 27 of \cite{HM12}, which require some modification to account for non-conservativeness. These details will appear in \cite{these}. $\hfill \square$
%%%%%%%%%%%%%%%%%%%%%%%%%%%%%%%%%%%%%
\subsection{Proof of (\ref{lawsubtree})}
\label{sec:ML}
%%%%%%%%%%%%%%%%%%%%%%%%%%%%%%%%%%%%%

For $n\geq 0$, let $I_n$ denote the number of internal nodes of $T_n(k)$ which are in $T_{n}(k,k')$. 

\begin{lemma} 
\label{Lemmalaw}
One has
$$\left(T_n(k,k'), n\geq 0 \right)=\big(\tilde T_{I_n}(k'), n \geq 0 \big),$$ where $(\tilde T_{i}(k'),i\geq 0)$  is a sequence distributed as $(T_{i}(k'),i\geq 0)$  and independent of $(I_n,n\geq 0)$. Moreover, $(I_n,n\geq 0)$ is a Markov chain with transition probabilities
$$
\mathbb P\left(I_{n+1}=i+1 \ | \ I_n=i \right)=1-\mathbb P\left(I_{n+1}=i \ | \ I_n=i \right)=\frac{k' i+1}{kn+1},
$$
and as a consequence,
$$
\frac{I_n}{n^{k'/k}} \overset{\mathrm{a.s.}} \longrightarrow M_{k'/k,1/k},
$$
where the limit is a $(k'/k,1/k)$-generalized Mittag-Leffler random variable. 
\end{lemma}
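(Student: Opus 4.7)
The plan is to analyze the recursive construction of $T_n(k)$ directly, keeping track of which edges belong to the subtree $T_n(k,k')$. Since $T_n(k,k')$ is a $k'$-ary tree with $I_n$ internal nodes, it contains exactly $k'I_n+1$ edges. At step $n+1$, the algorithm picks one of the $kn+1$ edges of $T_n(k)$ uniformly at random, so conditionally on $T_n(k)$ the probability that the selected edge lies in $T_n(k,k')$ is $(k'I_n+1)/(kn+1)$. When this happens, the selected edge is split and, among the $k-1$ new edges labelled from $2$ to $k$ uniformly at random, exactly $k'-1$ receive labels in $\{2,\ldots,k'\}$ and are added to $T_{n+1}(k,k')$, together with the edge labelled $1$ arising from the split. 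This is precisely one step of the $k'$-ary growing procedure and adds one internal node to the subtree. If, on the other hand, the selected edge lies outside $T_n(k,k')$, then $T_{n+1}(k,k')=T_n(k,k')$. This establishes the announced transition probabilities for $(I_n)$.

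For the first identity, let $\tau_i := \inf\{n\geq 0 : I_n = i\}$ be the $i$-th jump time of $(I_n)$ and set $\tilde T_i(k') := T_{\tau_i}(k,k')$, so that $T_n(k,k')=\tilde T_{I_n}(k')$ for all $n$ by construction. The key observation is that, conditionally on the event $\{I_{n+1}=I_n+1\}$ and on $T_n(k,k')$, the selected edge is uniform on the edges of $T_n(k,k')$: this is just the fact that the uniform measure on edges of $T_n(k)$, conditioned on lying in $T_n(k,k')$, is the uniform measure on the edges of $T_n(k,k')$. Combined with the uniform assignment of the labels $2,\ldots,k$ to the new edges, we obtain that conditionally on the whole sequence $(I_n)_{n\geq 0}$, the process $(\tilde T_i(k'))_{i\geq 0}$ evolves exactly according to the $k'$-ary growing rule. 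Since this conditional law does not depend on the realisation of $(I_n)$, we deduce that $(\tilde T_i(k'))_{i\geq 0}$ is distributed as $(T_i(k'))_{i\geq 0}$ and independent of $(I_n)$.

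For the almost sure convergence, I would recognize $(I_n)$ as the number-of-tables process in the Chinese restaurant with parameters $(k'/k,1/k)$. In such a restaurant with $K_n$ tables after $n$ customers, the $(n+1)$-th customer opens a new table with probability
\[
\frac{1/k + K_n\cdot k'/k}{1/k + n} \;=\; \frac{k'K_n + 1}{kn+1},
\]
which matches exactly the transition of $(I_n)$. Since $I_0=K_0=0$, the two chains are equal in law, and Theorem~3.8 of \cite{PitmanStFl} (already invoked in Proposition~\ref{prop:cvmarginalesGH}) immediately yields $I_n/n^{k'/k}\to M_{k'/k,1/k}$ almost surely.

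The only point requiring some care is the independence claim: one needs the conditional-independence argument combining the uniform selection of edges in $T_n(k)$ with the uniform labelling of new edges, so that on the event that the next jump occurs at time $n+1$, the identity of the selected edge in $T_n(k,k')$ and the labels of the freshly created edges are independent of the location of that jump time. Once this is isolated, the remaining ingredients are straightforward bookkeeping and a direct appeal to the known Mittag-Leffler limit theorem for Chinese restaurant processes.
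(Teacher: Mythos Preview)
Your proof is correct and follows essentially the same approach as the paper's: you count edges to get the transition probability $(k'I_n+1)/(kn+1)$, define $\tilde T_i(k')$ via the first-passage times of $(I_n)$, use the conditional uniformity of the selected edge on $T_n(k,k')$ to obtain the independence, and identify $(I_n)$ with the number of tables in a $(k'/k,1/k)$ Chinese restaurant process to invoke Pitman's Theorem~3.8. The paper's proof is organized identically, with slightly less detail on the edge-labelling step.
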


\bigskip

We recall that a generalized Mittag-Leffler random variable $M_{\alpha,\theta}$ with parameters $\alpha \in (0,1)$ and $\theta>-\alpha$ has its distribution characterized by its positive moments, given by
\begin{equation*}
\mathbb E\left[M^p_{\alpha,\theta}\right]=\frac{\Gamma(\theta+1)\Gamma(\theta/\alpha+p+1)}{\Gamma(\theta/\alpha+1)\Gamma(\theta+p\alpha+1)}, \quad p \geq 0.
\end{equation*}

\bigskip

\noindent \textbf{Proof.} This proof is very similar to  those of Lemma 8 and Lemma 9 of \cite{CH13}. Given $T_{i}(k)$ and $T_{i}(k, k')$ for $0 \leq i \leq n$, the new node added to get $T_{n+1}(k)$ from $T_n(k)$ will belong to $T_{n+1}(k, k')$ if  and only if the selected edge is in $T_{n}(k, k')$, which occurs with probability $(k' I_n+1)/(kn+1)$ since  $k' I_n+1$ is the number of edges of $T_{n}(k, k')$ and $kn+1$ that of $T_{n}(k)$. Moreover, conditionally to the fact that this new node belongs to $T_{n+1}(k, k')$, it is located uniformly at random on one of the edges of  $T_n(k,k')$, independently of the whole process $(I_n, n \geq 0)$ and of  $T_i(k,k')$ for $0 \leq i \leq I_{I_n}^{-1}-1$ where $I_m^{-1}:=\inf\{n \geq 0:I_n=m\}$, $m\geq 0$.
From this, it should be clear that the process defined for all $i \geq 0$ by $$\tilde T_i(k')=T_{I_i^{-1}}(k, k')$$ 
is distributed as $(T_i(k'),i \geq 0)$ and independent of $(I_n, n \geq 0)$. Moreover, we have that $T_n(k,k')=\tilde T_i(k')$ if  $I_n=i$, hence $T_n(k,k')=\tilde T_{I_n}(k')$. 

Lastly, the few lines above show that $(I_n,n \geq 0)$ is a Markov chain with the expected transition probabilities. It turns out that these probabilities are identical to those of the number of tables in a $(k'/k,1/k)$ Chinese restaurant process. Therefore, using again Theorem 3.8 in \cite{PitmanStFl}, $n^{-k'/k}I_n$ converges almost surely towards a $(k'/k,1/k)$-generalized Mittag-Leffler random variable. 
$\hfill \square$

\bigskip

\noindent \textbf{Proof of (\ref{lawsubtree}).} This is a straightforward consequence of the joint convergence in probability settled in Theorem \ref{thjoint} and of Lemma \ref{Lemmalaw}. Indeed, we know that
$$
\left(\frac{T_{n}(k)}{n^{1/k}}, \frac{T_{n}(k, k')}{n^{1/k}}\right) \overset{\mathbb P}{\underset{n \rightarrow \infty} \rightarrow} \left(\mathcal T_k , \mathcal T_{k,k'} \right)
$$
(we are not interested in measures on trees here). Then, for $n \geq 1$,
$$
\frac{T_{n}(k, k')}{n^{1/k}}=\frac{T_{n}(k, k')}{I_{n}^{1/k'}} \times \left(\frac{I_{n}}{n^{k'/k}}\right)^{1/k'}.
$$
On the one hand, the left hand side converges in probability towards $\mathcal T_{k,k'}$. On the other hand, by Lemma \ref{Lemmalaw} and since $I_{n}$ converges a.s. to $+\infty$, 
$$ 
\frac{T_{n}(k, k')}{I_{n}^{1/k'}}  \overset{\mathbb P}{\underset{n \rightarrow \infty} \rightarrow} \tilde{\mathcal T}_{k'},
$$ 
where $ \tilde{\mathcal T}_{k'}$ is distributed as $\mathcal T_{k'}$.
Moreover this holds independently of the a.s. convergence of $I_{n}/n^{k'/k}$ towards the generalized Mittag-Leffler r.v. $M_{k'/k,1/k}$. The result follows by identification of the limits.
$\hfill \square$

\bigskip

Actually, following the ideas of the proof of Theorem  15 in \cite{CH13}, we can reinforce the identity in distribution (\ref{lawsubtree}) in an identity of distribution of measured trees.
For the sake of brevity, we do not state this additional result here and refer the interested reader to \cite[Theorem 15]{CH13} for a similar result in the context of stable Lévy trees, which can easily be adapted to our context.

%%%%%%%%%%%%%%%%%%%%%%%%%%%%%%%%%%%%%%%%%%%%%
\subsection{Extracting a tree with distribution $\mathcal T_{k'}$ from $\mathcal T_{k}$}
\label{sec:extraction}
%%%%%%%%%%%%%%%%%%%%%%%%%%%%%%%%%%%%%%%%%%%%%

We know from the discrete approximation that there is a subtree of $\mathcal T_{k}$ which is distributed as \linebreak $M^{1/k'}_{k'/k,1/k}\cdot \mathcal T_{k'}$ (or, equivalently, as a fragmentation tree with index $-1/k$ and dislocation measure $\nu^{\downarrow}_{k,k'}$). Our goal is now to explain how to extract such a tree directly from $\mathcal T_{k}$. Our approach strongly relies on the fact that $(\mathcal T_k,\mu_k)$ is a fragmentation tree. 

As a fragmentation tree, $\mathcal T_{k}$ has  a countable number of branch points, almost surely. We denote this set of branch points $\{b(n),n\in \mathbb N\}$.
For each $n\in \mathbb N$, we recall that $$\mathcal T_{b(n)}=\{v \in \mathcal T_k: b(n) \in [[\rho,v]]\}$$ is the subtree of descendants of $b(n)$ ($\rho$ denotes the root of $\mathcal T_k$). Since $\mathcal T_k$ is $k$-ary, the set $\mathcal T_{b(n)} \backslash\{b(n)\}$ has exactly $k$ connected components. We label them as follows: $\mathcal T_{b(n),1}$ is the connected component with the largest $\mu_k$-mass, $\mathcal T_{b(n),2}$ is the connected component with the second largest $\mu_k$-mass, and so on (if two or more trees have the same mass, we label them randomly). 

For $n \in \mathbb N$ and $i=1,...,k$, let $$s_i(n)=\frac{\mu_k(\mathcal T_{b(n),i})}{\mu_k(\mathcal T_{b(n)})}.$$ Almost surely, for all $n \in \mathbb N$, these quotients are well-defined, strictly positive and sum to 1 (see \cite{HM04, Steph13}). We then mark the sequences $\mathbf s(n)$, \textit{independently} for all $n \in \mathbb N$, by associating  to each sequence $\mathbf{s} \in \mathcal S_k$ an element $\mathbf{s}^* \in \mathcal S_{k', \leq}$ by deciding that for all $1 \leq i_1<...<i_{k'}\leq k$
\begin{equation}
\label{mark}
(s^{*}_1,...,s^{*}_{k'})=(s_{i_1},...,s_{i_{k'}}) \text{ with probability } 
\frac{(k'-1)!(k-k')!}{(k-1)!} \frac{\sum_{j \in \{i_1,...,i_k'\}} \prod_{1\leq i \neq j \leq k}(1-s_i)}{\sum_{j=1}^k \prod_{1\leq i \neq j \leq k}(1-s_i)}.
\end{equation}
This means that we attribute a weight $ \prod_{i \neq j}(1-s_i)$ to the $j$th term of the sequence $\mathbf s$, for all $1\leq j \leq k$, and then choose at random a  $k'$-uplet of terms (with strictly increasing indices) with a probability proportional to the sum of their weights. One can easily check that, for any sequence $\mathbf s$, the quotient in (\ref{mark}) indeed defines a probability distribution since 
$(k-1)!/((k'-1)!(k-k')!)$ is the number of  $k'$-uplets $(i_1,...,i_{k'})$, with $1 \leq i_1<...<i_{k'}\leq k$, containing a given integer $j \in \{1,...,k\}$.
For $n \in \mathbb N$, if $(s^{*}_1(n),...,s^{*}_{k'}(n))=(s_{i_1}(n),...,s_{i_{k'}}(n))$, we then let $${\mathcal T}^{*}_{b(n)}=\bigcup_{j \in \{1,...,k\} \backslash \{i_1,...,i_{k'}\}}\mathcal T_{b(n),j}.$$ Finally we set 
\begin{equation}
\label{def:Tkk'}
\mathcal T_{k,k'}^*=\mathcal T_k \backslash \bigcup_{n \in \mathbb N} {\mathcal T}^{*}_{b(n)}. 
\end{equation}
In words, $\mathcal T_{k,k'}^*$ is obtained from $\mathcal T_k$ by removing all groups of trees ${\mathcal T}^{*}_{b(n)}$ for $n \in \mathbb N$. This tree (which is well-defined almost surely) has the required distribution: 

\begin{prop} 
\label{prop:prun}
The tree $\mathcal T_{k,k'}^*$ is a non-conservative fragmentation tree, with index of self-similarity $-1/k$ and dislocation measure $\nu^{\downarrow}_{k,k'}$.
\end{prop}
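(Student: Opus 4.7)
The strategy is to exploit the self-similar fragmentation structure of $(\mathcal T_k,\mu_k)$ obtained in Theorem \ref{thmain}, combined with the independence of the marks $\mathbf{s}^*(n)$ across the branch points $b(n)$. I would proceed in two main steps: first argue that the pruning (\ref{def:Tkk'}) gives a non-conservative self-similar fragmentation tree, then compute its dislocation measure explicitly.

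For the first step, I would rely on the Poissonian representation of the fragmentation process associated with $\mathcal{T}_k$: ranking the connected components of $\mathcal{T}_k\cap\{v:ht(v)>t\}$ by $\mu_k$-mass, the dislocation jumps correspond bijectively to the branch points $(b(n))_{n\in\mathbb N}$, and form a Poisson point process driven by $\nu^{\downarrow}_k$ (see \cite{BertoinHomogeneous, BertoinSSF, BertoinBook}). Since (\ref{mark}) specifies a Markov kernel $K(\mathbf{s},\cdot)$ applied independently to each $\mathbf{s}(n)$, the marking theorem for Poisson point processes yields that $(b(n),\mathbf{s}(n),\mathbf{s}^*(n))_{n\in\mathbb N}$ is still Poissonian, and its $\mathbf{s}^*$-projection drives the fragmentation process $F^*_t$ associated with $\mathcal T_{k,k'}^*$ (the $\mu_k$-masses of the connected components of $\mathcal T_{k,k'}^*\cap\{v:ht(v)>t\}$). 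This produces a non-conservative self-similar fragmentation of index $-1/k$ whose dislocation measure is the pushforward
\[
\widetilde\nu(\mathrm d\mathbf{s}^*) = \int K(\mathbf{s},\mathrm d\mathbf{s}^*)\,\nu^{\downarrow}_k(\mathrm d\mathbf{s}),
\]
and by the correspondence between self-similar fragmentations and fragmentation trees (\cite{HM04, Steph13}) the tree it encodes is $\mathcal T_{k,k'}^*$ endowed with the restriction of $\mu_k$.

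For the second step, I would verify that $\widetilde\nu = \nu^{\downarrow}_{k,k'}$ by testing both against a symmetric bounded measurable function $f$ on $\mathcal S_{k',\leq}$. The key simplification is that dividing numerator and denominator of (\ref{mark}) by $\prod_i(1-s_i)$ expresses the marking weight as $\sum_{j\in\{i_1,\dots,i_{k'}\}}\frac{1}{1-s_j}$ divided by $\sum_{j=1}^k\frac{1}{1-s_j}$, whose denominator cancels the factor $\sum_j 1/(1-s_j)$ present in $\nu^{\downarrow}_k$. Symmetrizing the integrand to the full simplex $\mathcal S_k$, all $\binom{k}{k'}$ choices of selected subset contribute identically, reducing the computation to integrating out the $k-k'$ discarded coordinates over $\{s_{k'+1}+\dots+s_k=1-\sum_{i=1}^{k'}s_i\}$. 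Applying the Dirichlet identity (\ref{beta}) with $K=k-k'$ and $\alpha_i=1/k$ produces the factor $\Gamma(1/k)^{k-k'}(1-\sum_{i=1}^{k'}s_i)^{-k'/k}/\Gamma(1-k'/k)$. Re-restricting to the ordered simplex in $\mathcal S_{k',\leq}$ yields an extra $k'!$, and collecting the prefactors recovers exactly the density of $\nu^{\downarrow}_{k,k'}$ stated in Theorem \ref{thjoint}.

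The main obstacle is the first step: making the Poissonian marking argument rigorous in the non-conservative setting, in particular verifying that the induced measure and the self-similarity property survive the loss of mass at each branch point. An alternative, possibly cleaner route would be to bypass this and directly check that the finite-dimensional marginals of $(\mathcal T_{k,k'}^*,\mu_k|_{\mathcal T_{k,k'}^*})$ agree with those of a fragmentation tree with index $-1/k$ and dislocation measure $\nu^{\downarrow}_{k,k'}$, exactly along the lines of Section \ref{sec:loinonconservatif}.
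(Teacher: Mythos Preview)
Your proposal is correct and follows essentially the same approach as the paper: Poissonian marking of the fragmentation driving $\mathcal T_k$, followed by computing the pushforward dislocation measure via the Dirichlet identity (\ref{beta}). The paper resolves your stated obstacle by working with the partition-valued Poisson point process on $\mathcal P_{\mathbb N}\times\mathbb N$ (intensity $\kappa_{\nu_k^{\downarrow}}\otimes\#$) rather than the mass-sequence process, which lets one track individual sampled leaves $A_i$, define their truncations $A_i^*$, and identify $\mathcal T_{k,k'}^*$ as the closure of $\bigcup_i[[\rho,A_i^*]]$, i.e.\ as the genealogy tree of the marked partition-valued fragmentation $\Pi^*$.
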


\begin{proof}
Let $\mathcal P_{\mathbb N}$ denote the set of partitions of $\mathbb N$, and equip it with the distance $$d_{\mathcal P_{\mathbb N}}(\pi,\pi')=\exp\big(-\sup\{k \geq 1:\pi |_{[k]}=\pi' |_{[k]}\}\big)$$ where $(\pi,\pi')$ denote any pair of partitions of $\mathbb N$, and $\pi |_{[k]}$, $\pi' |_{[k]}$ their respective restrictions to the $k$ first positive integers.
Let  then $(A_i)_{i\in\N}$ be an exchangeable sequence of leaves of $\mathcal T_k$ directed by $\mu_k$. 
Define from it a càdlàg partition-valued process $(\Pi(t))_{t\geq0}$ by declaring, for $t\geq0$, that two different integers $i$ and $j$ are in the same block of $\Pi(t)$ if $A_i$ and $A_j$ are in the same connected component of $\{x\in\T_k,ht(x)>t\}$. According to \cite[Section 2.3]{HM04} or \cite[Proposition 3.1]{Steph13}, this is a partition-valued fragmentation process with dislocation measure $\nu^{\downarrow}_k$ and self-similarity index $-1/k$ (and no erosion). We thus know thanks to \cite{BertoinHomogeneous, BertoinSSF} that, up to a family of suitable time-changes (that we do not recall here), the process $\Pi$ can be constructed from a Poisson point process $\big((\Delta(s),i(s)),{s\geq0}\big)$ on $\mathcal P_{\mathbb N} \times \mathbb N$, with  intensity measure $\kappa_{\nu^{\downarrow}_k} \otimes \#$, where $\#$ denotes the counting measure on $\mathbb N$ and $\kappa_{\nu^{\downarrow}_k}$ is a $\sigma$-finite measure on $\mathcal P_{\mathbb N}$ defined by 
$$
\kappa_{\nu^{\downarrow}_k}(\mathrm d \pi)=\int_{\mathcal S_k} \kappa_{\mathbf s}(\mathrm d \pi) \nu^{\downarrow}_{k}(\mathrm d \mathbf s)
$$
where $\kappa_{\mathbf s}$ denotes the exchangeable probability on $\mathcal P_{\mathbb N}$ with paintbox $\mathbf s$.

The connection between the Poisson point process $\big((\Delta(s),i(s)),{s\geq0}\big)$ and the tree $\mathcal T_k$ can be partially  summarized as follows (the following assertions hold almost surely). There is a bijection between the set  of atoms of this Poisson point process  and the set of branch points of $\mathcal T_k$. For each atom $(\Delta(s),i(s))$, let $b(n_{\Delta(s),i(s)})$ be the corresponding branch point, with $n_{\Delta(s),i(s)} \in \mathbb N$. There exists then an infinite subsequence  $(A_{i_m},m \in \mathbb N)$ of $(A_i,i\in \mathbb N)$ composed by the leaves that belong to $\mathcal T_{b(n_{\Delta(s),i(s)})}$. Then, two integers $m_1$ and $m_2$ are in a same block of $\Delta(s)$ if and only if $A_{i_{m_1}}$ and $A_{i_{m_2}}$ are in a same subtree $\mathcal T_{b(n_{\Delta(s),i(s)}),j}$ of $\mathcal T_{b(n_{\Delta(s),i(s)})}$ for some $1\leq j \leq k$. For more details (the roles of the integers $i(s)$ and time $s \geq 0$) we refer to \cite{HM04}.
We decide to label the $k$ blocks of $\Delta(s)$ according to the indices of the corresponding subtrees  $\mathcal T_{b(n_{\Delta(s),i(s)}),j}, 1 \leq j\leq k$. 

We then mark the Poisson point process as follows: for each atom $(\Delta(s),i(s))$, we extract randomly  $k'$ blocks of $\Delta(s)$ by setting $$(\Delta^{*}_1(s),...,\Delta^{*}_{k'}(s))=(\Delta_{i_1}(s),...,\Delta_{i_{k'}}(s))$$ if $$(s^{*}_1(n_{\Delta(s),i(s)}),...,s^{*}_{k'}(n_{\Delta(s),i(s)}))=(s_{i_1}(n_{\Delta(s),i(s)}),...,s_{i_{k'}}(n_{\Delta(s),i(s)})),$$
where the sequence $\mathbf s^*(n_{\Delta(s),i(s)})$ is  the one obtained from $\mathbf s(n_{\Delta(s),i(s)})$ by the marking procedure (\ref{mark}). Then, we make $\Delta^*(s)$ into a partition of $\N$ with dust by putting every integer which is not originally in a block $\Delta_1^*(s),...,\Delta_{k'}^*(s)$ into a singleton. The process $\big((\Delta^*(s),i(s)),{s\geq0}\big)$ is  therefore a marked Poisson point process with intensity 
$\kappa_{\nu^{\downarrow,*}_{k}} \otimes \#$, where
$$
\kappa_{\nu^{\downarrow,*}_{k}}(\mathrm d \pi)=\int_{\mathcal S_{k',\leq}} \kappa_{\mathbf s}(\mathrm d \pi) {\nu^{\downarrow,*}_k}(\mathrm d \mathbf s) \quad \text{and} \quad  \int_{\mathcal S_{k',\leq}} f (\mathbf s) \nu^{\downarrow,*}_{k}(\mathrm d \mathbf s)=\int_{\mathcal S_k} \mathbb E[f (\mathbf s^*)] {\nu^{\downarrow}_k}(\mathrm d \mathbf s),
$$
for all suitable test functions $f$. Now, the key-point is that 
$$
 \nu^{\downarrow,*}_{k}= \nu^{\downarrow}_{k,k'}.
$$
This is easy to check by using the definitions of $\nu^{\downarrow}_{k}$, $\nu^{\downarrow}_{k,k'}$ and of the marking procedure (\ref{mark}), together with the identity (\ref{beta}). The details of this calculation are left to the reader.

To finish, let $\Pi^*$ be the $(-1/k, \nu^{\downarrow}_{k,k'})$-fragmentation process derived from the Poisson point process $\big((\Delta^*(s),i(s)),{s\geq0}\big)$. For all $i\in\N$, let $D^*_i=\inf\{t\geq0, \{i\}\in\Pi^*(t)\}$ and note that $D^*_i \leq D_i$, where $D_i:=\inf\{t\geq0, \{i\}\in\Pi(t)\}$ is the height of $A_i$ in $\mathcal T_k$.
Let then $A_i^*$ be the unique point of $\T_k$ belonging to the geodesic $[[\rho,A_i]]$ which has height $D^*_i$. It is not hard to see that   $\T^*_{k,k'}$, defined by (\ref{def:Tkk'}), is the closure of the subtree $\cup_{i \geq 1} [[\rho,A_i^*]]$ of $\mathcal T_k$ spanned by the root and all the vertices $A^*_i$ (almost surely). But by definition (see \cite{Steph13}), this closure is the genealogy tree of $\Pi^*$. Thus $\T^*_{k,k'}$ has the distribution of a $(-1/k, \nu^{\downarrow}_{k,k'})$-fragmentation tree.
\end{proof}

\appendix

%%%%%%%%%%%%%%%%%%%%%%%%%%%%%%%%%%%%%%%%%%%%%%%%%%%%%%%%%%
\section{GHP-convergence of discrete trees with edge-lengths}
\label{sec:A1}
%%%%%%%%%%%%%%%%%%%%%%%%%%%%%%%%%%%%%%%%%%%%%%%%%%%%%%%%%%

Let $T$ be a rooted finite graph-theoretical tree: we think of it as a set of vertices equipped with a set of edges $E$. For any strictly positive function $l$ on $E$, we let $T_l$ be the $\R$-tree obtained from $T$ by considering every edge $e$ as a line segment with length $l(e)$, and call $d_l$ its metric. In the following, we will sometimes need to embed such trees in $\ell^1$, simultaneously for several different functions $l$. In order to do this in a way which lets us simply compare the trees, we first label the leaves of $T$ (labels that are of course transposed to $T_l$ for all $l$) and then use the stick-breaking construction of Aldous, as recalled at the end of the proof of Proposition \ref{prop:cvmarginalesGH}. 

\begin{lemma}
\label{lem:cvGHarbresdiscrets}  
Let $(l_n)_{n\in\N}$ be a sequence of strictly positive functions on $E$ and assume that, for all $e\in E$, $l_n(e)$ which converges to a strictly positive number $l(e)$ as $n$ goes to infinity. We then have
	\[T_{l_n} \underset{n\to\infty}{\overset{\mathrm{GH}}\longrightarrow} T_l.
\]
With the extra assumption that no vertices of $T$ have degree $2$, it is then in fact sufficient to know that, for all leaves $L$ and $L'$ of $T$, $d_{l_n}(L,L')$ and $d_{l_n}(\rho,L)$ converge respectively to $d_l(L,L')$ and $d_l(\rho,L)$.

Moreover, with the $\ell^1$-embedded versions of the trees, we have Hausdorff convergence in $\ell^1$.

\end{lemma}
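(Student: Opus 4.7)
The plan is to prove the three assertions in succession, each building on the previous one through an explicit combinatorial bijection between trees that share the same underlying graph $T$.

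First I would establish the Gromov--Hausdorff convergence via the natural identity map on $T$. Since $T_{l_n}$ and $T_l$ both arise from the same finite graph, there is a canonical bijection $\phi_n: T_{l_n}\to T_l$ sending each point of an edge $e$ to the point at the same relative position in the corresponding edge of $T_l$. For any $x,y\in T_{l_n}$, the path $[[x,y]]$ traverses at most $|E|$ edges, and splitting the distance edge by edge gives
\[
\bigl|d_{l_n}(x,y) - d_l(\phi_n(x),\phi_n(y))\bigr| \;\leq\; \sum_{e\in E}|l_n(e) - l(e)|,
\]
since the affine identification preserves the proportion along each edge. As $E$ is finite and $l_n(e)\to l(e)$ for each $e$, the right-hand side tends to $0$, yielding a root-preserving correspondence of vanishing distortion, hence GH convergence.

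Next I would reduce the second assertion (only leaf-to-leaf and root-to-leaf convergence, under the no-degree-$2$ assumption) to the first by recovering each edge length algebraically from these distances. Under the hypothesis, every non-root internal vertex $v$ has degree at least $3$, so it has at least two children, and one can pick leaves $L,L'$ of $T$ in distinct child-subtrees of $v$, which ensures $v=L\wedge L'$. The standard three-point identity then gives
\[
d_l(\rho,v) \;=\; \tfrac12\bigl(d_l(\rho,L)+d_l(\rho,L')-d_l(L,L')\bigr),
\]
so each vertex height $d_l(\rho,v)$ (and thus each edge length, as a difference of heights of consecutive vertices along a root-to-leaf path) is an explicit continuous function of leaf-to-leaf and root-to-leaf distances. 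Convergence of those distances therefore forces $l_n(e)\to l(e)$ for every $e\in E$, and the first claim applies.

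Finally, for the Hausdorff convergence of the stick-breaking embeddings in $\ell^1$, the crucial observation is that the combinatorial structure of $T$ is fixed: for every labelled leaf $L^i$, the branching point $H^i$ at which $[[\rho,L^i]]$ splits off from $\bigcup_{j<i}[[\rho,L^j]]$ sits on a combinatorially determined edge of $T$, independently of $l$. Hence the image of $H^i$ in $\ell^1$ uses only the first $i-1$ coordinates and is an affine function of finitely many edge lengths along $[[\rho,H^i]]$. Proceeding inductively on $i$, the embedded position of every vertex of $T$, and therefore every coordinate-parallel segment making up the embedded tree, depends continuously on the vector $(l(e))_{e\in E}$. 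Combined with the finiteness of $E$, this gives
\[
d_{\ell^1,\mathrm H}(T_{l_n},T_l) \;\leq\; C\,\max_{e\in E}|l_n(e)-l(e)|
\]
for a constant $C=C(T)$, whence the Hausdorff convergence. The main (mild) obstacle is precisely this bookkeeping for the points $H^i$: one must verify that freezing the combinatorics makes the whole stick-breaking algorithm a Lipschitz function of $l$, after which everything collapses.
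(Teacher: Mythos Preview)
Your argument is correct, but it is organized less economically than the paper's. The paper observes that the $\ell^1$ stick-breaking embeddings satisfy the single inequality
\[
d_{\ell^1,\mathrm H}(T_{l_n},T_l)\le \sum_{e\in E}|l_n(e)-l(e)|,
\]
which simultaneously gives the Hausdorff convergence in $\ell^1$ and, as an immediate consequence, the GH convergence. You instead prove GH first via a distortion bound for the natural correspondence (with the same right-hand side), and then separately argue Lipschitz dependence of the stick-breaking embedding to get the $\ell^1$ Hausdorff statement; this duplicates work, since the $\ell^1$ bound already implies GH. For the second assertion both proofs are the same in substance: the paper simply says that leaf-to-leaf and root-to-leaf distances determine the whole metric when there are no degree-$2$ vertices, and your three-point formula $d(\rho,v)=\tfrac12(d(\rho,L)+d(\rho,L')-d(L,L'))$ is exactly how one makes that remark explicit.
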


\begin{proof} For the first point, we just need to prove the  Hausdorff convergence in $\ell^1$ of the embedded versions of the trees. For this, one only needs to notice that $$d_{\ell^1,\mathrm{H}}(T_{l_n},T_l)\leq \sum_{e\in E} |l_n(e)-l(e)|,$$ which converges to $0$. The proof of the second point is merely a matter of noticing that, if we know the distances between the leaves (including the root), we can recover the whole metric on a tree.
\end{proof}

\medskip

We now recall a result of \cite{Steph13} which gives us a practical way of building measures on a compact $\R$-tree. Let $\T$ be any compact rooted tree and $m$ a nonnegative function on $\T$. We say that $m$ is decreasing if, for all $x$ and $y$ in $\T$ with $x\in[[\rho,y]]$, we have $m(x)\geq m(y)$. In this case, one can define a left-limit $m(x^-)$ of $m$ at $x$ as 
	\[m(x^-)=\underset{\underset{z\in [[\rho,x[[}{z\to x}}\lim m(z)
\]
(in the case of the root, we simply let $m(\rho^-)=m(\rho)$). One can also define what we call the \emph{additive right-limit}. Recall that $\T_x$ is the subtree of descendants of $x$. Suppose first that $x$ is not a leaf. By compactness,  the space $\T_x\setminus\{x\}$ has countably many connected components, say $(\T_i)_{i\in S}$ for a finite or countable set $S$. Let, for all $i\in S$, $x_i\in\T_i$. We then set 
	\[m(x^+)= \underset{i\in S}\sum \,\underset{\underset{z\in ]]x,x_i]]}{z\to x}}\lim m(z).
\]
If $x$ is a leaf, then we let $m(x^+)=0$.

\begin{lemma}[{\cite[Proposition 2.7]{Steph13}}]
\label{lem:mesuresarbres}
Assume that, for all $x\in\T$, we have $m(x^-)=m(x)\geq m(x^+)$. Then there exists a unique measure $\mu$ on $\T$ such that, for all $x$ in $\T$, we have
	\[\mu(\T_x)=m(x).
\]
We then also have
	\[\mu(\{x\})=m(x)-m(x^+), \quad \forall x \in \T.
	\]
\end{lemma}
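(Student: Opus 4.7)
The plan is to prove uniqueness by a monotone-class argument, and existence by constructing a non-negative, finitely additive set function on the algebra generated by the subtrees $\T_x$ and extending it via Carath\'eodory's theorem; the atom formula then drops out from continuity of the resulting measure. For uniqueness, observe that the collection $\mathcal{C} = \{\T_x : x \in \T\} \cup \{\emptyset\}$ is a $\pi$-system, because for $x,y \in \T$ the intersection $\T_x \cap \T_y$ equals $\T_x$, $\T_y$, or $\emptyset$ according to whether $x$ and $y$ are comparable in the genealogical order induced by the root. Fixing a countable dense subset $D \subset \T$, I would check that every open ball of $\T$ can be recovered as a countable combination of elements of $\{\T_z : z \in D\}$ and their complements; Dynkin's lemma then forces any two measures agreeing on $\mathcal{C}$ to coincide on the Borel $\sigma$-algebra.

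\textbf{Existence: pre-measure on an algebra.} Let $\mathcal{A}$ be the algebra generated by $\mathcal{C}$. Every element of $\mathcal{A}$ is a finite disjoint union of pieces of the shape
\[
B = \T_x \setminus \bigcup_{i=1}^n \T_{y_i},
\]
where $y_1,\dots,y_n$ are pairwise incomparable strict descendants of $x$ (the case $n = 0$ is allowed, and $B = \T$ corresponds to $x = \rho$). I would then set $\tilde\mu(B) = m(x) - \sum_{i=1}^n m(y_i)$, which is non-negative since $m$ is decreasing, and extend additively to disjoint unions. Routine bookkeeping shows this is well-defined (any refinement of $B$ into smaller such pieces gives the same value by a telescoping computation) and finitely additive on $\mathcal{A}$.

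\textbf{Main obstacle: countable additivity at $\emptyset$.} The technical heart of the argument is to check that if $(B_n) \subset \mathcal{A}$ decreases to $\emptyset$, then $\tilde\mu(B_n) \to 0$. Given $\varepsilon > 0$, I would approximate each $B_n$ from within by a closed (hence compact, since $\T$ is compact) subset $K_n \subset B_n$ with $\tilde\mu(B_n \setminus K_n) < \varepsilon 2^{-n}$. The construction replaces every subtracted subtree $\T_y$ appearing in the description of $B_n$ by a slightly larger closed subtree $\T_{y'}$ with $y' \in [[\rho,y[[$ very close to $y$; the left-continuity hypothesis $m(y^-) = m(y)$ lets me arrange $m(y') - m(y)$ to be arbitrarily small. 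The sets $\tilde K_n = \bigcap_{j \leq n} K_j$ are then compact and decreasing, contained in $B_n$ up to a total loss of mass at most $\varepsilon$. If $\bigcap_n B_n = \emptyset$, then $\bigcap_n \tilde K_n = \emptyset$ too, and compactness forces $\tilde K_n = \emptyset$ for some finite $n$. This yields $\tilde\mu(B_n) \to 0$, and Carath\'eodory's extension theorem produces a Borel measure $\mu$ satisfying $\mu(\T_x) = m(x)$.

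\textbf{Atom formula.} For the final identity, decompose $\T_x \setminus \{x\}$ into its (finite or countable) connected components $(\T_i)_{i \in S}$, and for each $i \in S$ pick any sequence $z_{i,n} \in ]]x, x_i]]$ converging to $x$. Then $\T_{z_{i,n}}$ increases to $\T_i$, so continuity of $\mu$ from below gives $\mu(\T_i) = \lim_n m(z_{i,n})$, which exactly matches the contribution of the $i$-th branch to the additive right-limit of $m$ at $x$. Summing over $i \in S$ yields $\mu(\T_x \setminus \{x\}) = m(x^+)$, hence $\mu(\{x\}) = m(x) - m(x^+)$. As anticipated, the countable-additivity step is the only genuinely delicate one; the rest is standard measure theory adapted to the tree structure.
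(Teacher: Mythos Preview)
The paper does not supply its own proof of this lemma: it is quoted as \cite[Proposition 2.7]{Steph13} and used as a black box, so there is nothing in-paper to compare your argument against. Your Carath\'eodory-extension strategy is the natural one and is essentially sound, but two steps are not justified as written.

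\textbf{Non-negativity of the pre-measure.} You assert $\tilde\mu(B)=m(x)-\sum_{i=1}^n m(y_i)\ge 0$ ``since $m$ is decreasing''. Monotonicity alone only gives $m(y_i)\le m(x)$ for each $i$ individually; it says nothing about the sum (take $m\equiv 1$). What you actually need here is the hypothesis $m(v)\ge m(v^+)$, applied at every branch point $v$ of the finite subtree spanned by $x,y_1,\ldots,y_n$: if $c_1,\ldots,c_r$ are the children of $v$ in that reduced tree, they lie in distinct connected components of $\T_v\setminus\{v\}$, so $\sum_j m(c_j)\le m(v^+)\le m(v)$, and an induction from the leaves up to $x$ yields $\sum_i m(y_i)\le m(x)$. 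This is precisely where the ``additive right-limit'' hypothesis enters the existence proof; without it the content is not non-negative.

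\textbf{Compactness of the inner approximants.} The set $\T_x\setminus\bigcup_i \T_{y_i'}$ you build is not closed: already on $\T=[0,1]$ rooted at $0$ it is a half-open interval. The fix is immediate --- take its closure, which only adjoins the finitely many points $y_i'$, and observe that each $y_i'\in[[x,y_i[[$ lies outside every $\T_{y_j}$ (incomparability of the $y_j$), so the closure is still contained in $B_n$. With this adjustment your compactness-and-left-continuity argument for $\sigma$-additivity goes through.

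Both issues are readily patched and do not affect the overall architecture of the proof.
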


\smallskip

Note that the converse is also true (but elementary): for any finite measure $\mu$ on $\mathcal T$, the function $m$ defined by $m(x)=\mu(\T_x)$ satisfies $m(x^-)=m(x)\geq m(x^+)$ for all $x\in \T$.

\medskip
Our next result shows that this theory is compatible with the convergence of discrete trees. Return to the assumptions of Lemma \ref{lem:cvGHarbresdiscrets}: $T$ is a finite graph-theoretical tree and, for all $n$, we have a length functions $l_n$ on the set of edges. The sequence $(l_n(e))_{n\in\N}$ is assumed to converge to a strictly positive $l(e)$ for every edge $e$ and then, every tree being embedded in $\ell^1$ with the stick-breaking method, $T_{l_n}$ converges, in the Hausdorff sense for compact subsets of $\ell^1$, to $T_l$.

\begin{lemma}
\label{lem:cvGHParbresdiscrets} 
For $n\in\N$, let $\mu_n$ be a probability measure on $T_{l_n}$ with $m_n$ the corresponding decreasing function. Let $S$ be any dense subset of $T_l$, and assume that, for all $x\in S$, there exists a sequence $(x_n)_{n\in\N}$, such that
\begin{itemize}
\item[$\bullet$] $x_n\in T_{l_n}$ for all $n$, $x_n$ converges to $x$ as $n$ goes to infinity,
\item[$\bullet$] $(T_{l_n})_{x_n}$ converges to $(T_l)_x$ in the Hausdorff sense,
\item[$\bullet$] $m_n(x_n)$ converges to a number we call $f(x)$.
\end{itemize}
We then have
	\[(T_{l_n},\mu_n) \underset{n\to\infty}{\overset{\mathrm{GHP}}\longrightarrow} (T_l,\mu),
\]
where $\mu$ is the unique probability measure on $T_l$ such that, for all $x\in T_l$, $\mu((T_l)_x)=f(x^-)$, and $f(x^-)$ is defined as
\begin{equation}
\label{limgaucheS}
f(x^-)=\underset{\underset{y \in S\cap[[\rho,x[[} {y\to x}} \lim f(y),
\end{equation}
and $f(\rho^-)=1$.
More precisely, since we consider the versions of the trees embedded in $\ell^1$, we have Hausdorff convergence of the sets and Prokhorov convergence of the measures.
\end{lemma}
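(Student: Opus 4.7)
The plan is to first construct the candidate limit measure $\mu$ using Lemma \ref{lem:mesuresarbres}, and then to establish the Prokhorov convergence $\mu_n \to \mu$ in $\ell^1$ by a tightness plus uniqueness-of-subsequential-limit argument.

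For the construction, define $F : T_l \to [0,1]$ by $F(x) = f(x^-)$ as in (\ref{limgaucheS}), with $F(\rho) = 1$. I first observe that $f$ is automatically decreasing on $S$ for the tree order: for $y,z \in S$ with $y \in [[\rho,z]]$, Hausdorff convergence forces $(T_{l_n})_{z_n} \subset (T_{l_n})_{y_n}$ eventually, giving $m_n(z_n) \leq m_n(y_n)$ and hence $f(z) \leq f(y)$. This makes $F$ well defined, nonnegative, decreasing, and left-continuous on $T_l$. The only nontrivial hypothesis of Lemma \ref{lem:mesuresarbres} is the subadditivity $F^+(x) \leq F(x)$. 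To prove it, for each connected component $\T_i$ of $(T_l)_x \setminus \{x\}$ I choose by density a sequence $z_i^{(k)} \in S \cap \T_i$ with $z_i^{(k)} \to x$ and $f(z_i^{(k)}) \to F^+(x)_i$; for fixed $k$ the subtrees $(T_{l_n})_{z_{i,n}^{(k)}}$ are pairwise disjoint and all contained in $(T_{l_n})_{x_{(l),n}}$ for any $x_{(l)} \in S \cap [[\rho,x[[$ close to $x$ and $n$ large, so summing gives $\sum_i m_n(z_{i,n}^{(k)}) \leq m_n(x_{(l),n})$, and passing successively to the limits in $n$, $l$, and $k$ yields $\sum_i F^+(x)_i \leq F(x)$. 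Applying Lemma \ref{lem:mesuresarbres} then produces the desired probability measure $\mu$ with $\mu((T_l)_x) = F(x)$ for every $x \in T_l$.

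Turning to the Prokhorov convergence, the trees all lie in a fixed compact subset of $\ell^1$ (their diameters are uniformly bounded), so $(\mu_n)$ is tight and it suffices to show that every subsequential weak limit $\mu_\infty$ equals $\mu$. The lower bound $\mu_\infty((T_l)_x) \geq F(x)$ is obtained via a moving-set Portmanteau argument: for $x \in S$ and $\varepsilon > 0$, the Hausdorff convergence $(T_{l_n})_{x_n} \to (T_l)_x$ yields $(T_{l_n})_{x_n} \subset ((T_l)_x)^\varepsilon$ for $n$ large, so $f(x) = \lim m_n(x_n) \leq \mu_\infty(((T_l)_x)^\varepsilon)$; letting $\varepsilon \to 0$ gives $f(x) \leq \mu_\infty((T_l)_x)$, and approximating any $x \in T_l$ along $S \cap [[\rho,x[[$ extends this to $\mu_\infty((T_l)_x) \geq F(x)$ throughout. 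Taking $x = \rho$ also yields $\mu_\infty(T_l) = 1$.

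The hard part will be the reverse inequality $\mu_\infty((T_l)_x) \leq F(x)$, because $(T_l)_x$ has empty interior in $\ell^1$ and the open-set Portmanteau inequality is not directly usable. The strategy is to exploit the finite graph structure of $T$: by tightness and a diagonal subsequence one may assume that each vertex atom $\mu_n(\{v^{(n)}\})$ converges and that each restriction $\mu_n|_{]]v,w[[}$ to the interior of an edge converges weakly, after natural rescaling to $[0,l(e)]$, to some measure $\sigma^e$. Reassembling these pieces gives an explicit description of $\mu_\infty$ as a sum of vertex atoms plus the edge measures, from which one reads off $\mu_\infty((T_l)_x) = f(x)$ for any $x \in S$ lying in an edge interior at a continuity point of the corresponding $\sigma^e$. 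Such $x$ form a cocountable, hence dense, subset of $T_l$, and monotonicity and left-continuity of both $m_\infty(x) := \mu_\infty((T_l)_x)$ and $F$ (continuity from above of $\mu_\infty$ along decreasing subtrees, and by construction for $F$) propagate the identity $\mu_\infty((T_l)_x) = F(x)$ to every $x \in T_l$. The uniqueness clause of Lemma \ref{lem:mesuresarbres} then forces $\mu_\infty = \mu$, completing the Prokhorov convergence and hence the GHP convergence.
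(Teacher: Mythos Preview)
Your overall architecture is sound and parallels the paper's: extract subsequential weak limits by tightness and identify them through the values $\mu_\infty((T_l)_x)$. Your construction of $\mu$ via Lemma~\ref{lem:mesuresarbres} is a harmless reorganisation, and your lower bound $\mu_\infty((T_l)_x)\ge F(x)$ is exactly the paper's Portmanteau step.

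There is, however, a genuine gap in your upper bound. You claim that the set of $x\in S$ lying in an edge interior at a continuity point of $\sigma^e$ is cocountable in $T_l$. This is false in general: the continuity points are cocountable, but $S$ is only assumed dense, and in the very application you have in mind (Proposition~\ref{prop:cvmarginalesGHP}) $S=S_p$ is the \emph{countable} set of branch points, so $S\cap\{\text{continuity points}\}$ is certainly not cocountable and could a priori even miss entire segments. Your propagation by left-continuity then has nothing to propagate from. The edge-decomposition route can be salvaged: at an arbitrary continuity point $x$ (not in $S$) your reassembly gives $m_\infty(x)=\lim_n m_n(x'_n)$ for $x'_n$ at the corresponding relative position, and for any $y\in S\cap[[\rho,x[[$ monotonicity yields $\lim_n m_n(x'_n)\le f(y)$; letting $y\to x$ through $S$ gives $m_\infty(x)\le F(x)$, which combined with the lower bound gives equality on the genuinely cocountable set of continuity points.

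The paper sidesteps the whole edge decomposition. For the reverse inequality it stays with the same Portmanteau idea used for the lower bound, but applied from inside: given $x\in S$, pick $y^1,\dots,y^d\in S$, one in each connected component of $(T_l)_x\setminus\{x\}$, use the hypothesised Hausdorff convergences $(T_{l_n})_{y^i_n}\to (T_l)_{y^i}$ together with the fact that $\mu_n$ lives on $T_{l_n}$ to obtain $\mu_\infty\big(\bigcup_i (T_l)_{y^i}\big)\le f(x)$, and then let the $y^i\to x$ to conclude $\mu_\infty((T_l)_x\setminus\{x\})\le f(x)$. Sandwiching with your lower bound and taking the left limit along $S\cap[[\rho,x[[$ gives $m_\infty(x)=f(x^-)$ directly. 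This avoids both the reassembly lemma and any continuity-point bookkeeping, and does not require constructing $\mu$ in advance.
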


\begin{proof} Since $T_l$ is compact, $(\cup T_{l_n}) \cup T_l$ also is and Prokhorov's theorem ensures us that a subsequence of $(\mu_n)_{n\in\N}$ converges weakly. Without loss of generality, we can assume therefore that $(\mu_n)_{n\in\N}$ converges to a measure $\mu$ on $T_l$. We will show that $\mu$ must be as explicited in the statement of the lemma. This will be done by showing the following double inequality for all $x\in S$, which is inspired by the Portmanteau theorem,
\begin{equation}\label{double}
\mu\big((T_l)_x\setminus\{x\}\big)\leq f(x)\leq \mu\big((T_l)_x\big).
\end{equation}
We start by showing the right part of (\ref{double}): $f(x)\leq\mu((T_l)_x)$. Let $\varepsilon>0$, by Hausdorff convergence in $\ell^1$, for $n$ large enough, we have $(T_{l_n})_{x_n}\subset ((T_l)_x)^{\varepsilon}$, where $A^{\varepsilon}$ is the closed $\varepsilon$-enlargement of a set $A$. Since we also have $d_{\ell^1,\mathrm P}(\mu_{n},\mu)\leq \varepsilon$ for $n$ large enough, we obtain
	\[\mu_n\big((T_{l_n})_{x_n}\big)\leq \mu_n\Big(\big((T_l)_x\big)^{\varepsilon}\Big)\leq \mu\big(((T_l)_x)^{2\varepsilon}\big) + \varepsilon,
\] and making $n$ tend to infinity then gives us
	\[f(x)\leq \mu\Big(\big((T_l)_x\big)^{2\varepsilon}\Big) + \varepsilon.
\] Letting $\varepsilon$ tend to $0$ and using the fact that $(T_l)_x$ is closed gives us $f(x)\leq \mu\big((T_l)_x\big)$.

A similar, slightly more involved argument will show that $\mu\big((T_l)_x\setminus\{x\}\big) \leq f(x)$ for $x\in S$. Let $x\in S$ and let $d+1$ be its degree (there is nothing to say if $x$ is a leaf or the root). Let $T^1,\ldots,T^d$ be the connected components of $(T_l)_x\setminus\{x\}$ and let $y^1,\ldots,y^d$ be any points of $T^1,\ldots,T^d$ which also are in $S$. We give ourselves the corresponding sequences $(y^1_n)_{n\in\N},\ldots,(y^d_n)_{n\in\N}$. Take $\varepsilon>0$, we have, for $n$ large enough,
	\[\cup_{i=1}^d \big((T_l)_{y^i}\big) \subset \cup_{i=1}^d \big((T_{l_n})_{y^i_n}\big)^{\varepsilon},
\]
and therefore, using the Prokhorov convergence of measures, for possibly larger $n$,
\begin{align*}
\mu\Big(\cup_{i=1}^d \big((T_l)_{y^i}\big)\Big)&\leq \mu\Big(\cup_{i=1}^d \big((T_{l_n})_{y^i_n}\big)^{\varepsilon}\Big) \\
                    &\leq \mu_n \Big(\cup_{i=1}^d \big((\T_{l_n})_{y^i_n}\big)^{2\varepsilon}\Big)+\varepsilon.
\end{align*}
Since $\mu_n$ is supported on $\mathcal T_n$, if we take $2\varepsilon < \max_{1 \leq i \leq d} d(y^i,x)$, and $n$ large enough, we obtain
	\[\mu_n \Big(\cup_{i=1}^d \big((T_{l_n})_{y^i_n}\big)^{2\varepsilon}\Big)\leq \mu_n \big((T_{l_n})_{x_n}\big),
\]
thus giving us
	\[\mu\Big(\cup_{i=1}^d \big((T_l)_{y^i}\big)\Big) \leq \mu_n \big((\T_{l_n})_{x_n}\big) +\varepsilon.
\]
Letting $n$ tend to infinity and then $\varepsilon$ tend to $0$, we obtain
	\[\mu\Big(\cup_{i=1}^d \big((T_l)_{y^i}\big)\Big)\leq f(x),
\]
and finally we let all the $y^i$ tend to $x$, which makes the left-hand side tend to $\mu\big((T_l)_x\setminus\{x\}\big)$.

Having proved (\ref{double}), we only need to check that, calling $m$ the decreasing function associated to $\mu$, $m$ is equal to the left-limit of $f$ as defined in (\ref{limgaucheS}), which is immediate: let $x\in T_l\backslash\{\rho\}$ and evaluate (\ref{double}) at a point $y\in [[\rho,x[[\cap S$. By left-continuity of $m$, if we let $y$ tend to $x$, both the left and right members converge to $m(x)$, while the middle one converges to $f(x^-)$, which ends the proof.
\end{proof}

%%%%%%%%%%%%%%%%%%%%%%%%%%%%%%%%%%%%%%%%%%%%%%%%%%%%%%%%%%
\section{Trees, subtrees and projections}
\label{sec:A2}
%%%%%%%%%%%%%%%%%%%%%%%%%%%%%%%%%%%%%%%%%%%%%%%%%%%%%%%%%%
Let $(\T,d,\rho)$ be a compact and rooted $\R$-tree and $\T'$ be a compact and connected subset of $\T$ containing $\rho$. 
The boundary $\partial\T'$ of $\T'$ in $\T$ is then finite or countable. 
We recall that  $\T_x$ denotes the subtree of $\T$ rooted at $x$, $\forall x \in \mathcal T$, and similarly let $\mathcal T'_x$ denote the subtree of $\T'$ rooted at $x$, for $\ x \in \mathcal T'$. We then have
	\[\T=\T'\cup\bigcup_{x\in\partial\T'} \T_x
\]
with only the elements of $\partial\T'$ being counted multiple times in this union.

For $x\in\T$, there exists a highest ancestor of $x$ which is in $\T'$. We call it $\pi(x)$. The map $\pi$ is called the projection from $\T$ on $\T'$. For technical reason, we consider it as a map from $\T$ to $\T$, so that, for any measure $\mu$ on $\T$, $\pi_* \mu$ defines a measure on $\T$ (that only charges $\T'$).

\begin{lemma}
\label{lem:projmesure}
For any probability measure $\mu$ on $\T$, $\pi_*\mu$ is the unique probability measure $\nu$ on $\T'$ which satisfies
	\[\forall x\in \T', \quad \nu(\T'_x)=\mu(\T_x).
\]
\end{lemma}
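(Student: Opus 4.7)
The plan is to split the proof into an existence statement (that $\pi_* \mu$ does satisfy the relation) and a uniqueness statement (that any such measure must equal $\pi_* \mu$). The key geometric fact driving existence is that for $x\in \T'$, the preimage $\pi^{-1}(\T'_x)$ is exactly $\T_x$. This should be the only substantive computation; everything else is soft measure theory plus a direct appeal to Lemma \ref{lem:mesuresarbres}.

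For the existence half, I would first verify the set-theoretic identity $\pi^{-1}(\T'_x) = \T_x$ for any $x\in\T'$. If $y\in\T_x$, then $x\in[[\rho,y]]$; since $x\in\T'$ lies on the geodesic $[[\rho,y]]$ and $\pi(y)$ is by definition the point of $\T'$ farthest from $\rho$ on that geodesic, we have $x\in[[\rho,\pi(y)]]$, i.e.\ $\pi(y)\in\T'_x$. Conversely, if $\pi(y)\in\T'_x$, then $x\in[[\rho,\pi(y)]]\subset[[\rho,y]]$, so $y\in\T_x$. Applying this to $\pi_*\mu$ immediately gives $(\pi_*\mu)(\T'_x)=\mu(\T_x)$, so $\nu=\pi_*\mu$ fulfils the required identity.

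For uniqueness, let $\nu$ be any probability measure on $\T'$ satisfying $\nu(\T'_x)=\mu(\T_x)$ for every $x\in\T'$, and set $m(x)=\nu(\T'_x)$. The function $m$ is automatically decreasing along geodesics issued from $\rho$, and the remark following Lemma \ref{lem:mesuresarbres} tells us that, because $m$ comes from a (finite) measure, $m(x^-)=m(x)\geq m(x^+)$ holds at every $x\in\T'$ (left-continuity follows from the monotone intersection $\bigcap_n \T'_{z_n}=\T'_x$ along any sequence $z_n\to x$ with $z_n\in[[\rho,x[[$, which is elementary). Lemma \ref{lem:mesuresarbres} then asserts that there is at most one probability measure on $\T'$ assigning the prescribed masses $m(x)$ to the subtrees $(\T'_x)_{x\in\T'}$, and this forces $\nu=\pi_*\mu$.

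The only potentially delicate step is the geometric identity $\pi^{-1}(\T'_x)=\T_x$, and in particular making sure that the projection $\pi$ is well-defined as ``the highest ancestor in $\T'$'' — this uses that $\T'$ is closed and contains $\rho$, so the set of ancestors of $y$ lying in $\T'$ is a nonempty closed subset of the compact arc $[[\rho,y]]$ and therefore has a unique point of maximal height. Once that is granted, both inclusions in $\pi^{-1}(\T'_x)=\T_x$ are immediate from the fact that $\T'$ is connected through $\rho$ so any $\T'$-point on $[[\rho,y]]$ automatically lies below $\pi(y)$, and the rest of the argument is a routine invocation of Lemma \ref{lem:mesuresarbres}.
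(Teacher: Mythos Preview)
Your proposal is correct and follows essentially the same approach as the paper: establish the set-theoretic identity $\pi^{-1}(\T'_x)=\T_x$ to get existence, then invoke Lemma~\ref{lem:mesuresarbres} for uniqueness. The paper's proof is simply a two-line version of yours, stating the identity without justifying it and citing the lemma directly; your additional care in checking the inclusions and the well-definedness of $\pi$ is fine but not something the paper spells out.
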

\begin{proof} The fact that $\pi_*\mu$ satisfies the relation comes from the fact that, for all $x\in\T'$, we have $\T_x=\pi^{-1}(\T'_x)$. Uniqueness is a consequence of Lemma \ref{lem:mesuresarbres}.
\end{proof}

\begin{lemma}
\label{lem:projlip}
The map $\pi$ is $1$-Lipschitz whether one considers points of $\T$, the Hausdorff distance between compact subsets of $\T$ or the Prokhorov distance between probability measures on $\T$:
\begin{itemize}
\item $\forall x,y\in \T, d(\pi(x),\pi(y))\leq d(x,y),$ \\
\item for $A$ and $B$ non-empty compact subsets of $\T$, $d_{\T,\mathrm{H}}(\pi(A),\pi(B))\leq d_{\T,\mathrm{H}}(A,B),$ \\
\item  for any two probability measures $\mu$ and $\nu$ on $\T$, $d_{\T,\mathrm{P}}(\pi_*\mu,\pi_*\nu)\leq d_{\T,\mathrm{P}}(\mu,\nu)$.
\end{itemize}
\end{lemma}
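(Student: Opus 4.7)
The plan is to prove the three bullets in the order they are stated: the pointwise 1-Lipschitz bound on $\pi$, from which the Hausdorff and Prokhorov statements follow by short standard arguments.

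For the first bullet, I would fix $x, y \in \T$, let $z = x \wedge y$ be their branch point, and first make the preliminary observation that for every $u \in \T$ the set $[[\rho, u]] \cap \T'$ is a closed subsegment of $[[\rho, u]]$ containing $\rho$: this uses that $\T'$ is closed, connected, and contains $\rho$, together with the standard fact that closed connected subsets of an $\R$-tree are geodesically convex (if two points $a,b$ lie in $\T'$ then $[[a,b]] \subset \T'$). Consequently $\pi(u)$ is the endpoint of that subsegment farthest from $\rho$, and $[[\rho, \pi(u)]] \subset \T'$. I would then split into two cases. If $z \in \T'$, then $[[\rho, z]] \subset \T'$ by geodesic convexity, so $\pi(x) \in [[z, x]]$ and $\pi(y) \in [[z, y]]$; since $z$ lies on $[[\pi(x), \pi(y)]]$, we get
\[
d(\pi(x), \pi(y)) = d(\pi(x), z) + d(z, \pi(y)) \leq d(x, z) + d(z, y) = d(x, y).
\]
If $z \notin \T'$, then since $\T'$ is closed the subsegments $[[\rho, x]] \cap \T'$ and $[[\rho, y]] \cap \T'$ are strictly inside $[[\rho, z]]$, and since $[[\rho, x]] \cap [[\rho, z]] = [[\rho, y]] \cap [[\rho, z]] = [[\rho, z]]$, one has
\[
[[\rho, \pi(x)]] = [[\rho, x]] \cap \T' = [[\rho, z]] \cap \T' = [[\rho, y]] \cap \T' = [[\rho, \pi(y)]],
\]
so $\pi(x) = \pi(y)$ and the inequality is trivial.

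The Hausdorff bound is then immediate: for any $\varepsilon > d_{\T,\mathrm{H}}(A, B)$ and $a \in A$, picking $b \in B$ with $d(a, b) \leq \varepsilon$ gives $d(\pi(a), \pi(b)) \leq \varepsilon$, hence $\pi(A) \subset \pi(B)^\varepsilon$; the symmetric inclusion is identical. For the Prokhorov bound, I would first check the set inclusion $(\pi^{-1}(A))^\varepsilon \subset \pi^{-1}(A^\varepsilon)$ for every Borel $A$, which again follows directly from the 1-Lipschitz property of $\pi$ (if $d(x,y) \leq \varepsilon$ and $\pi(x) \in A$, then $\pi(y)$ is within $\varepsilon$ of $\pi(x)$, hence in $A^\varepsilon$). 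Then, given $\varepsilon > d_{\T,\mathrm{P}}(\mu, \nu)$,
\[
\pi_*\mu(A) = \mu(\pi^{-1}(A)) \leq \nu\big((\pi^{-1}(A))^\varepsilon\big) + \varepsilon \leq \nu(\pi^{-1}(A^\varepsilon)) + \varepsilon = \pi_*\nu(A^\varepsilon) + \varepsilon,
\]
and symmetrically, so $d_{\T,\mathrm{P}}(\pi_*\mu, \pi_*\nu) \leq d_{\T,\mathrm{P}}(\mu, \nu)$.

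There is no real obstacle beyond the case analysis in the first bullet; the only point requiring care is the preliminary observation that $[[\rho, u]] \cap \T'$ is a single subsegment of $[[\rho, u]]$, which is where the connectedness of $\T'$ enters in an essential way, and after that the bounds on the Hausdorff and Prokhorov distances are obtained mechanically from the pointwise 1-Lipschitz property.
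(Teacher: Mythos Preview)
Your proof is correct and essentially matches the paper's argument. The only minor structural difference is in the first bullet: the paper first reduces to the ``ancestor'' case $x\in[[\rho,y]]$ (treating the three subcases both in $\T'$, neither in $\T'$, one in and one out) and then handles general $x,y$ by splitting at $z=x\wedge y$, whereas you go straight to the branch point $z$ and split on $z\in\T'$ versus $z\notin\T'$; the Hausdorff and Prokhorov parts are identical to the paper's.
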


\begin{proof} Let $x$ and $y$ be elements of $\T$. Assume first that $x\in [[\rho,y]]$. If both of them are in $\T'$ then $\pi(x)=x$ and $\pi(y)=y$, while if they are both not in $\T'$, then $\pi(x)=\pi(y)$. If $x$ is in $\T'$ but $y$ is not, then $\pi(y)\in [[x,y]]$. In all these three cases, we have $d(\pi(x),\pi(y))\leq d(x,y)$. By symmetry we also have the case where $y\in [[\rho,x]]$. Last, when neither $x \in [[\rho,y]]$ nor $y \in [[\rho,x]]$, one just needs to consider $z=x\wedge y$, use the fact that $d(x,y)=d(x,z)+d(y,z)$ and use the previous argument twice.

Let $A$ and $B$ be compact subsets of $\T$ and let $\varepsilon$ such that $A\subset B^{\varepsilon}=\{x\in\T, \exists b\in B, d(x,b)\leq \varepsilon\}$. Let $x\in\pi(A)$ and $a\in A$ such that $x=\pi(a)$ and then let $b\in B$ such that $d(a,b)\leq \varepsilon$. We then have $d(x,\pi(b))\leq\varepsilon$ and thus $\pi(A)\subset \pi(B)^{\varepsilon}$. Reversing the roles of $A$ and $B$ then shows that $d_{\mathcal T,\mathrm H}(\pi(A),\pi(B))\leq d_{\mathcal T,\mathrm H}(A,B)$.

Let $\mu$ and $\nu$ be two probability measures on $\T$ and let $\varepsilon$ such that $d_P(\mu,\nu)\leq\varepsilon$. Let $A$ be a measurable subset of $\T$, we then have $\pi_*\mu(A)=\mu(\pi^{-1}(A))\leq \nu((\pi^{-1}(A))^{\varepsilon})+\varepsilon.$ We also have $(\pi^{-1}(A))^{\varepsilon}\subset\pi^{-1}(A^{\varepsilon})$ and thus $\pi_*\mu(A)\leq\pi_*\nu(A)+\varepsilon$. Reversing the roles of $\mu$ and $\nu$ yields $d_{\mathcal T,\mathrm P}(\pi_*\mu,\pi_*\nu)\leq \varepsilon$.
\end{proof}

Let $Z_{\pi}=\underset{x\in\T}\sup \;d(x,\pi(x))$. This quantity controls all of the difference between $\T$ and $\T'$, even when measured:
 
\begin{lemma}
\label{lem:projcontrol}
We have
	\[Z_{\pi}=\underset{x\in\partial\T'}\sup ht(\T_x),
\]
where $ht(\T_x)=\sup_{y \in \mathcal T_x}d(x,y)$, and
	\[d_{\T,\mathrm{H}}(\T,\T')=Z_{\pi}\]
and, for any measure $\mu$ on $\T$,
	\[d_{\T,\mathrm{P}}(\mu,\pi_*\mu)\leq d_{\T,\mathrm{H}}(\T,\T').
\]
\end{lemma}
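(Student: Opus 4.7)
The lemma decomposes into three claims, which I would prove in order, each building on the previous.

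For the identity $Z_\pi = \sup_{x \in \partial \T'} ht(\T_x)$, the plan is to exploit the stated decomposition $\T = \T' \cup \bigcup_{y \in \partial \T'} \T_y$. If $x \in \T'$, then $\pi(x) = x$ contributes nothing to $Z_\pi$. If $x \notin \T'$, then walking along $[[\rho, x]]$ from the root, $\pi(x)$ is by definition the last point encountered in $\T'$; hence $\pi(x) \in \partial \T'$ and $x \in \T_{\pi(x)}$, giving $d(x, \pi(x)) \leq ht(\T_{\pi(x)})$ and therefore $Z_\pi \leq \sup_{y \in \partial \T'} ht(\T_y)$. Conversely, for each $y \in \partial \T'$, any $x \in \T_y \setminus \{y\}$ lying outside $\T'$ satisfies $\pi(x) = y$; taking supremum over such $x$ (and using that the overlap between $\T_y$ and $\T'$ reduces essentially to $\{y\}$ per the stated decomposition) yields $ht(\T_y) \leq Z_\pi$.

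For $d_{\T, \mathrm{H}}(\T, \T') = Z_\pi$, since $\T' \subset \T$ the Hausdorff distance reduces to $\sup_{x \in \T} d(x, \T')$. The key observation is that $d(x, \T') = d(x, \pi(x))$: in an $\R$-tree, the unique geodesic from $x$ to any $y \in \T'$ must pass through $\pi(x)$ (otherwise one would obtain two distinct injective paths from $x$ into $\T'$, contradicting unique geodesics), so $d(x, y) = d(x, \pi(x)) + d(\pi(x), y) \geq d(x, \pi(x))$, with equality at $y = \pi(x)$. Taking supremum over $x$ then gives the claimed equality.

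For the Prokhorov bound, set $\varepsilon = Z_\pi$. Two set inclusions follow immediately from the definition of $\pi$: first, $\pi^{-1}(A) \subset A^\varepsilon$ for any Borel $A \subset \T$ (if $\pi(x) \in A$ then $d(x, A) \leq d(x,\pi(x)) \leq \varepsilon$), and second, $A \subset \pi^{-1}(A^\varepsilon)$ (if $x \in A$ then $\pi(x) \in A^\varepsilon$ since $d(\pi(x),x)\leq\varepsilon$). These yield $\pi_* \mu(A) = \mu(\pi^{-1}(A)) \leq \mu(A^\varepsilon)$ and $\mu(A) \leq \mu(\pi^{-1}(A^\varepsilon)) = \pi_* \mu(A^\varepsilon)$, giving $d_{\T, \mathrm{P}}(\mu, \pi_* \mu) \leq \varepsilon = d_{\T, \mathrm{H}}(\T, \T')$.

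The main delicate point is the reverse inequality in the first identity, which relies on the structural fact that each subtree $\T_y$ with $y \in \partial \T'$ sits essentially outside $\T'$ except at $y$ (the precise form of the decomposition at the start of the appendix); the second and third claims, by contrast, are routine manipulations with the $1$-Lipschitz projection map already established in Lemma \ref{lem:projlip}.
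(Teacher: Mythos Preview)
Your proof is correct and follows essentially the same approach as the paper's, just with more detail: the paper dismisses the first two identities as ``direct'' and ``straightforward'' consequences of the observation that $\pi(x)=x$ for $x\in\T'$ while $x\in\T_{\pi(x)}$ otherwise, and handles the Prokhorov bound via the same two set inclusions $\pi^{-1}(A)\subset A^{Z_\pi}$ and (in the form $A\subset\pi^{-1}(\pi(A))$, $\pi(A)\subset A^{Z_\pi}$) $A\subset\pi^{-1}(A^{Z_\pi})$. One small remark: your closing comment that the second and third claims rely on Lemma~\ref{lem:projlip} is not quite accurate---neither your argument nor the paper's actually invokes the $1$-Lipschitz property there.
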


\begin{proof} The first point is a direct consequence from the fact that, if $x\in\T'$ then $\pi(x)=x$, while if $x\in\T\setminus\T'$, $x\in\T_{\pi(x)}$. The second point is also a fairly straightforward consequence of the definition of $Z_{\pi}$.
The third point involves simple manipulations of the Prokhorov metric. Let $A$ be a subset of $\T$. Since $A\subset \pi^{-1}(\pi(A))$ and $\pi(A)\subset A^{Z_{\pi}}$, we automatically have $\mu(A)\leq\pi_*\mu(\pi(A))\leq\pi_*\mu(A^{Z_{\pi}})$. On the other hand, we have $\pi^{-1}(A)\subset A^{Z_{\pi}}$, which implies $\pi_*\mu(A)\leq \mu(A^{Z_{\pi}})$.
\end{proof}

\section*{Acknowledgments} We would like thank Nicolas Curien for a stimulating discussion on random trees built recursively, which was the starting point of this work.

\bibliographystyle{siam}
\bibliography{frag}

\begin{thebibliography}{10}

\bibitem{ADH}
{\sc R.~Abraham, J.-F. Delmas, and P.~Hoscheit}, {\em A note on the
  {G}romov-{H}ausdorff-{P}rokhorov distance between (locally) compact metric
  measure spaces}, Electron. J. Probab., 18(14) (2013), pp.~1--21.

\bibitem{Ald91}
{\sc D.~Aldous}, {\em The continuum random tree. {II}. {A}n overview}, in
  Stochastic analysis ({D}urham, 1990), vol.~167 of London Math. Soc. Lecture
  Note Ser., Cambridge Univ. Press, Cambridge, 1991, pp.~23--70.

\bibitem{Ald93}
{\sc D.~Aldous}, {\em The continuum random tree {III}}, Ann. Probab., 21
  (1993), pp.~248--289.

\bibitem{Art64}
{\sc E.~Artin}, {\em The gamma function}, Translated by Michael Butler. Athena
  Series: Selected Topics in Mathematics, Holt, Rinehart and Winston, New York,
  1964.

\bibitem{BertoinHomogeneous}
{\sc J.~Bertoin}, {\em Homogeneous fragmentation processes}, Probab. Theory
  Relat. Field., 121 (2001), pp.~301--318.

\bibitem{BertoinSSF}
\leavevmode\vrule height 2pt depth -1.6pt width 23pt, {\em Self-similar
  fragmentations}, Ann. Inst. H. Poincar\'e Probab. Statist., 38 (2002),
  pp.~319--340.

\bibitem{BertoinBook}
\leavevmode\vrule height 2pt depth -1.6pt width 23pt, {\em Random fragmentation
  and coagulation processes}, vol.~102 of Cambridge Studies in Advanced
  Mathematics, Cambridge University Press, Cambridge, 2006.

\bibitem{Bha07}
{\sc S.~Bhamidi}, {\em Universal techniques to analyze preferential attachment
  trees: Global and local analysis}, 2007.
\newblock Pr\'epublication.

\bibitem{CFW09}
{\sc B.~Chen, D.~Ford, and M.~Winkel}, {\em A new family of {M}arkov branching
  trees: the alpha-gamma model}, Electron. J. Probab., 14 (2009), pp.~no. 15,
  400--430.

\bibitem{CH13}
{\sc N.~Curien and B.~Haas}, {\em The stable trees are nested}, Probab. Theory
  Related Fields, 157(1) (2013), pp.~847--883.

\bibitem{DLG02}
{\sc T.~Duquesne and J.-F. Le~Gall}, {\em Random trees, {L}\'evy processes and
  spatial branching processes}, Ast\'erisque,  (2002), pp.~vi+147.

\bibitem{DLG05}
\leavevmode\vrule height 2pt depth -1.6pt width 23pt, {\em Probabilistic and
  fractal aspects of {L}\'evy trees}, Probab. Theory Related Fields, 131
  (2005), pp.~553--603.

\bibitem{EPW06}
{\sc S.~Evans, J.~Pitman, and A.~Winter}, {\em Rayleigh processes, real trees,
  and root growth with re-grafting.}, Probab. Theory Related Fields, 134(1)
  (2006), pp.~918--961.

\bibitem{Eva08}
{\sc S.~N. Evans}, {\em Probability and real trees}, vol.~1920 of Lecture Notes
  in Mathematics, Springer, Berlin, 2008.
\newblock Lectures from the 35th Summer School on Probability Theory held in
  Saint-Flour, July 6--23, 2005.

\bibitem{Ford05}
{\sc D.~Ford}, {\em Probabilities on cladograms: introduction to the alpha
  model}.
\newblock Prépublication -- arXiv:math/0511246.

\bibitem{HM04}
{\sc B.~Haas and G.~Miermont}, {\em The genealogy of self-similar
  fragmentations with negative index as a continuum random tree}, Electron. J.
  Probab., 9 (2004), pp.~no. 4, 57--97 (electronic).

\bibitem{HM11}
{\sc B.~Haas and G.~Miermont}, {\em Self-similar scaling limits of
  non-increasing {M}arkov chains}, Bernoulli Journal, 17 (2011),
  pp.~1217--1247.

\bibitem{HM12}
{\sc B.~Haas and G.~Miermont}, {\em Scaling limits of {M}arkov branching trees
  with applications to {G}alton-{W}atson and random unordered trees}, Ann.
  Probab., 40 (2012), pp.~2589--2666.

\bibitem{HMPW08}
{\sc B.~Haas, G.~Miermont, J.~Pitman, and M.~Winkel}, {\em Continuum tree
  asymptotics of discrete fragmentations and applications to phylogenetic
  models}, Ann. Probab., 36 (2008), pp.~1790--1837.

\bibitem{HPW09}
{\sc B.~Haas, J.~Pitman, and M.~Winkel}, {\em Spinal partitions and invariance
  under re-rooting of continuum random trees}, Ann. Probab., 37 (2009),
  pp.~1381--1411.

\bibitem{Kallenberg}
{\sc O.~Kallenberg}, {\em Foundations of Modern Probability}, Applied
  probability, Springer, 2002.

\bibitem{LG06}
{\sc J.-F. Le~Gall}, {\em Random real trees}, Ann. Fac. Sci. Toulouse Math.
  (6), 15 (2006), pp.~35--62.

\bibitem{LGLJ98}
{\sc J.-F. Le~Gall and Y.~Le~Jan}, {\em Branching processes in {L}\'evy
  processes: the exploration process}, Ann. Probab., 26 (1998), pp.~213--252.

\bibitem{Mar03}
{\sc P.~Marchal}, {\em Constructing a sequence of random walks strongly
  converging to {B}rownian motion}, in Discrete random walks ({P}aris, 2003),
  Discrete Math. Theor. Comput. Sci. Proc., AC, Assoc. Discrete Math. Theor.
  Comput. Sci., Nancy, 2003, pp.~181--190 (electronic).

\bibitem{Mar06}
\leavevmode\vrule height 2pt depth -1.6pt width 23pt, {\em A note on the
  fragmentation of a stable tree}, in Fifth {C}olloquium on {M}athematics and
  {C}omputer {S}cience, Discrete Math. Theor. Comput. Sci. Proc., AI, Assoc.
  Discrete Math. Theor. Comput. Sci., Nancy, 2008, pp.~489--499.

\bibitem{PitmanStFl}
{\sc J.~Pitman}, {\em Combinatorial stochastic processes}, vol.~1875 of Lecture
  Notes in Mathematics, Springer-Verlag, Berlin, 2006.
\newblock Lectures from the 32nd Summer School on Probability Theory held in
  Saint-Flour, July 7--24, 2002, With a foreword by Jean Picard.

\bibitem{PitmanYor}
{\sc J.~Pitman and M.~Yor}, {\em The two-parameter {P}oisson-{D}irichlet
  distribution derived from a stable subordinator}, Ann. Probab., 25 (1997),
  pp.~855--900.

\bibitem{Rem85}
{\sc J.-L. R{\'e}my}, {\em Un proc\'ed\'e it\'eratif de d\'enombrement d'arbres
  binaires et son application \`a leur g\'en\'eration al\'eatoire}, RAIRO
  Inform. Th\'eor., 19 (1985), pp.~179--195.

\bibitem{RTV07}
{\sc A.~Rudas, B.~T{\'o}th, and B.~Valk{\'o}}, {\em Random trees and general
  branching processes}, Random Structures Algorithms, 31 (2007), pp.~186--202.

\bibitem{SmyMah94}
{\sc R.~T. Smythe and H.~M. Mahmoud}, {\em A survey of recursive trees}, Teor.
  \u Imov\=\i r. Mat. Stat.,  (1994), pp.~1--29.

\bibitem{Steph13}
{\sc R.~Stephenson}, {\em General fragmentation trees}, Electron. J. Probab.,
  18(101) (2013), pp.~1--45.

\bibitem{these}
\leavevmode\vrule height 2pt depth -1.6pt width 23pt, PhD thesis, Universit\'e
  {P}aris-{D}auphine, to appear 2014.

\end{thebibliography}

\end{document}